\documentclass[11pt]{article}
\usepackage{amsmath,amsthm,amssymb,hyperref, color,fullpage,bm}
\usepackage{blkarray}

\newcommand{\remove}[1]{}

\makeatletter
\newtheorem*{rep@theorem}{\rep@title}
\newcommand{\newreptheorem}[2]{%
\newenvironment{rep#1}[1]{%
 \def\rep@title{#2 \ref{##1}}%
 \begin{rep@theorem}}%
 {\end{rep@theorem}}}
\makeatother

\newtheorem{thm}{Theorem}[section]
\newreptheorem{thm}{Theorem}
\newtheorem{claim}[thm]{Claim}
\newtheorem{lem}[thm]{Lemma}
\newreptheorem{lem}{Lemma}
\newtheorem{define}[thm]{Definition}
\newtheorem{cor}[thm]{Corollary}

\newtheorem{conjecture}[thm]{Conjecture}
\newreptheorem{conjecture}{Conjecture}

\newtheorem{fact}[thm]{Fact}

\hypersetup{
	colorlinks=true,
	linkcolor=blue,%
	citecolor=blue,
	linktoc=page
}
\newcommand\numberthis{\addtocounter{equation}{1}\tag{\theequation}}

\def\GL{\textsf{GL}}
\def\F{{\mathbb{F}}}
\def\E{{\mathbb{E}}}

\def\Q{{\mathbb{Q}}}
\def\Z{{\mathbb{Z}}}
\def\N{{\mathbb{N}}}

\def\C{{\mathbb{C}}}

\def\P{{\mathbb{P}}}

\def\Ind{{\mathbf{1}}}

\def\bj{{\mathbf j}}
\def\bi{{\mathbf i}}
\def\balpha{{\bm \alpha}}
\def\bzero{{\bm 0}}

\def\E{{\mathbb E}}
\def\mweight{\textsf{mweight}}

\def\rank{\textsf{rank}}

\def\EVAL{\textsf{EVAL}}

\def\Coeff{\textsf{Coeff}}

\newcommand{\eps}{\epsilon}

%%%%%%%%%%%%TIKZ PACKAGE FOR FIGURES%%%%%%%%%%%%%%
\usepackage{tikz-cd}
% \usepackage{pgf,tikz,pgfplots}
% \pgfplotsset{compat=1.15}
% \usepackage{mathrsfs}
% \usetikzlibrary{arrows}
%%%%%%%%%%%%%%%%%%%%%%%%%%%%%%%%%%%%%%%%%%%%%%%%%%%

\begin{document}

\title{Maximal and $(m,\epsilon)$-Kakeya bounds over $\Z/N\Z$ for general $N$}
 \author{Manik Dhar\thanks{Department of Computer Science, Princeton University. Email: \texttt{manikd@princeton.edu}. }}

\date{}
\maketitle

\begin{abstract}
We derive Maximal Kakeya estimates for functions over $\Z/N\Z$ proving the Maximal Kakeya conjecture for $\Z/N\Z$ for general $N$ as stated by Hickman and Wright~\cite{hickman2018fourier}. The proof involves using polynomial method and linear algebra techniques from \cite{DharGeneral,arsovski2021padic,dhar2021proof} and generalizing a probabilistic method argument from \cite{dhar2022linear}. As another application we give lower bounds for the size of $(m,\eps)$-Kakeya sets over $\Z/N\Z$. Using these ideas we also give a new, simpler, and direct proof for Maximal Kakeya bounds over finite fields (which were first proven in \cite{ellenberg2010kakeya}) with almost sharp constants. 
\end{abstract}

\section{Introduction}\label{sec:intro}
Throughout this paper $[r]=\{1,\hdots,r\}$. 

We first begin with describing a special case of our results for sets as opposed to functions in general. Given a set $S\subseteq (\Z/N\Z)^n$ we can determine the largest size of intersection a line in a given direction can have with $S$. We want to lower bound the size of $S$ by using this data for every direction. We first define the set of possible directions a line can take in $(\Z/N\Z)^n$.

\begin{define}[Projective space $\P (\Z/N\Z)^{n-1}$]
For $N=p_1^{k_1}\hdots p_r^{k_r}$ where $p_1,\hdots,p_r$ are distinct primes. The Projective space $\P (\Z/N\Z)^{n-1}$ consists of vectors $u\in (\Z/N\Z)^n$ upto unit \em{($(\Z/N\Z)^{\times}$)} multiples of each other such that {\em $u$ $($mod $p_i^{k_i})$} has at least one unit co-ordinate for every $i\in [r]$.
\end{define}

For each direction in $\P (\Z/N\Z)^{n-1}$ we pick a representative in $(\Z/N\Z)^n$. This allows us to treat $\P (\Z/N\Z)^{n-1}$ as a subset of $(\Z/N\Z)^n$. We note then a line $L$ in direction $u\in \P (\Z/N\Z)^{n-1}$ is going to be a set of the form $\{a+tu|t\in \Z/N\Z\}$ for some $a\in \Z/N\Z$.  We can now state our set size lower bound.

\begin{thm}[Set size lower bounds from maximal intersection sizes over $\Z/N\Z$]\label{thm-maxSet}
Say we are given $S\subseteq (\Z/N\Z)^n$ where $N=p_1^{k_1}\hdots p_r^{k_r}$. We then have the following lower bound,
\begin{align*}
    |S|&\ge C_{N,n}\underset{u \in \P (\Z/N\Z)^{n-1}}{\E}\left[\sup\limits_{a\in (\Z/N\Z)^n}|\{a+tu|t\in \Z/N\Z\} \cap S|^n\right]\\
    &=C_{N,n}|\P (\Z/N\Z)^{n-1}|^{-1}\sum\limits_{u\in \P (\Z/N\Z)^{n-1}} \sup\limits_{a\in (\Z/N\Z)^n} |\{a+tu|t\in \Z/N\Z\} \cap S|^n,
\end{align*}
where 
$$C_{N,n}=\left(\prod_{j=1}^{r-1}\frac{1}{k_j\log(p_j)+1}\prod\limits_{i=1}^r\frac{1}{2(k_i+\lceil\log_{p_i}(n)\rceil)}\right)^n.$$
\end{thm}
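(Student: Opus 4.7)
The plan is to deduce this set-version bound as a special case of a stronger maximal Kakeya estimate for functions on $(\Z/N\Z)^n$, applied to the indicator $\mathbf{1}_S$. The proof of that function-level estimate would combine a polynomial/coefficient method over $\Z/p^k\Z$ with a probabilistic averaging step that extracts the $n$-th power.

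A natural first step is to factor the problem across prime powers via CRT. Since $\Z/N\Z \cong \prod_i \Z/p_i^{k_i}\Z$, the ambient space, the projective space of directions (with its condition that each prime-power reduction has a unit coordinate), and the set of lines all decompose as products over the prime factors. The product form of $C_{N,n}$, in particular the factor $\prod_i \frac{1}{2(k_i+\lceil \log_{p_i} n\rceil)}$, is consistent with proving an analogous estimate at each prime power $\Z/p_i^{k_i}\Z$ and then multiplying; the additional factor $\prod_{j=1}^{r-1}\frac{1}{k_j \log p_j + 1}$ is naturally what one picks up as an averaging/tensoring loss when gluing the prime-power estimates back together.

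For the prime-power case $\Z/p^k\Z$, I would use the polynomial/coefficient method in the style of \cite{DharGeneral,dhar2021proof,arsovski2021padic}. Concretely, one works with a suitable module of polynomial-type functions on $(\Z/p^k\Z)^n$ over $\F_p$ and, given $S$, shows that a certain evaluation-type map from this module into $\F_p^S$ (or a closely related space) is injective in order to obtain $|S| \gtrsim \dim(\text{module})$. The module is chosen so that its dimension captures the relevant line-intersection data. The $\lceil \log_p n\rceil$ factor in the constant is characteristic of such arguments, arising when translating ``vanishing to order $m_u$ along a line in direction $u$'' over $\Z/p^k\Z$ into polynomial constraints via $p$-adic valuation bookkeeping, with each unit of $k$ and each $\log_p n$ contributing one slot.

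The key new ingredient, and the step I expect to be the main obstacle, is extracting the full $n$-th power $\sup_a |\{a+tu : t\in \Z/N\Z\}\cap S|^n$ on averaging over $u$, rather than a single factor of $m_u$. Here I would generalize the probabilistic argument of \cite{dhar2022linear}: set up a distribution on ordered $n$-tuples of (direction, point) pairs, or on configurations built out of $n$ copies of the best line in each direction, and apply a Jensen/Hölder/double-counting inequality so that the $n$-th power appears naturally when one relates this distribution to a uniform distribution on $S$. Making this probabilistic amplification compatible with the polynomial-method dimension count, while tracking the explicit constants through the CRT reduction, is where most of the technical care will be needed in order to land on exactly $C_{N,n}$ as stated.
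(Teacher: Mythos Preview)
Your high-level reduction is exactly what the paper does: apply the functional maximal estimate (Theorem~\ref{thm-maxN}) to $f=\Ind_S$, noting that $\mweight(\Ind_S,p_1)\le p_1^{k_1}$, so the constant there specializes to the stated $C_{N,n}$.

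However, your sketch of the functional estimate has a genuine gap. You propose to factor across prime powers via CRT, prove the bound over each $\Z/p_i^{k_i}\Z$ separately, and then multiply. This does not work: $f$ (equivalently $S$) need not be a product across the CRT factors, so there is no independent prime-power problem to solve and nothing to ``multiply.'' The paper instead argues by \emph{induction} on the number of prime factors. At the inductive step one peels off a single prime $p_0$, writes each maximal line as $L(u_0,u_1)=L_0\times L_1$, and records the slice weights $g_{L}(y)=\sum_{x\in L_0} f(x,y)$ for $y\in L_1$. The complex-torus polynomial method over $\Z/p_0^{k_0}\Z$ is run with multiplicities governed by $g_L$, and the \emph{set-version} bound over $\Z/N_1\Z$ (that is, this very theorem for $r-1$ primes) is applied to the sets $S(u_0,i)=\bigcup_{u_1}\{y\in L_1:g_L(y)\ge i\}$. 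So the set and function statements feed into each other inductively; there is no tensor product of prime-power bounds, and the extra factor $\prod_{j=1}^{r-1}(k_j\log p_j+1)^{-1}$ comes from a dyadic pigeonhole (Claim~\ref{claim-srint}) at each inductive step, not a generic ``gluing loss.''

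Your account of where the $n$-th power comes from is also off. It is not a H\"older/Jensen amplification on $n$-tuples of configurations; it comes directly from the dimension count $\binom{\ell\lceil\log_p\ell\rceil^{-1}+n}{n}$ in the rank lower bound for the Arsovski-type matrix (Lemma~\ref{lem:1rank}, Corollary~\ref{cor:splitM}). The probabilistic step is a single random rotation $G\in\GL_n(\Z/p_0^{k_0}\Z)$, and its only role is to make each fixed row of $\Coeff(M^j_{mw,n})$ appear in the constructed matrix with probability equal to the density of relevant directions; linearity of expectation then converts the rank bound into an average over $u$. No $n$-fold amplification occurs at that step.
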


Note, $\sup_{a\in (\Z/N\Z)^n} |\{a+tu|t\in \Z/N\Z\} \cap S|$ is the maximal intersection a line in direction $u$ can have with $S$. The above bound gives us new bounds for the Kakeya problem over $\Z/N\Z$ and generalizes them. To see that let us first define Kakeya sets.

\begin{define}[$(m,\eps)$-Kakeya sets]
Let $N,n\in \N$. A subset $S\subseteq (\Z/N\Z)^n$ is said to be $(m,\eps)$-Kakeya when for at least an $\eps$ fraction of directions $u\in \P (\Z/N\Z)^{n-1}$ there exists a line $L_u=\{a+tu|t\in \Z/N\Z\}$ in direction $u$ such that $|L_u\cap S|\ge m$.
\end{define}

We note $(N,1)$-Kakeya sets are often just referred to as Kakeya sets. The following is an immediate corollary of Theorem~\ref{thm-maxSet}.

\begin{cor}[$(m,\eps)$-Kakeya bounds over $\Z/N\Z$ for general $N$]\label{cor-setbds}
Say we are given an $(m,\eps)$-Kakeya set $S\subseteq (\Z/N\Z)^n$ where $N=p_1^{k_1}\hdots p_r^{k_r}$ and $p_1>\hdots>p_r$. We then have the following lower bound.
$$|S|\ge  C_{N,n}\eps  m^n,$$
where
$$C_{N,n}=\left(\prod_{j=1}^{r-1}\frac{1}{k_j\log(p_j)+1}\prod\limits_{i=1}^r\frac{1}{2(k_i+\lceil\log_{p_i}(n)\rceil)}\right)^n.$$
\end{cor}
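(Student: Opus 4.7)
The plan is to deduce the bound from Theorem~\ref{thm-maxSet} by a one-line estimate of the expectation on its right-hand side using the $(m,\eps)$-Kakeya hypothesis. Theorem~\ref{thm-maxSet} already provides a lower bound on $|S|$ in terms of the average $n$-th power of maximal line intersections over all directions, so all that remains is to translate the Kakeya condition into a lower bound on this average.

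First I would invoke the definition of $(m,\eps)$-Kakeya to extract a subset $U \subseteq \P(\Z/N\Z)^{n-1}$ of directions with $|U| \ge \eps\,|\P(\Z/N\Z)^{n-1}|$, each equipped with a shift $a_u \in (\Z/N\Z)^n$ satisfying $|\{a_u + tu : t \in \Z/N\Z\} \cap S| \ge m$. For every $u \in U$, this immediately forces $\sup_{a} |\{a+tu : t \in \Z/N\Z\} \cap S|^n \ge m^n$, while for $u \notin U$ the $n$-th power of the supremum is at least $0$ because the intersection size is a nonnegative integer.

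Dropping the nonnegative contribution from $u \notin U$ then gives
\[
\E_{u \in \P(\Z/N\Z)^{n-1}}\!\left[\sup_{a \in (\Z/N\Z)^n} |\{a+tu : t \in \Z/N\Z\} \cap S|^n\right] \;\ge\; \frac{|U|}{|\P(\Z/N\Z)^{n-1}|}\cdot m^n \;\ge\; \eps\,m^n,
\]
and plugging this estimate into Theorem~\ref{thm-maxSet} produces exactly $|S| \ge C_{N,n}\,\eps\, m^n$, which is the claimed bound with the identical constant $C_{N,n}$.

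Since the paper explicitly flags this as an immediate corollary, I do not anticipate any real obstacle. All of the technical heavy lifting—the polynomial and linear-algebraic machinery together with the probabilistic argument underlying the averaged maximal inequality—lives in the proof of Theorem~\ref{thm-maxSet}, not here. The only subtlety worth checking is that no additional constant loss is incurred in the averaging step, which is the case because the inequality $\sup_a |L_u \cap S|^n \ge m^n$ on $U$ and nonnegativity elsewhere are both sharp enough to preserve the factor $\eps m^n$ exactly.
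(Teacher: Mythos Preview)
Your proposal is correct and matches the paper's approach: the paper simply states that Corollary~\ref{cor-setbds} is an immediate consequence of Theorem~\ref{thm-maxSet}, and your argument—lower bounding the expectation by $\eps m^n$ using the $(m,\eps)$-Kakeya hypothesis and nonnegativity—is exactly that immediate deduction.
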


To our knowledge, Corollary~\ref{cor-setbds} is the first $(m,\eps)$-Kakeya set bounds over $\Z/N\Z$ for general $N$ (even the case of $N$ square-free was not known). Earlier, only bounds for $(N,1)$-Kakeya sets were known for composite $N$ which were not prime powers.

Wolff in \cite{wolf1999} first asked the question of lower bounding the sizes of Kakeya sets over finite fields whose resolution might help in proving the Euclidean Kakeya conjecture. Wolff's conjecture was proven by Dvir in \cite{dvir2009size} by proving a lower bound of $q^n/n!$ for Kakeya sets in $\F_q^n$. Using the method of multiplicities and its extensions \cite{saraf2008,DKSS13,bukh2021sharp} the constant was improved from $n!^{-1}$ to $2^{-n+1}$ which is known to be tight. 

Ellenberg, Oberlin and Tao in \cite{ellenberg2010kakeya} proposed studying the size of Kakeya sets over the rings $\Z/p^k\Z$ and $\F_q[x]/\langle x^k\rangle$. They were motivated by the fact that these rings have ``scales" and hence are closer to the Euclidean version of the problem. Hickman and Wright posed the Kakeya set problem for $\Z/N\Z$ with arbitrary $N$ and considered connections between the problem and the Kakeya conjecture over the $p$-adics in \cite{hickman2018fourier}. Indeed, proving strong enough lower bounds for the sizes of Kakeya sets over the rings $\Z/p^k\Z$ and $\F_q[x]/\langle x^k\rangle$ will resolve the Minkowski dimension Kakeya set conjecture for the $p$-adic integers and and the power series ring $\F_q[[x]]$ respectively~\cite{ellenberg2010kakeya,dummithablicsek2013,hickman2018fourier}. The Kakeya problem over these rings is interesting as similarly to the Euclidean case one can construct Kakeya sets of Haar measure $0$ for the $p$-adic integers and the power series ring $\F_q[[x]]$. The constructions and generalizations to other settings can be found in \cite{dummithablicsek2013,Fraser_2016,CML_2018__10_1_3_0,hickman2018fourier,DharGeneral}.

For $n=2$ and $N=p^k$, Dummit and Hablicsek in \cite{dummithablicsek2013} proved that the sizes of Kakeya sets are lower bounded by $p^{2k}/2k$. The author and Dvir~\cite{dhar2021proof} gave almost tight Kakeya set lower bounds for $\Z/N\Z$ with square-free $N$. Arsovski in \cite{arsovskiNew} gave the first $(m,\epsilon)$-Kakeya set lower bounds for $\Z/p^k\Z$ resolving the $p$-adic Kakeya conjecture. A quantitatively stronger bound for $(m,\eps)$-Kakeya sets over $\Z/p^k\Z$ were proven in \cite{DharGeneral} (by extending the proof in \cite{arsovski2021padic} which is an earlier version of \cite{arsovskiNew} with a different proof for only $(p^k,1)$-Kakeya sets) and shown to be almost sharp for $(p^k,1)$-Kakeya sets. \cite{DharGeneral} also proved Kakeya set lower bounds over $\Z/N\Z$ for general $N$. \cite{salvatore2022kakeya} extended the techniques in \cite{arsovskiNew} giving $(m,\eps)$-Kakeya bounds over $\F_q[x]/\langle x^k\rangle$ and resolving the Kakeya set conjecture over $\F_q[[x]]$. 

The general case of our results are Maximal Kakeya bounds over $\Z/N\Z$ for general $N$ which gives norm lower bounds for functions $f:(\Z/N\Z)^n\rightarrow \Z_{\ge 0}$ using the data of the largest `intersection' a line in a direction can have with $f$. The size of `intersections' of a line $L$ and $f$ is $\sum_{x\in L} f(x)$. This information about `intersections' is encapsulated using the following definition.

\begin{define}[Maximal function]
Given $f: (\Z/N\Z)^n\rightarrow \Z_{\ge 0}$ we define the maximal function $f^*:\P (\Z/N\Z)^{n-1}\rightarrow \Z_{\ge 0}$ as follows,
$$f^*(u)= \sup\limits_{a\in (\Z/N\Z)^n} \left(\sum\limits_{t\in \Z/N\Z} f(a+tu)\right).$$
\end{define}

The most general result we prove will lower bound the norm of $f$ by the norm of $f^*$. Before describing the statement in its full generality we first state it for $N=p^k$.

\begin{thm}[Maximal Kakeya bounds over $\Z/p^k\Z$]\label{thm-maxpk}
Let $n>0$ be an integer, $p$ prime and $k\in \N$. For any function $f:(\Z/p^k\Z)^n\rightarrow \N$ we have the following bound,
$$\sum\limits_{x\in (\Z/p^k\Z)^n} |f(x)|^n \ge C_{p^k,n}\underset{u \in \P (\Z/p^k\Z)^{n-1}}{\E}[|f^*(u)|^n]= \frac{C_{p^k,n}}{|\P(\Z/p^k\Z)^{n-1}|} \left(\sum\limits_{u\in \P (\Z/p^k\Z)^{n-1}} |f^*(u)|^n\right),$$
where
$$C_{p^k,n}= \frac{1}{(2(\lceil\log_p(\max_u f^*(u))+ \log_{p}(n)\rceil ))^n}.$$
When $p>n$ we can improve $C_{p^k,n}$ to,
$$C_{p^k,n}=(\lceil\log_p(\max_u f^*(u))\rceil+1)^{-n} (1+n/p)^{-n}.$$
\end{thm}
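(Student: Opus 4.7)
My plan is to prove Theorem~\ref{thm-maxpk} by combining the multiplicity-based polynomial method over $\Z/p^{k+c}\Z$ from \cite{arsovski2021padic,DharGeneral} with a probabilistic averaging argument in the style of \cite{dhar2022linear}, adapted to weighted functions. Write $M := \max_u f^*(u)$ and set $c := \lceil \log_p(M)\rceil + \lceil \log_p(n)\rceil$ (dropping the second term when $p>n$), and work in the lifted ring $R := \Z/p^{k+c}\Z$. The extra $c$ digits below $p^k$ serve as a ``derivative scale'': they allow one to define Hasse-type vanishing of order up to $nM$ of a polynomial $P \in R[x_1,\ldots,x_n]$ at a lift $\hat x \in R^n$ of $x \in (\Z/p^k\Z)^n$, in a way that imposes essentially $\binom{f(x)+n-1}{n}$ independent $R$-linear conditions, analogously to the characteristic-zero setting.

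Next I would fix degree parameters $(d_1,\ldots,d_n)$, to be chosen in an averaged manner later, and consider the free $R$-module of polynomials of coordinate degree $\le d_i$, whose rank is $\prod_i (d_i+1)$. Whenever $\prod_i (d_i+1)$ exceeds the total number of vanishing conditions, a nonzero $P$ with unit leading coefficient exists that vanishes to Hasse order $f(x)$ at every lift of every $x$. For each direction $u$ and a line $L_u$ realising $f^*(u)$, restricting $P$ to $L_u$ yields a univariate polynomial in $R[t]$ whose total weighted vanishing is $\ge f^*(u)$; the one-dimensional lemma in the spirit of \cite{DharGeneral,dhar2021proof} then forces the top homogeneous part $P^{\mathrm{top}}(u)$ of $P$ to vanish on $u$ modulo a fixed power of $p$, losing at most $2^c$ along the way (this is the source of the $2^n$ factor in $C_{p^k,n}$). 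A Schwartz--Zippel-type count on $\P(\Z/p^k\Z)^{n-1}$, together with an averaging over the choice of $(d_i)$, converts the pointwise $f^*(u)$ bound into the claimed $L^n$ inequality $\sum_x f(x)^n \ge C_{p^k,n}\,\E_u[f^*(u)^n]$.

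The probabilistic averaging of \cite{dhar2022linear} enters precisely in the selection of the degree profile: to turn $\prod_i (d_i+1)$ into the geometric mean $(\sum_u f^*(u)^n)^{1/n}$, one samples each $d_i$ uniformly from a set of size $\lceil\log_p(Mn)\rceil$ and applies a H\"older-style aggregation along lines. The main obstacle I anticipate is the multiplicity calculus over $R$ when $p \le n$: Hasse derivatives lose a factor of $p$ whenever a denominator contains a factor of $p$, and tracking these losses is what forces the $\lceil\log_p n\rceil$ correction in $c$ (and hence in $C_{p^k,n}$), while allowing the sharper $(1+n/p)^{-n}$ factor when $p > n$. A secondary subtlety, familiar from \cite{DharGeneral}, is that $\P(\Z/p^k\Z)^{n-1}$ has a nontrivial unit fibre structure; handling the normalisation $|\P(\Z/p^k\Z)^{n-1}|$ correctly in the averaging step requires a careful choice of representatives, which I would do by fixing a canonical lift of each direction with at least one unit coordinate.
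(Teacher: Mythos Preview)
Your outline diverges from the paper's proof in its core mechanism, and as written it has a genuine gap at the ``restrict to a line'' step.

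You propose to work with polynomials $P\in R[x_1,\ldots,x_n]$ over $R=\Z/p^{k+c}\Z$, impose Hasse-type vanishing at lifts, restrict to a line, and then run a Schwartz--Zippel count on $\P(\Z/p^k\Z)^{n-1}$. The difficulty is that $R$ is not a field: a nonzero univariate polynomial over $\Z/p^{k+c}\Z$ can vanish on arbitrarily many points (e.g.\ $p\,t$ vanishes on all multiples of $p^{k+c-1}$), and there is no Schwartz--Zippel lemma over $\P(\Z/p^k\Z)^{n-1}$ in the sense you need. So the step ``the top homogeneous part $P^{\mathrm{top}}(u)$ must vanish on $u$, hence a degree count on directions gives a contradiction'' does not go through. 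This is exactly the obstacle that forced Arsovski (and then \cite{DharGeneral}) to abandon polynomials over $\Z/p^k\Z$ and instead embed into the complex torus. The extra $p^c$ you introduce does not fix this: it gives you more room for multiplicities, but not a root-counting lemma.

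What the paper actually does is quite different. It maps $x\in(\Z/p^k\Z)^n$ to $\zeta^x$ with $\zeta$ a primitive $p^k$-th root of unity, and works with the rank over $\Q(\zeta)$ of the matrix $U$ whose rows are the evaluation vectors $U_{mw}^{(\balpha)}(\zeta^x)$ for $\mathrm{wt}(\balpha)<mf(x)$; this rank is trivially at most $\sum_x\binom{mf(x)+n-1}{n}$. The decoding lemma (Corollary~\ref{cor:decoding}) says that from the rows along any line $L_u$ with total weight $\ge \ell$ one can recover, after applying the quotient $\psi_{p^k}:\Z(\zeta)\to\F_p$, the rows of $\Coeff(M^{\ell}_{mw,n}(u'))$, where $M^{\ell}_{m,n}$ is Arsovski's matrix $z^{\langle u,v\rangle}$ over $\F_p[z]/\langle(z-1)^\ell\rangle$. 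The $\lceil\log_p(\cdot)\rceil$ loss in $C_{p^k,n}$ is \emph{not} a Hasse-derivative denominator loss as you suggest; it is the rank lower bound for $M^{\ell}_{m,n}$ (Lemma~\ref{lem:1rank}, via Lucas' theorem), which gives $\binom{\ell/\lceil\log_p\ell\rceil+n}{n}$.

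Finally, the probabilistic ingredient is also different from what you sketch. There is no random degree profile $(d_1,\ldots,d_n)$; instead one applies a uniformly random $G\in\GL_n(\Z/p^k\Z)$ to $f$. Corollary~\ref{cor:splitM} produces a fixed layered basis $A_1\cup\cdots\cup A_{mw}$ of the row space of $\Coeff(M^{mw}_{mw,n})$, with $A_j$ living genuinely at level $j$; transitivity of $\GL_n$ on $\P(\Z/p^k\Z)^{n-1}$ ensures that each row in $A_j$ (for $j\in(m(i-1),mi]$) lands in the decoded matrix with probability $\ge \epsilon_{\ge i}=|B_{\ge i}|/|\P(\Z/p^k\Z)^{n-1}|$. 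Linearity of expectation then yields inequality \eqref{eq-pk}, and the stated constants come out after setting $m=n$ (or $m=p$ when $p>n$) and telescoping.
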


We note $\log_p(\max_u f^*(u))$ is bounded above by $k+\log_p(\|f\|_{\ell^\infty})$ which means that as long as the terms are not too big we can take the above inequality as a comparison between the $\ell^n$ norms of $f$ and $f^*$. 

Bounds of these types were first proven for finite fields in \cite{ellenberg2010kakeya}. Our results will prove these bounds over $\Z/N\Z$ for general $N$.

To state the general case of composite $N$ we first need some simple facts which follow from the Chinese remainder theorem.

\begin{fact}[Geometry of $\Z/p^kN_0\Z$]\label{fact:geoChine}
Let $p,N_0,n,k\in \N,R=\Z/p^kN_0\Z,R_0=\Z/N_0\Z$ with $p$ prime and co-prime to $N_0$.  Using the Chinese remainder theorem we know that any co-ordinate in $R^n$ can be uniquely represented by a tuple in $(\Z/p^k\Z)^n\times R_0^n$. Also any direction in the projective space $\P R^{n-1}$ can again be uniquely represented by a tuple in $\P (\Z/p^k\Z)^{n-1} \times \P R_0^{n-1}$. Finally, any line $L$ with direction $b=(b_p,b_0)\in \P (\Z/p^k\Z)^{n-1} \times \P R_0^{n-1}$ in $R^n$ is equivalent to the product of a line $L_p\subset (\Z/p^k\Z)^n$ in direction $b_p$ and a line $L_0\subset R_0^n$ in direction $b_0$. 
\end{fact}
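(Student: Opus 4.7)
The plan is to deduce all three assertions from the ring isomorphism $\varphi: R \to (\Z/p^k\Z) \times R_0$ given by the Chinese remainder theorem, which is well-defined since $\gcd(p^k, N_0) = 1$. First I would note that applying $\varphi$ coordinate-wise yields a bijection $R^n \to (\Z/p^k\Z)^n \times R_0^n$; this immediately gives the first assertion about coordinates. A small but important point is that $\varphi$ restricts to an isomorphism of unit groups $R^\times \cong (\Z/p^k\Z)^\times \times R_0^\times$, since an element of a product ring is a unit exactly when each component is.

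For the second assertion on projective spaces, let $N_0 = p_1^{k_1}\cdots p_r^{k_r}$ and recall that $\P R^{n-1}$ consists of vectors $u \in R^n$ whose reduction modulo each prime power of $R$ (namely $p^k$ and each $p_i^{k_i}$) has at least one unit coordinate, taken up to multiplication by $R^\times$. Writing $u = (u_p, u_0)$ under $\varphi$, the condition that $u \bmod p^k$ has a unit coordinate is exactly the condition that $u_p \in \P(\Z/p^k\Z)^{n-1}$ before quotienting by units, and the conditions modulo the prime powers of $N_0$ are exactly those defining $\P R_0^{n-1}$. The unit-quotient respects the product decomposition because $R^\times$ splits as a product, so scaling $u$ by a unit corresponds to independently scaling $u_p$ and $u_0$ by units in their respective factors. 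This gives the bijection $\P R^{n-1} \cong \P(\Z/p^k\Z)^{n-1} \times \P R_0^{n-1}$.

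For the third assertion, fix a line $L = \{a + tu : t \in R\}$ with direction $b = (b_p, b_0)$, and choose representatives $a = (a_p, a_0)$ and $u = (u_p, u_0)$ under $\varphi$. Since the isomorphism $R \to \Z/p^k\Z \times R_0$ is additive and respects scalar multiplication by $R$, we have $a + tu \mapsto (a_p + t_p u_p,\, a_0 + t_0 u_0)$ where $t \mapsto (t_p, t_0)$. As $t$ ranges over $R$, the pair $(t_p, t_0)$ ranges independently over all of $\Z/p^k\Z \times R_0$, again by CRT. Hence $L$ is mapped bijectively onto $L_p \times L_0$ where $L_p = \{a_p + t_p u_p : t_p \in \Z/p^k\Z\}$ and $L_0 = \{a_0 + t_0 u_0 : t_0 \in R_0\}$, which are lines in direction $b_p$ and $b_0$ respectively.

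There is no real obstacle here; the statement is essentially bookkeeping around CRT. The one place to be careful is the projective-space step, specifically verifying that both the ``unit coordinate modulo each prime power'' condition and the quotient by the unit group decompose compatibly with $\varphi$; this is why I would isolate the fact $R^\times \cong (\Z/p^k\Z)^\times \times R_0^\times$ at the very start so that both the defining condition and the equivalence relation for projective space descend coordinate-by-coordinate to the two factors.
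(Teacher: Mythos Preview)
Your proposal is correct and matches the paper's approach: the paper does not give a proof at all, merely asserting that the fact follows from the Chinese remainder theorem, and your argument correctly supplies the routine details (coordinate-wise CRT for $R^n$, the splitting $R^\times \cong (\Z/p^k\Z)^\times \times R_0^\times$ for the projective-space step, and the independent ranging of $(t_p,t_0)$ for the line decomposition).
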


We will need to define a new quantity on which our bounds will depend.

\begin{define}[$p$-Maximal weight]
For $p$ and $N$ coprime and $f:(\Z/p^kN\Z)^n\rightarrow \N$ we define the $p$-maximal weight {\em $\mweight(f,p)$} as follows:

Let $L(u)=\{a_u+tu|t\in \Z/p^kN\Z\}$ be a line such that $\sum_{x\in L(u)} f(x)=f^*(u)$. Using Fact~\ref{fact:geoChine} we note that the line $L(u)$ can be written as a product of lines $L_p(u)\subseteq (\Z/p^k\Z)^n$ and $L_1(u)\subseteq (\Z/N\Z)^n$. We define,
{\em $$\mweight(f,p)=\sup_{u\in \P (\Z/p^kN\Z)^{n-1},z\in L_1(u)} \sum\limits_{x\in L_p(u)} f((x,z)).$$}
\end{define}

Note, $\mweight(f,p)$ for $f:(\Z/p^k\Z)^n\rightarrow \N$ is simply $\max_u f^*(u)$.

We can finally state our main theorem.

\begin{thm}[Maximal Kakeya bounds over $\Z/N\Z$ for general $N$]\label{thm-maxN}
Let $n>0$ be an integer and $N=p_1^{k_1}\hdots p_r^{k_r}$ with $p_i$ primes and $k_i\in \N$. For any function $f:(\Z/N\Z)^n\rightarrow \N$ we have the following bound,
$$\sum\limits_{x\in (\Z/N\Z)^n} |f(x)|^n \ge  C_{N,n}\underset{u \in \P (\Z/N\Z)^{n-1}}{\E}[|f^*(u)|^n]=\frac{C_{N,n}}{|\P(\Z/N\Z)^{n-1}|} \left(\sum\limits_{u\in \P (\Z/N\Z)^{n-1}} |f^*(u)|^n\right),$$
where
{\em \begin{align*}
    C_{N,n}=&\left(\frac{1}{2(\log(\mweight(f,p_1))+1)\lceil \log_{p_1}(\mweight(f,p_1))+\log_{p_1}(n)\rceil}\right)^n\\
    &\cdot \left(\frac{1}{2(k_r+\lceil \log_{p_r}(n)\rceil)}  \prod\limits_{i=2}^{r-1}\frac{1}{2(k_i\log(p_i)+1)(k_i+\lceil \log_{p_i}(n)\rceil)}\right)^n
\end{align*}.}
\end{thm}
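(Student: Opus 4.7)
I proceed by induction on the number $r$ of distinct prime factors of $N$. The base case $r=1$ is Theorem~\ref{thm-maxpk}: for $N=p_1^{k_1}$, $\max_u f^*(u)=\mweight(f,p_1)$ supplies the $p_1$-Kakeya factor of $C_{N,n}$ and the remaining factors are trivial. For the inductive step, write $N=Mp^k$ with $p$ one of the primes and $M=N/p^k$; by Fact~\ref{fact:geoChine}, $(\Z/N\Z)^n\cong(\Z/M\Z)^n\times(\Z/p^k\Z)^n$, every line factors as $L=L_M\times L_p$, and every direction as $u=(u_M,u_p)$.

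For each $y\in(\Z/M\Z)^n$ consider the slice $\varphi_y(z):=f(y,z)$ on $(\Z/p^k\Z)^n$. I apply Theorem~\ref{thm-maxpk} to $\varphi_y$ fiberwise, obtaining a bound of the form $\sum_z\varphi_y(z)^n\ge C_{p^k,n}(\varphi_y)\,\E_{u_p}\varphi_y^*(u_p)^n$; after a dyadic argument (explained below) that replaces the $y$-dependent constant by a uniform $C_p$, summing over $y$ yields
\[
\sum_x f(x)^n \;\ge\; C_p\,\E_{u_p}\sum_y G_{u_p}(y)^n,\qquad G_{u_p}(y):=\varphi_y^*(u_p).
\]
For each $u_p$ I then apply the inductive hypothesis to $G_{u_p}:(\Z/M\Z)^n\to\N$ to get $\sum_y G_{u_p}(y)^n\ge C_{M,n}\,\E_{u_M}(G_{u_p}^*(u_M))^n$. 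The key identity
\[
G_{u_p}^*(u_M)=\max_{L_M}\sum_{y\in L_M}\max_{L_p}\sum_{z\in L_p} f(y,z)\;\ge\;\max_{L_M,L_p}\sum_{(y,z)\in L_M\times L_p}f(y,z)=f^*(u_M,u_p),
\]
(which follows from $\sum\max\ge\max\sum$ together with the fact that $L_M\times L_p$ is a valid line in direction $(u_M,u_p)$) then combines everything into $\sum_x f(x)^n\ge C_p\cdot C_{M,n}\cdot\E_u(f^*(u))^n$.

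The delicate part is the ``uniform constant'' $C_p$ and the preservation of the $\mweight$-based $p_1$-factor across the induction. The naive bound $\max_{u_p}\varphi_y^*(u_p)\le p^k\|f\|_\infty$ would pollute the constant with $\log\|f\|_\infty$. To avoid this I dyadically split the $y$'s (and/or the values $f(y,z)$) by a carefully chosen level parameter and recombine the pieces using H\"older's inequality together with the probabilistic-method argument from \cite{dhar2022linear}; this produces exactly $O(k\log p+1)$ effective dyadic levels (eliminating the $\|f\|_\infty$ dependence) and a dyadic factor $(k\log p+1)^{-n}$ attached to the prime being peeled. A parallel analysis shows $\mweight(G_{u_p},p_1)$ is controlled by $\mweight(f,p_1)$ via the product-line factorization of achieving lines, so the special $p_1$-factor of $C_{N,n}$ persists through the recursion. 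The bookkeeping is arranged so that the last prime added in the induction (the overall $p_r$) contributes only the clean Kakeya factor with no dyadic loss, matching the formula.

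\textbf{Main obstacle.} The main technical challenge is calibrating the dyadic/probabilistic argument so that it produces exactly the prime-specific factors stated — namely one $(k_i\log p_i+1)^{-n}$ per prime $p_i$ for $i=2,\dots,r-1$, the improved $(\log\mweight(f,p_1)+1)^{-n}$ factor for $p_1$, and none for $p_r$. Equally subtle is showing that $\mweight$ is preserved through the recursion even though the inductively-constructed auxiliary functions $G_{u_p}$ are sums of maxima (rather than single line sums); this requires a careful geometric argument about achieving lines, generalizing the $\F_q$-level counting argument of \cite{dhar2022linear} to the full Chinese remainder product $\Z/N\Z$.
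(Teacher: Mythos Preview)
Your scheme --- apply Theorem~\ref{thm-maxpk} to each slice $\varphi_y$, then apply the inductive hypothesis to the resulting functions $G_{u_p}$ --- is the natural first thing to try, but it has a genuine gap that the paper's proof does \emph{not} resolve by a ``dyadic/H\"older'' patch; it takes a structurally different route.

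The concrete failure is in the constants. When you apply the inductive hypothesis to the function $G_{u_p}:(\Z/M\Z)^n\to\N$, the constant $C_{M,n}$ you obtain depends on $\mweight(G_{u_p},p_i)$ for each prime $p_i\mid M$. But $G_{u_p}(y)=\max_{L_p}\sum_{z\in L_p}f(y,z)$ already aggregates $p^k$ values of $f$, so $\|G_{u_p}\|_\infty$ can be $p^k\|f\|_\infty$ and hence $\mweight(G_{u_p},p_i)$ can be $p^k$ times larger than anything controlled by $\mweight(f,p_i)$. Your claim that ``$\mweight(G_{u_p},p_1)$ is controlled by $\mweight(f,p_1)$ via the product-line factorization of achieving lines'' is not correct: $\mweight(f,p_1)$ only sees $f$ summed along the $p_1$-component of the \emph{achieving} line $L(u)$, whereas $G_{u_p}$ at a generic point $y$ involves maxima over \emph{all} $p$-lines through $(y,\cdot)$, not just those lying on achieving lines. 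Iterating this over $r$ primes the losses compound, and no dyadic averaging recovers the stated form of $C_{N,n}$ (in particular the clean $(k_i\log p_i+1)^{-n}$ factors for $2\le i\le r-1$ with no $\|f\|_\infty$).

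The paper avoids this by \emph{not} using Theorem~\ref{thm-maxpk} as a black box. It opens up the polynomial-method machinery for the prime $p_0$ being peeled off: one builds the matrix of rows $U^{\balpha}_{mw}(\zeta^x)\otimes\Ind_y$ with multiplicity $mf(x,y)$ at each individual point, then via Corollary~\ref{cor:decoding} and Corollary~\ref{cor:splitM} reduces to lower-bounding ranks of matrices of the form $\Coeff(M^j_{mw,n}(u'_0))\otimes\Ind_y$. The crucial difference is that the induction hypothesis is then applied not to a function but to the \emph{sets} $S(u_0,i)=\bigcup_{u_1}B_{\ge i}(L(u_0,u_1))\subset(\Z/N_1\Z)^n$ (i.e.\ Theorem~\ref{thm-maxSet}); for indicator functions $\mweight(\Ind_{S},p_i)\le p_i^{k_i}$ trivially, which is exactly what produces the $(k_i\log p_i+1)$ factors with no $\|f\|_\infty$ contamination. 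The $p_0$-factor with $\mweight(f,p_0)$ arises because the parameter $w$ in the matrix $M^{mw}_{mw,n}$ is chosen to be $\mweight(f,p_0)$, which suffices since one only needs to decode along the achieving lines. The random $G\in\GL_n(\Z/p_0^{k_0}\Z)$ and linearity of expectation then replace your proposed dyadic splitting, and a separate pigeonhole argument (Claim~\ref{claim-srint}) supplies the single $(\log w+1)^{-n}$ loss. In short: the induction must be run on the rank bound and on sets, not on the final norm inequality and on functions.
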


Similarly to the comment after Theorem~\ref{thm-maxpk} we note that $\log(\mweight(f,p_1))$ is bounded above by $k_1\log(p_1)+\log(\|f\|_{\ell^\infty})$. As before as long as the terms are not too big we can take the above inequality as a comparison between the $\ell^n$ norms of $f$ and $f^*$. 

The overall argument will be via induction. The base case of Theorem~\ref{thm-maxN} is Theorem~\ref{thm-maxpk} (in fact the constants are slightly better for the $r=1$ case). We also note if we apply Theorem~\ref{thm-maxN} for $f$ equaling the indicator function $\Ind_S:(\Z/N\Z)^n\rightarrow \Z_{\ge 0}$ of a set $S$ we will get Theorem~\ref{thm-maxSet} (it is easy to see that $\mweight(\Ind_S,p_1)\le p_1$). Recall, $\Ind_S(x)=1$ if $x\in S$ and $0$ otherwise.

The main ideas in the proof involve using and extending the probabilistic and the polynomial method tools from the papers \cite{dhar2021proof,arsovski2021padic,DharGeneral,dhar2022linear}. In particular, generalizing the new probabilistic method arguments from \cite{dhar2022linear} is what allows us to overcome barriers to deal with the case of composite $N$ with multiple prime factors. As mentioned earlier, even $(m,\epsilon)$-Kakeya bounds were not known for non prime power $N$ (even if $N$ was square-free). Indeed, already known techniques (in particular, combining the arguments from \cite{arsovskiNew} and \cite{ellenberg2010kakeya}) would have sufficed to prove maximal bounds for the case of prime powers.

In addition, we also show that a simple application of the arguments in \cite{dhar2022linear} leads to a direct and simpler proof for Maximal Kakeya bounds over finite fields with almost sharp constants.

\begin{thm}[Maximal Kakeya bounds over finite fields]\label{thm-field}
Let $n>0$ be an integer, $q$ a prime power. For any function $f:\F_q^n\rightarrow \C$ we have the following bound,
$$\sum\limits_{x\in \F_q^n} |f(x)|^n \ge  \frac{1}{2^n}\underset{u \in \P \F_q^{n-1}}{\E}[|f^*(u)|^n]=\frac{1}{2^n} |\P \F_q^{n-1}|^{-1} \left(\sum\limits_{u\in \P \F_q^{n-1}} |f^*(u)|^n\right).$$
\end{thm}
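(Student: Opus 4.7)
The plan is to prove Theorem~\ref{thm-field} by a direct polynomial-method argument with multiplicities, following the template of \cite{dhar2022linear} in the simplest finite-field setting. After reducing to $f: \F_q^n \to \Z_{\ge 0}$ (the complex case follows from considering $|f|$ and rescaling), the first step is to fix, for each direction $u \in \P \F_q^{n-1}$, a line $L_u = \{a_u + tu : t \in \F_q\}$ attaining $\sum_{x \in L_u} f(x) = f^*(u)$.

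The core construction is a nonzero polynomial $P \in \F_q[x_1,\ldots,x_n]$ of total degree at most $D$ that vanishes to order $m_x$ at every $x \in \F_q^n$, where the multiplicities are of size $m_x \approx c f(x)$ for a parameter $c > 0$ to be chosen. Such a $P$ exists whenever
\[
  \binom{D+n}{n} \;>\; \sum_{x \in \F_q^n} \binom{m_x + n - 1}{n},
\]
which, via $\binom{m+n-1}{n} \approx m^n/n!$, is essentially the condition $D^n > c^n \sum_x f(x)^n$. The standard line-intersection lemma for multiplicities then gives: for every line $L$, either $P|_L \equiv 0$ or $\sum_{x \in L} m_x \le D$. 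Consequently, for every $u$ with $c f^*(u) > D$, the restriction $P|_{L_u}$ vanishes identically, and extracting the coefficient of $t^D$ in $P|_{L_u}$ forces the top-degree homogeneous part $P_h$ of $P$ to satisfy $P_h(u) = 0$.

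To upgrade this pointwise vanishing into the norm inequality with the sharp constant $2^{-n}$, I would apply a layer-cake decomposition of $\sum_u |f^*(u)|^n$ over thresholds $T$, applying the construction above at each $T$ with $c$ and $D$ tuned so that the dimension count for existence and the line-vanishing condition $cT > D$ are simultaneously tight; balancing these two constraints, together with the fact that a nonzero homogeneous $P_h$ of degree $D$ cannot vanish on too large a projective set, yields the desired $\ell^n$ comparison. The main obstacle is recovering exactly the constant $2^{-n}$ rather than the weaker $(2n)^{-n}$ or $(n+1)^{-n}$ that falls out of a naive deterministic choice $m_x = \lceil c f(x)\rceil$: this is precisely where the probabilistic averaging idea from \cite{dhar2022linear} enters, randomizing the multiplicities $m_x$ and taking expectations so that the discrete binomials $\binom{m_x+n-1}{n}$ are effectively replaced by their continuous counterparts $m_x^n/n!$ without loss in the leading constant. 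Once this sharp bookkeeping is in place, the remaining arithmetic to extract $2^{-n}$ is routine.
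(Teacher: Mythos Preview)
Your outline has the right polynomial-method skeleton, but there are two genuine gaps that prevent it from reaching the theorem.

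\textbf{The layer-cake step does not combine.} Running the construction separately at each threshold $T$ yields, for each $T$, an inequality of the form ``if $\sum_x f(x)^n$ is small then $B_{\ge T}$ is small''. But these inequalities all have the \emph{same} left-hand side $\sum_x f(x)^n$, so you cannot sum them over $T$ to recover $\sum_u f^*(u)^n$. The paper's mechanism for handling all thresholds at once is different: it builds a \emph{single} polynomial $Q$ whose monomial support is restricted degree-by-degree. For each degree $j$ with $d_{i-1}<j\le d_i$, Lemma~\ref{lem-goodmono} supplies a set $P(j)$ of $\epsilon_{\ge i}\binom{j+n-1}{n-1}$ degree-$j$ monomials such that no nonzero combination of them vanishes to order $r_i$ on all of $B_{\ge i}$. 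The total number of allowed monomials is then exactly $\sum_i \epsilon_{\ge i}\sum_{j=d_{i-1}+1}^{d_i}\binom{j+n-1}{n-1}$, and comparing this against the number $\sum_x\binom{mf(x)+n-1}{n}$ of vanishing conditions gives the key inequality in one shot. Whatever degree $Q$ ends up having, its top piece $Q^H$ lies in some $P(d)$ and must vanish to order $r_i$ on $B_{\ge i}$, which is the contradiction. This ``graded monomial restriction via Lemma~\ref{lem-goodmono}'' is the idea missing from your sketch.

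\textbf{The probabilistic input is not where you put it.} You attribute the sharp constant to ``randomizing the multiplicities $m_x$''. The paper never randomizes multiplicities; they are deterministically $mf(x)$, and the discretization loss is removed by sending the integer parameter $m\to\infty$. The probabilistic method enters only inside Lemma~\ref{lem-goodmono}: one shows, by averaging over a random $\GL_n(\F_q)$ rotation of $B$, that $\EVAL^r(B,W_{j,n})$ has rank at least $\epsilon\binom{j+n-1}{n-1}$, which is what furnishes the sets $P(j)$. Relatedly, your argument only extracts the simple vanishing $P_h(u)=0$; what is actually needed, and what the paper proves via the Hasse-derivative calculus (Lemma~\ref{lem:chainRule} and Lemma~\ref{lem:multComp}), is that $Q^H$ vanishes on $B_{\ge i}$ with multiplicity at least $r_i$, since that is precisely the hypothesis Lemma~\ref{lem-goodmono} contradicts.
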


The first bounds of this type were proven in \cite{ellenberg2010kakeya} where it was also proven for the more general case of varities by reducing to the case of the case of $\F^n_q$. Our arguments can be combined with theirs to improve the constants but we only focus on the case of $\F^n_q$ as that is where our new argument is being used.

The sharpness of the above theorem is evident if we take the indicator function of Kakeya sets which then gives us the lower bounds from \cite{DKSS13} which are tight up to a factor of $2$.\footnote{Indeed our arguments can be combined with the arguments of \cite{bukh2021sharp} to get rid of this $2$ but we do not do that to simplify the proof.}  Finally, we note that Theorem~\ref{thm-maxN} can be used to resolve the following statement of the Kakeya maximal conjecture in \cite{hickman2018fourier}. 

\begin{conjecture}[Kakeya Maximal conjecture over $\Z/N\Z$]\label{conj}
For all $\eps > 0$ and $n\in \N$ there exists a constant $C_{n,\eps}$ such that the following holds:
For a choice of a line $L(u)$ for each direction $u\in \P (\Z/N\Z)^{n-1}$ we have,
$$ \left\|\sum\limits_{u \in \P (\Z/N\Z)^{n-1}} \Ind_{L(u)}\right\|_{\ell^{n/(n-1)}} \leq C_{n,\epsilon} N^{\eps} \left(\sum\limits_{u \in \P (\Z/N\Z)^{n-1}} |L(u)| \right)^{(n-1)/n}.$$
\end{conjecture}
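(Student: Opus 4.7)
I would deduce Conjecture~\ref{conj} from Theorem~\ref{thm-maxN} by the standard duality argument between $\ell^{n/(n-1)}$-bounds on a sum of line indicators and $\ell^n$-bounds on a maximal function. Writing $h = \sum_u \Ind_{L(u)}$, the duality $(\ell^{n/(n-1)})^* = \ell^n$ gives
\begin{equation*}
\|h\|_{\ell^{n/(n-1)}} = \sup_{g \ge 0,\ \|g\|_{\ell^n} = 1} \sum_u \sum_{x \in L(u)} g(x) \le \sup_g \sum_u g^*(u),
\end{equation*}
since $\sum_{x \in L(u)} g(x) \le g^*(u)$ by definition of the maximal function. It therefore suffices to prove $\sum_u g^*(u) \le C_{n,\eps} N^{\eps} \bigl(\sum_u |L(u)|\bigr)^{(n-1)/n} \|g\|_{\ell^n}$ for every $g \ge 0$.

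Next I would combine H\"older's inequality (against the constant function $1$ on $\P(\Z/N\Z)^{n-1}$) with Theorem~\ref{thm-maxN} applied to $g$:
\begin{equation*}
\sum_u g^*(u) \le |\P(\Z/N\Z)^{n-1}|^{(n-1)/n} \Bigl(\sum_u g^*(u)^n\Bigr)^{1/n} \le C_{N,n}^{-1/n} \cdot |\P(\Z/N\Z)^{n-1}| \cdot \|g\|_{\ell^n}.
\end{equation*}
Using $|L(u)| = N$ together with the Chinese remainder count $|\P(\Z/N\Z)^{n-1}| \le 2^{\omega(N)} N^{n-1}$ (so $|\P|^{1/n} \le 2^{\omega(N)/n} N^{(n-1)/n}$) yields $|\P| \le 2^{\omega(N)/n}\bigl(\sum_u |L(u)|\bigr)^{(n-1)/n}$. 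The conjecture thus reduces to the arithmetic estimate $2^{\omega(N)/n} C_{N,n}^{-1/n} \le C_{n,\eps} N^{\eps}$.

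The main remaining work, and the step I expect to be the main obstacle, is showing $C_{N,n}^{-1/n} = N^{o(1)}$ as $N \to \infty$ for fixed $n$. H\"older on a single line $L_p(u)$ gives $\mweight(g, p_1) \le N^{(n-1)/n}$ when $\|g\|_{\ell^n} \le 1$, so the first factor of $C_{N,n}^{-1}$ contributes only $O(\log^2 N)$. Bounding the product $\prod_{i=2}^{r-1}(k_i \log p_i + 1)(k_i + \lceil\log_{p_i} n\rceil)$ is delicate: using $\sum_i \log p_i \le \log N$ together with $\omega(N) \le (1+o(1))\log N / \log \log N$, one verifies that $\sum_i \log \log p_i = o(\log N)$ (the worst case being essentially primorial $N$, where the sum is $\sim (\log N / \log\log N)\log\log\log N$), from which $\log C_{N,n}^{-1/n} = o(\log N)$; combined with $2^{\omega(N)} = N^{o(1)}$, this closes the estimate. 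The one additional subtlety is that Theorem~\ref{thm-maxN} is stated for $\N$-valued $f$ while the dual test function $g$ is real-valued; I would handle this by applying the theorem to $\lfloor Mg \rfloor$ for $M$ polynomial in $N$, using $|\lfloor Mg\rfloor^*(u) - Mg^*(u)| \le N$ and discarding the contribution of directions with $g^*(u) = O(N/M)$, after which $C_{N,n}(\lfloor Mg\rfloor)$ still remains $N^{o(1)}$ and the bound for $g$ follows. Apart from this routine scaling, no input beyond Theorem~\ref{thm-maxN} is needed.
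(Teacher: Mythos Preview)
Your proof is correct and follows the same duality-plus-H\"older route as the paper's Section~\ref{sec-conj}. The only cosmetic differences are that the paper tests against the explicit extremizer $g=h^{1/(n-1)}$ (so that $g\ge 1$ on its support and the integer round-off $f=\lceil g\rceil$ already satisfies $g\le f\le 2g$, sidestepping your scaling-by-$M$ step), and that it packages the arithmetic estimate $\prod_i k_i\log p_i=N^{o(1)}$ as a black-box citation to Hardy--Wright rather than sketching the primorial argument.
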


The above inequality simply follows from taking the dual of Theorem~\ref{thm-maxN}. This will be shown in Section~\ref{sec-conj}.

\subsection{Acknowledgements}
The author would like to thank Zeev Dvir and Suryateja Gavva for helpful comments and discussion. Research supported by NSF grant DMS-1953807.

\subsection{Paper Organization:}
In Section~\ref{sec-pre} we state preliminaries from \cite{DKSS13,dhar2021proof,dhar2022linear} which we will need. In Section~\ref{sec-field} we prove Theorem~\ref{thm-field} with our new techniques. In Section~\ref{sec-base} we prove Theorem~\ref{thm-maxpk} which is the base case of our induction argument. In Section~\ref{sec-induct} we finally prove Theorem~\ref{thm-maxN}. In Section~\ref{sec-conj} we prove Conjecture~\ref{sec-conj}.

\section{Preliminaries}\label{sec-pre}
\subsection{Multiplicities and Hasse derivative}
We first review the definitions of multiplicities and Hasse derivatives that will be needed in the proof (see  \cite{DKSS13} for a more detailed discussion). We will allow the definitions to be over an arbitrary field $\F$ since we will need to apply them both for $\F=\F_q$ (for Theorem~\ref{thm-field}) and also for $\F = \C$ (for Theorem~\ref{thm-maxN} and Theorem~\ref{thm-maxpk}).

\begin{define}[Hasse Derivatives]
Let $\F$ be a field. Given a polynomial $Q\in \F[x_1,\hdots,x_n]$  and an $\mathbf{i}\in \Z_{\ge 0}^n$ the $\mathbf{i}$th {\em Hasse derivative} of $Q$ is the polynomial $Q^{(\mathbf{i})}$ in the expansion $$Q(x+z)=\sum_{\mathbf{j}\in \Z_{\ge 0}^n} Q^{(\mathbf{j})}(x)z^{\mathbf{j}}$$ where $x=(x_1,...,x_n)$, $z=(z_1,...,z_n)$ and $z^{\mathbf{j}}=\prod_{k=1}^n z_k^{j_k}$.  
\end{define}

Hasse derivatives satisfy the following useful property (see \cite{DKSS13} for a proof). We will only need this property  to show that, if $Q^{(\mathbf{i}+\mathbf{j})}$ vanishes at a point then so does $(Q^{(\mathbf{i})})^{(\mathbf{j})}$.

\begin{lem}\label{lem:chainRule}
Given a polynomial $Q\in \F[x_1,\hdots,x_n]$ and $\mathbf{i},\mathbf{j}\in \Z_{\ge 0}^n$, we have 
$$(Q^{(\mathbf{i})})^{(\mathbf{j})}=Q^{(\mathbf{i}+\mathbf{j})}\prod\limits_{k=1}^n\binom{i_k+j_k}{i_k}$$
\end{lem}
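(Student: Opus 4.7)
The plan is to prove the identity by expanding $Q(x+y+z)$ in two different ways and comparing coefficients of the monomial $y^{\mathbf{i}} z^{\mathbf{j}}$ in $\F[x_1,\ldots,x_n][y_1,\ldots,y_n,z_1,\ldots,z_n]$. This is the standard trick for deriving the Leibniz-type rule for Hasse derivatives, and it sidesteps any direct manipulation of the (possibly cumbersome) combinatorial definition of $Q^{(\mathbf{i})}$ in terms of individual monomials of $Q$.

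First I would expand by shifting once by the vector $y+z$. Using the definition of Hasse derivative with shift $y+z$ in place of $z$, I get
$$Q(x+y+z) = \sum_{\mathbf{k} \in \Z_{\ge 0}^n} Q^{(\mathbf{k})}(x)\,(y+z)^{\mathbf{k}}.$$
Then I would expand $(y+z)^{\mathbf{k}} = \prod_{k=1}^n (y_k+z_k)^{k_k}$ by the usual binomial theorem in each coordinate, yielding $\sum_{\mathbf{i}+\mathbf{j}=\mathbf{k}} \prod_{k=1}^n \binom{i_k+j_k}{i_k} y^{\mathbf{i}} z^{\mathbf{j}}$. Reindexing the sum by $(\mathbf{i},\mathbf{j})$, the coefficient of $y^{\mathbf{i}} z^{\mathbf{j}}$ on this side is $Q^{(\mathbf{i}+\mathbf{j})}(x) \prod_{k=1}^n \binom{i_k+j_k}{i_k}$.

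Next I would expand the same polynomial by performing the two shifts sequentially, first by $y$ and then by $z$. Applying the definition once gives $Q(x+y+z) = \sum_{\mathbf{i}} Q^{(\mathbf{i})}(x+z)\, y^{\mathbf{i}}$, and applying the definition again to each $Q^{(\mathbf{i})}(x+z)$ with shift $z$ produces $\sum_{\mathbf{i}} \sum_{\mathbf{j}} (Q^{(\mathbf{i})})^{(\mathbf{j})}(x)\, z^{\mathbf{j}} y^{\mathbf{i}}$. The coefficient of $y^{\mathbf{i}} z^{\mathbf{j}}$ on this side is therefore $(Q^{(\mathbf{i})})^{(\mathbf{j})}(x)$. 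Equating the two expressions for this coefficient yields the stated identity.

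No step here should be a serious obstacle — the only mild subtlety is that the Hasse derivative definition was stated for a shift by a single vector, so one has to be comfortable applying it iteratively and treating $(y+z)$ as the shift vector in the first expansion, which is legitimate since the definition is purely formal and works over the extended polynomial ring $\F[x][y,z]$. Once that is accepted, comparison of coefficients is immediate and gives the formula.
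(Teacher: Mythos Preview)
Your proof is correct and is exactly the standard double-shift argument for this identity. The paper does not actually supply its own proof of this lemma; it simply cites \cite{DKSS13} for a proof, and the argument you give is precisely the one found there (Proposition~5 in that reference), so there is nothing further to compare.
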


We make precise what it means for a polynomial to vanish on a point $a\in \F^n$ with multiplicity. First we recall for a point $\mathbf{j}$ in the non-negative lattice $\Z^n_{\ge 0}$, its weight is defined as $\text{wt}(\mathbf{j})=\sum_{i=1}^n j_i$.

\begin{define}[Multiplicity]
For a polynomial $Q\in \F[x_1,\hdots,x_n]$ and a point $a\in \F^n$ we say $Q$ vanishes on $a$ with {\em multiplicity} $m\in \Z_{\geq 0}$, if $m$ is the largest integer such that all Hasse derivatives of $Q$ of weight strictly less than $m$ vanish on $a$. We use {\em $\textsf{mult}(Q,a)$} to refer to the multiplicity of $Q$ at $a$.
\end{define}

Note that the number of Hasse derivatives over $\F[x_1,\hdots,x_n]$ with weight strictly less than $m$ is $\binom{n+m-1}{n}$. Hence, requiring that a polynomial vanishes to order $m$ at a single point $a$ enforces the same number of homogeneous linear equations on the coefficients of the polynomial.  We will use the following simple property concerning multiplicities of composition of polynomials (see \cite{DKSS13} for a proof).

\begin{lem}\label{lem:multComp}
Given a polynomial $Q\in \F[x_1,\hdots,x_n]$ and a tuple $G=(g_1,\hdots,g_n)$ of polynomials in $\F[y_1,\hdots,y_m]$, and $a\in \F^m$ we have, 
{\em $$\textsf{mult}(Q\circ G, a)\ge \textsf{mult}(Q,G(a)).$$}
% \text{mult}(g-g(a),a)
%  \text{ and } (f^{(i)})^{(j)}={i+j \choose i}f^{(i+j)}
\end{lem}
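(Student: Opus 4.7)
The plan is to prove the lemma by a direct substitution into the defining expansion of Hasse derivatives. Let $m = \textsf{mult}(Q, G(a))$. The goal is to show that every Hasse derivative $(Q \circ G)^{(\mathbf{i})}$ with $\text{wt}(\mathbf{i}) < m$ vanishes at $a$, which by definition gives $\textsf{mult}(Q \circ G, a) \ge m$.

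First I would write out the Taylor-style identity defining Hasse derivatives, $Q(x + z) = \sum_{\mathbf{j} \in \Z_{\ge 0}^n} Q^{(\mathbf{j})}(x)\, z^{\mathbf{j}}$, and specialize it by setting $x = G(a)$ and $z = G(a+y) - G(a)$, where $y = (y_1, \dots, y_m)$ is a tuple of fresh variables. Since $Q(G(a+y)) = Q(x+z)$ under this substitution, this yields
$$
(Q \circ G)(a+y) = \sum_{\mathbf{j} \in \Z_{\ge 0}^n} Q^{(\mathbf{j})}(G(a)) \cdot \bigl(G(a+y) - G(a)\bigr)^{\mathbf{j}}.
$$

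Next I would split the right-hand sum according to the weight of $\mathbf{j}$. For $\text{wt}(\mathbf{j}) < m$, by definition of multiplicity we have $Q^{(\mathbf{j})}(G(a)) = 0$, so those terms contribute nothing. For $\text{wt}(\mathbf{j}) \ge m$, the crucial observation is that each component $g_k(a+y) - g_k(a)$, viewed as a polynomial in $y$, has no constant term, so its $y$-degree is at least $1$. Consequently $\bigl(G(a+y) - G(a)\bigr)^{\mathbf{j}}$ has every monomial of $y$-degree at least $\text{wt}(\mathbf{j}) \ge m$. Combining the two cases, the expansion of $(Q \circ G)(a+y)$ as a polynomial in $y$ contains no monomial of total $y$-degree strictly less than $m$.

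Finally, comparing with the defining identity $(Q \circ G)(a+y) = \sum_{\mathbf{i}} (Q \circ G)^{(\mathbf{i})}(a)\, y^{\mathbf{i}}$, the coefficient of every $y^{\mathbf{i}}$ with $\text{wt}(\mathbf{i}) < m$ is zero, i.e., $(Q \circ G)^{(\mathbf{i})}(a) = 0$ for all such $\mathbf{i}$. This is exactly what is needed to conclude $\textsf{mult}(Q \circ G, a) \ge m = \textsf{mult}(Q, G(a))$. There is no real obstacle here; the only subtlety worth flagging explicitly is the ``no constant term'' remark that justifies the degree count in the second bullet above, since the inequality (as opposed to equality) arises precisely because the $y$-degree of $g_k(a+y)-g_k(a)$ can exceed $1$.
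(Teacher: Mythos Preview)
Your proof is correct and is precisely the standard argument; the paper itself does not prove this lemma but simply cites \cite{DKSS13}, where the same substitution-and-degree-counting proof appears. There is nothing to add.
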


Finally, we need a lemma which given a $\epsilon$ fraction of directions $B\subseteq \P \F_q^{n-1}$ we can find a large set of monomials of a fixed degree (not depending on $\epsilon$) such that any linear combination of these monomials doesn't vanish on the entirety of $B$ with high multiplicity.

\begin{lem}\label{lem-goodmono}
Let $r\in \N$ and $B\subseteq \P \F_q^{n-1}$ with $|S|\ge \epsilon |\P \F_q^{n-1}|,\epsilon\in [0,1]$ then for any $d<rq$ there exists a set $P_B(d,r), |P_B(d,r)|=\epsilon \binom{d+n}{n}$ of monomials of degree exactly $d$ such that no non-zero linear combination of monomials in $P_B(d,r)$ vanishes with multiplicity at least $r$ over all points in $B$. 
\end{lem}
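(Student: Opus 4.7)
The first step is to recast the lemma as a linear-algebra rank statement. Let $V$ be the space of polynomials in $n$ variables of degree exactly $d$, and for each $u\in\P\F_q^{n-1}$ fix an affine lift $\tilde u\in\F_q^n\setminus\{0\}$. Consider the evaluation-with-multiplicities map
\[
\Phi_B : V \to \F_q^{B \times \{\mathbf{i}\in\Z_{\ge 0}^n\,:\,|\mathbf{i}|<r\}}, \qquad Q \mapsto \bigl(Q^{(\mathbf{i})}(\tilde u)\bigr)_{u\in B,\,|\mathbf{i}|<r}.
\]
Because each $Q^{(\mathbf{i})}$ is homogeneous of degree $d-|\mathbf{i}|$, the vanishing of $Q^{(\mathbf{i})}(\tilde u)$ is independent of the chosen lift, so $\Phi_B$ is well-defined. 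A set $P$ of degree-$d$ monomials satisfies the conclusion of the lemma precisely when $\{\Phi_B(m)\}_{m\in P}$ is linearly independent; the maximum size of such a $P$ equals $\rank(\Phi_B)$. Hence it suffices to prove $\rank(\Phi_B)\geq\epsilon\binom{d+n}{n}$ and then, enumerating degree-$d$ monomials in any fixed order, greedily retain a maximal linearly independent subfamily of $\Phi_B$-images to produce $P_B(d,r)$.

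Next I will verify the baseline $\rank(\Phi_{\P\F_q^{n-1}})=\binom{d+n}{n}$. Any $Q\in\ker\Phi_{\P\F_q^{n-1}}$ vanishes with multiplicity $\geq r$ at every point of $\F_q^n\setminus\{0\}$ and, being homogeneous of positive degree $d$, also satisfies $Q^{(\mathbf{i})}(0)=0$ for all $|\mathbf{i}|<d$, giving multiplicity $\geq d$ at the origin. Summing these contributions and applying the DKSS Schwartz--Zippel bound on the grid $\F_q^n$ yields $r(q^n-1)+d\leq dq^{n-1}$, which rearranges to $d\geq r(q^n-1)/(q^{n-1}-1)>rq$, contradicting the hypothesis $d<rq$. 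Hence the kernel is trivial and $\Phi_{\P\F_q^{n-1}}$ has full rank.

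The heart of the proof is transferring this full-rank property to the arbitrary subset $B$. My plan is to exploit the transitive $\GL_n(\F_q)$-action on $\P\F_q^{n-1}$, which lifts compatibly to $V$ and to the evaluation codomain, in order to build a basis of evaluation functionals in which every projective point contributes the same number of basis elements. Given such a balanced basis, any $B$ of density at least $\epsilon$ automatically supplies at least $\epsilon\binom{d+n}{n}$ independent functionals, yielding the required rank bound and hence, via the greedy extraction described above, the set $P_B(d,r)$. I expect the chief technical difficulty to lie in actually producing this \emph{uniform} decomposition rather than the weaker averaging inequality that follows immediately from $\GL_n(\F_q)$-symmetry: averaging only controls the expected rank of $\Phi_B$ over random $B$, whereas the lemma must hold for the specific $B$ handed to us.
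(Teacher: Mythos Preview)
Your linear-algebra reformulation and the full-rank baseline for $B=\P\F_q^{n-1}$ are correct and coincide with the paper's argument. The gap is in the last step. You propose to build a basis of row-functionals in which every projective point contributes the \emph{same} number of elements; but such a basis need not exist (already $|W_{d,n}|=\binom{d+n-1}{n-1}$ is typically not divisible by $|\P\F_q^{n-1}|$), and you offer no construction. You yourself flag this as the ``chief technical difficulty'' and leave it unresolved, so the proposal is incomplete as it stands.

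More importantly, you discard the averaging route for a mistaken reason. The missing observation is that $\rank\Phi_B$ is $\GL_n(\F_q)$-invariant: for $M\in\GL_n(\F_q)$ the map $Q\mapsto Q\circ M^{-1}$ is an automorphism of $V$, and since multiplicity is preserved under invertible linear substitutions (apply Lemma~\ref{lem:multComp} to both $M$ and $M^{-1}$) one gets $\ker\Phi_{M\cdot B}=\{Q:Q\circ M^{-1}\in\ker\Phi_B\}$, hence $\rank\Phi_{M\cdot B}=\rank\Phi_B$. The paper then averages over $M$, not over $B$: fix any set $R$ of $|W_{d,n}|$ linearly independent rows of $\Phi_{\P\F_q^{n-1}}$; by transitivity of the $\GL_n(\F_q)$-action, for uniformly random $M$ the expected number of rows of $R$ that lie in $\Phi_{M\cdot B}$ is $\epsilon|R|$, so some $M_0$ realises at least this many, giving $\rank\Phi_{M_0\cdot B}\ge\epsilon|W_{d,n}|$. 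Rank invariance then transfers this to the \emph{specific} $B$: $\rank\Phi_B\ge\epsilon|W_{d,n}|$. This is exactly the ``weaker averaging inequality'' you set aside; once coupled with the invariance observation it already finishes the proof, and no uniform basis is needed.
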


This is analogous to Corollary 6.12 in \cite{dhar2022linear}. The proof is nearly identical and uses a probabilistic method argument with the Schwartz-Zippel lemma~\cite{schwartz1979probabilistic,ZippelPaper}. For completeness, we give a proof in the appendix.

\subsection{Rank of matrices with polynomial entries}\label{sec:PreDD}
We will be needing a number of preliminaries from \cite{DharGeneral} which we state without proof.

\begin{define}[Rank of matrices with entries in {$\mathbb{F}[z]/ \langle f(z)\rangle$}]
Given a field $\F$ and a matrix $M$ with entries in $\F[z]/\langle f(z)\rangle$ where $f(z)$ is a non-constant polynomial in $\F[z]$, we define the $\F$-rank of $M$ denoted {\em $\rank_\F M$}  as the maximum number of $\F$-linearly independent columns of $M$.
\end{define}

% In our proof, it will also be convenient to work with the following extension of rank for sets of matrices.

% \begin{define}[crank of a set of matrices]
% Let $\F$ be a field and $f(z)$ a non-constant polynomial in $\F[z]$. Given a finite set $T=\{A_1,\hdots,A_n\}$ of matrices over the ring $\F[z]/\langle f(z)\rangle$ having the same number of columns we let {\em $\crank_\F(T)$} be the $\F$-rank of the matrix obtained by concatenating all the elements $A_i$ in $T$ along their columns. 
% \end{define}

% We will use a simple lemma which follows from the definition of crank.

% \begin{lem}[crank bound for multiplying matrices]\label{lem:crankMatMult}
% Let $\F$ be a field and $f(z)$ a non-constant polynomial in $\F[z]$. Given matrices $A_1,\hdots,A_n$ of size $a\times b$ and matrices $H_1,\hdots, H_n$ of size $c\times a$ with entries in $\F[x]/\langle f(z)\rangle$ we have
% \em{$$\crank_\F\{A_i\}_{i=1}^n\ge\crank_\F\{H_1 \cdot A_i\}_{i=1}^n.$$}
% \end{lem}

Given a matrix $A$ with entries in $\F[z]/\langle f(z)\rangle$ we want to construct a new matrix with entries only in $\F$ such that their $\F$-ranks are the same. First we state a simple fact about $\F[z]/\langle f(z)\rangle$.

\begin{fact}[Unique representation of elements in $\F [z\text{]}/\langle f(z)\rangle$]
Let $\F$ be a field and $f(z)$ a non-constant polynomial in $\F[z]$ of degree $d>0$. Every element in $\F[z]/\langle f(z)\rangle$ is uniquely represented by a polynomial in $\F[z]$ with degree strictly less than $d$ and conversely every degree strictly less than $d$ polynomial in $\F[z]$ is a unique element in $\F[z]/\langle f(z)\rangle$. When we refer to an element $h(z)\in \F[z]/\langle f(z)\rangle$ we also let it refer to the unique degree strictly less than $d$ polynomial it equals.
\end{fact}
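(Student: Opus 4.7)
The plan is to prove both directions of the correspondence by invoking the Euclidean division algorithm for $\F[z]$, which is available because $\F$ is a field and hence $\F[z]$ is a Euclidean domain with respect to the degree function. First I would establish existence of a degree-strictly-less-than-$d$ representative for each coset: given an arbitrary element of $\F[z]/\langle f(z)\rangle$ represented by some $g(z) \in \F[z]$, I apply the division algorithm to write $g(z) = q(z) f(z) + r(z)$ with either $r = 0$ or $\deg r < d$. Since $g - r \in \langle f(z)\rangle$, the polynomial $r$ lies in the same coset as $g$, which gives the desired representative.

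For uniqueness, suppose $r_1, r_2 \in \F[z]$ both have degree strictly less than $d$ and represent the same class in $\F[z]/\langle f(z)\rangle$. Then $f(z)$ divides $r_1(z) - r_2(z)$; but $\deg(r_1 - r_2) < d = \deg f$, and the only multiple of $f$ of degree smaller than $d$ is the zero polynomial, so $r_1 = r_2$. Combined with existence, this shows that the natural map from the $\F$-subspace $\{r \in \F[z] : \deg r < d\}$ into $\F[z]/\langle f(z)\rangle$ is a bijection, which is exactly the converse statement.

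There is no real obstacle here: the entire argument is a standard consequence of the division algorithm and fits in a few lines. The purpose of recording the fact is purely notational, since the identification fixes a canonical $\F$-basis $\{1, z, \ldots, z^{d-1}\}$ of $\F[z]/\langle f(z)\rangle$ which the subsequent material will use to ``unfold'' matrices with entries in the quotient into larger matrices with entries in $\F$ while preserving $\F$-rank.
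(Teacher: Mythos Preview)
Your proof is correct and is the standard Euclidean-division argument; the paper itself does not supply a proof of this fact, treating it as an elementary and well-known property of $\F[z]$ stated only to fix notation for the coefficient-matrix construction that follows.
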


\begin{define}[Coefficient matrix of $A$]
Let $\F$ be a field and $f(z)$ a non-constant polynomial in $\F[z]$ of degree $d>0$. Given any matrix $A$ of size $n_1\times n_2$ with entries in $\F[z]/\langle f(z)\rangle$ we can construct the {\em coefficient matrix of $A$} denoted by {\em $\Coeff(A)$} with entries in $\F$ which will be of size $dn_1\times n_2$ whose rows are labelled by elements in $\{0,\hdots,d-1\}\times [n_1]$ such that its $(i,j)$'th row is formed by the coefficients of $z^i$ of the polynomial entries of the $j$'th row of $A$. 
\end{define}

The key property about the coefficient matrix immediately follows from its definition.

\begin{fact}\label{lem:coeffR}
Let $\F$ be a field and $f(z)$ a non-constant polynomial in $\F[z]$ of degree $d>0$. Given any matrix $A$ with entries in $\F[z]/\langle f(z)\rangle$ and its coefficient matrix {\em $\Coeff(A)$} it is the case that an $\F$-linear combination of a subset of columns of $A$ is $0$ if and only if the corresponding $\F$-linear combination of the same subset of columns of {\em $\Coeff(A)$} is also $0$.

In particular, the $\F$-rank of $A$ equals the $\F$-rank of {\em $\Coeff(A)$}.
\end{fact}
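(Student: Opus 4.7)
The plan is straightforward: exploit the unique representation of elements in $\F[z]/\langle f(z)\rangle$ by degree-$<d$ polynomials, together with the observation that $\F$-linear combinations of such polynomials stay in this set (since the scalars carry no $z$). Once this is noted, the passage to the quotient becomes invisible for the kinds of relations we care about.

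First I would write each entry $a_{ij}(z)\in \F[z]/\langle f(z)\rangle$ as its canonical representative $\sum_{k=0}^{d-1} c_{ij,k}z^k$ with $c_{ij,k}\in \F$. By the definition of $\Coeff(A)$, the $((k,i),j)$ entry of $\Coeff(A)$ is exactly $c_{ij,k}$. The crucial observation is that, for scalars $\lambda_1,\ldots,\lambda_{n_2}\in \F$, the combination
$$\sum_j \lambda_j a_{ij}(z) \;=\; \sum_{k=0}^{d-1}\Bigl(\sum_j \lambda_j c_{ij,k}\Bigr)z^k$$
is again of degree strictly less than $d$. Hence it is zero in $\F[z]/\langle f(z)\rangle$ if and only if it is zero as a polynomial in $\F[z]$, which happens if and only if every coefficient $\sum_j \lambda_j c_{ij,k}$ vanishes.

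Next I would simply note that this last condition, quantified over all $i\in [n_1]$ and $k\in\{0,\ldots,d-1\}$, is precisely the statement that $\sum_j \lambda_j\, \Coeff(A)_j=0$, where $\Coeff(A)_j$ denotes the $j$-th column of $\Coeff(A)$. Therefore the $\F$-linear dependencies among the columns of $A$ and among those of $\Coeff(A)$ coincide. The rank equality is then immediate, since the $\F$-rank of a matrix with $n_2$ columns equals $n_2$ minus the dimension of the space of $\F$-linear relations among its columns.

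The only subtle step is the passage from ``zero in the quotient'' to ``zero as a polynomial''; it crucially depends on the combined polynomial remaining a canonical representative, which is automatic here because the scalars $\lambda_j$ lie in $\F$ rather than in $\F[z]/\langle f(z)\rangle$. This is exactly where $\F$-linearity (as opposed to $\F[z]/\langle f(z)\rangle$-linearity) is used, and it is the only place where one might slip; there is otherwise no real obstacle.
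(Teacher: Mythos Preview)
Your proposal is correct and is exactly the unpacking of what the paper means when it says this fact ``immediately follows from its definition'' (the paper does not give a separate proof). Your emphasis on the point that $\F$-scalars keep the representative in degree $<d$, so that vanishing in the quotient coincides with vanishing of all coefficients, is precisely the content behind the word ``immediately.''
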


We now need some simple properties related to the crank of tensor products. To that end we first define the tensor/Kronecker product of matrices.

\begin{define}[Kronecker Product of two matrices]
Given a commutative ring $R$ and two matrices $M_A$ and $M_B$ of sizes $n_1\times m_1$ and $n_2\times m_2$ corresponding to $R$-linear maps $A:R^{n_1}\rightarrow R^{m_1}$ and $B:R^{n_2}\rightarrow R^{m_2}$ respectively, we define the Kronecker product $M_A\otimes M_B$ as a matrix of size $n_1n_2\times m_1m_2$ with its rows indexed by elements in $[n_1]\times [n_2]$ and its columns indexed by elements in $[m_1]\times [m_2]$ such that
$$M_A\otimes M_B ((r_1,r_2),(c_1,c_2))=M_A(r_1,c_1)M_B(r_2,c_2),$$
where $r_1\in [n_1],r_2\in [n_2],c_1\in [m_1]$ and $c_2\in [m_2]$. $M_A\otimes M_B$ corresponds to the matrix of the $R$-linear map $A\otimes B: R^{n_1}\otimes R^{n_2}\cong R^{n_1n_2}\rightarrow R^{m_1}\otimes R^{m_2}\cong R^{m_1m_2}$.
\end{define}

We will need the following simple property of Kronecker products which follows from the corresponding property of the tensor product of linear maps.

\begin{fact}[Multiplication of Kronecker products]\label{fact:multOfKronecker}
Given matrices $A_1,A_2,B_1$ and $B_2$ of sizes $a_1\times n_1$, $a_2\times n_2$, $n_1\times b_1$ and $n_2\times b_2$ we have the following identity,
$$(A_1 \otimes A_2) \cdot (B_1\otimes B_2)=(A_1\cdot B_1)\otimes (A_2\cdot B_2).$$
\end{fact}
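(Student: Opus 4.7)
The plan is to prove the identity by a direct entry-wise computation, since both sides are matrices of the same size $a_1 a_2 \times b_1 b_2$ with rows indexed by $[a_1] \times [a_2]$ and columns indexed by $[b_1] \times [b_2]$. First I would fix $(i_1, i_2) \in [a_1] \times [a_2]$ and $(k_1, k_2) \in [b_1] \times [b_2]$, then expand the $((i_1,i_2),(k_1,k_2))$ entry of $(A_1 \otimes A_2)(B_1 \otimes B_2)$ via the usual matrix product formula, summing over inner indices $(j_1, j_2) \in [n_1] \times [n_2]$. Applying the definition of the Kronecker product to both factors rewrites the summand as $A_1(i_1, j_1) A_2(i_2, j_2) B_1(j_1, k_1) B_2(j_2, k_2)$.

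The key step is then to observe that the summand factors into a piece depending only on $(i_1, j_1, k_1)$ and a piece depending only on $(i_2, j_2, k_2)$, and that the index set is the full Cartesian product $[n_1] \times [n_2]$. Consequently the double sum splits as $\bigl(\sum_{j_1} A_1(i_1,j_1) B_1(j_1,k_1)\bigr) \bigl(\sum_{j_2} A_2(i_2,j_2) B_2(j_2,k_2)\bigr)$, which is exactly $(A_1 B_1)(i_1, k_1) \cdot (A_2 B_2)(i_2, k_2)$, i.e.\ the $((i_1,i_2),(k_1,k_2))$ entry of $(A_1 B_1) \otimes (A_2 B_2)$. Since the identity holds entrywise, the two matrices are equal.

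As a cleaner alternative, one could argue abstractly: the paper already identifies $M_A \otimes M_B$ with the tensor product $A \otimes B$ of $R$-linear maps on $R^{n_1} \otimes R^{n_2} \cong R^{n_1 n_2}$, so the identity reduces to showing $(A_1 \otimes A_2) \circ (B_1 \otimes B_2) = (A_1 B_1) \otimes (A_2 B_2)$ as linear maps. This is immediate on pure tensors $v_1 \otimes v_2$, since both sides send it to $(A_1 B_1 v_1) \otimes (A_2 B_2 v_2)$, and extends to all of $R^{b_1} \otimes R^{b_2}$ by $R$-bilinearity. There is no real obstacle in either approach; the only care needed is to match the row/column indexing conventions of the Kronecker product with the inner sum of the matrix product, and to note that the argument is valid over an arbitrary commutative ring $R$ since only distributivity and commutativity of multiplication are used in separating the double sum.
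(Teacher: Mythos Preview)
Your proposal is correct. The paper does not actually prove this fact; it simply asserts that it ``follows from the corresponding property of the tensor product of linear maps,'' which is precisely your second (abstract) approach, and your first entry-wise computation is a valid alternative verification.
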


We also need a simple fact about tensor product of a set of linearly independent rows with arbitrary vectors. 

\begin{fact}[Linear independence of Kronecker products]\label{fact-tenLin}
Let $k,n,m\in \N$. Given linearly independent vectors $r_1,r_2,\hdots,r_k\in \F^n$ then the set of vectors $r_i\otimes \F^m,i=[k]$ are mutually disjoint from each other outside of $0$. In particular, the union of a set of linearly independent vectors from each of $r_i\otimes \F^m,i=[k]$ gives us a linearly independent set of vectors.
\end{fact}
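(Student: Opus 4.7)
The plan is to identify $\F^n \otimes \F^m$ with the space $\F^{n\times m}$ of $n\times m$ matrices over $\F$ via the standard isomorphism under which an elementary tensor $u\otimes v$ corresponds to the rank-one outer product $uv^\top$. Under this identification, an arbitrary element of $r_i\otimes \F^m$ is exactly the matrix $r_i w_i^\top$ for some $w_i\in \F^m$, and the union $\bigcup_i r_i\otimes \F^m$ sits inside matrices of rank at most $k$.

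The heart of the argument is the following single linear-algebra claim: if $w_1,\ldots,w_k\in \F^m$ satisfy $\sum_{i=1}^k r_i\otimes w_i = 0$, then every $w_i$ is zero. To prove this, I would collect the $r_i$ into an $n\times k$ matrix $R$ whose $i$th column is $r_i$, and collect the $w_i$ into a $k\times m$ matrix $W$ whose $i$th row is $w_i^\top$. The sum $\sum_i r_i w_i^\top$ then equals the matrix product $R\,W$, so the hypothesis becomes $R\,W = 0$. Because $r_1,\ldots,r_k$ are linearly independent, $R$ has full column rank $k$ and therefore admits a left inverse $R^\dagger$; multiplying on the left gives $W = R^\dagger R W = 0$, so each $w_i=0$.

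From this single claim both conclusions follow immediately. First, if $x$ lies in $r_i\otimes \F^m$ and also in $r_j\otimes \F^m$ for $i\neq j$, writing $x=r_i\otimes w_i = r_j\otimes w_j$ gives $r_i\otimes w_i + r_j\otimes(-w_j)=0$, so $w_i=w_j=0$ and $x=0$; this yields the pairwise (in fact mutual) trivial intersection. Second, given a linearly independent subset $S_i\subset r_i\otimes \F^m$ for each $i\in [k]$, suppose some nontrivial $\F$-linear combination of the vectors in $\bigcup_i S_i$ equals zero. Grouping the combination by index $i$ produces an element $y_i\in r_i\otimes \F^m$ for each $i$ with $\sum_i y_i = 0$; writing $y_i = r_i\otimes w_i$, the claim forces $w_i=0$ and hence $y_i=0$. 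But $y_i=0$ means the sub-combination coming from $S_i$ vanishes inside $r_i\otimes \F^m$, which by the linear independence of $S_i$ forces all its coefficients to be zero, contradicting the assumption that the original combination was nontrivial.

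There is essentially no hard step here; the only content is recognizing the rank-$k$ matrix identity $\sum_i r_i w_i^\top = RW$ and using the left-invertibility of $R$. The only care needed is to phrase everything over an arbitrary field $\F$ (so one should speak of a left inverse, or equivalently of $R$ having a left inverse because the Gram matrix $R^\top R$ is invertible when $\F$ is large enough, but here the simpler argument is just that linearly independent columns can be extended to a basis and the inverse of the resulting change-of-basis matrix yields $R^\dagger$); no characteristic or size hypothesis on $\F$ is required.
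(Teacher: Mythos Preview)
Your argument is correct. The paper does not actually supply a proof of this statement --- it is listed as a preliminary fact and left unproven --- so there is nothing to compare your approach against. Your reduction to the matrix identity $\sum_i r_i w_i^\top = RW$ together with the left-invertibility of $R$ (via extension to a basis, which works over any field) is a clean way to obtain the key claim, and both conclusions follow from it exactly as you describe. The caveat you flag about the Gram-matrix argument failing over small fields is well taken; the extend-to-a-basis route you settle on is the right one.
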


% \begin{lem}\label{lem:crankTensor}
% Given matrices $A_1,\hdots,A_n$ of size $a_1\times a_2$ over a field $\F$ such that 
% {\em $$\crank\{A_i\}_{i=1}^n\ge r_1$$} 
% and matrices $B_{i,j}$ over the field $\F$ for $i\in [n]$ and $j\in [m]$ of size $b_1\times b_2$ such that 
% {\em $$\crank\{B_{i,j}\}_{j=1}^m\ge r_2$$}
% for all $i\in [n]$ we have,
% \em{$$\crank\{A_i\otimes B_{i,j}|i\in [n],j\in[m]\}\ge r_1r_2.$$}
% \end{lem}

% The above lemma follows easily from simple properties of the tensor product and a proof can be found in \cite{dhar2021proof}. We now prove a generalization for our setting. 

% \begin{lem}[crank bound for tensor products]\label{lem:crankTensorProduct}
% Let $\F$ be a field and $f(z)\in \F[z]$ a non-constant polynomial. Given matrices $A_1,\hdots,A_n$ of size $a_1\times a_2$ over the ring $\F[z]/\langle f(z)\rangle$ such that 
% {\em$$\crank_\F\{A_i\}_{i=1}^n\ge r_1$$} 
% and matrices $B_{i,j}$ over the field $\F$ for $i\in [n]$ and $j\in [m]$ of size $b_1\times b_2$ such that 
% {\em $$\crank_\F\{B_{i,j}\}_{j=1}^m\ge r_2$$}
% for all $i\in [n]$ we have,
% \em{$$\crank_\F\{A_i\otimes B_{i,j}|i\in [n],j\in[m]\}\ge r_1r_2.$$}
% \end{lem}
% Note the asymmetry between the matrices $A_i$ having entries in $\F[z]/\langle f(z)\rangle$ and the matrices $B_{i,j}$ only having entries in $\F$. 

\subsection{Polynomial Method on the complex torus}\label{sec:PreAr}
This section also includes preliminaries from \cite{DharGeneral} but written for a slightly more general set of parameters. In most cases the proofs are near identical as in Section 3 of \cite{DharGeneral}. Where the proofs are more complicated we give proofs in the appendix for completeness.

We first define the rings that we will work over.
\begin{define}[Rings $\overline{T}_{\ell}$ and $T_{\ell}^k$]\footnote{In \cite{DharGeneral} $\overline{T}_{\ell}$ is defined as $\F_p[z]/\langle z^{p^\ell}-1\rangle$. We will need this more general definition in this paper.}
We let 
$$\overline{T}_{\ell}=\F_p[z]/\langle (z-1)^\ell\rangle$$ 
and 
$$T_{\ell}^k=\Z(\zeta_{p^k})[z]/\langle (z-1)^\ell\rangle$$
where $\zeta_{p^k}$ is a primitive complex $p^k$'th root of unity.
\end{define}

In \cite{DharGeneral} $\ell$ is a power of $p$ but that is not necessary for the proofs to work. To prove maximal Kakeya bounds we need this extra control over the parameters.

We suppress $p$ in the notation as it will be fixed to a single value throughout our proofs. We also let $\zeta=\zeta_{p^k}$ throughout this section for ease of notation.

Note that $\Z(\zeta)$ is the ring $\Z[\zeta]/\langle \phi_{p^k}(\zeta)\rangle$ where,
\begin{align}
    \phi_{p^k}(x)=\frac{x^{p^k}-1}{x^{p^{k-1}}-1}=\sum\limits_{i=0}^{p-1} x^{ip^{k-1}}\label{eq:cyclo},
\end{align}
is the $p^k$ Cyclotomic polynomial. We will also work with the field $\Q(\zeta)=\Q[\zeta]/\langle \phi_{p^k}(\zeta)\rangle$. 

We need a simple lemma connecting $\Z(\zeta)$ to $\F_p$.

\begin{lem}[Quotient map $\psi_{p^k}$ from $\Z(\zeta)$ to $\F_p$]
The field $\F_p$ is isomorphic to $\Z(\zeta)/\langle p,\zeta-1\rangle$. In particular, the map $\psi_{p^k}$ from $\Z(\zeta)$ to $\F_p$ which maps $\Z$ to $\F_p$ via the mod $p$ map and $\zeta$ to $1$ is a ring homomorphism.
\end{lem}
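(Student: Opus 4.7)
The plan is to directly compute the quotient ring $\Z(\zeta)/\langle p,\zeta-1\rangle$ and verify it is isomorphic to $\F_p$; once this isomorphism is established, the map $\psi_{p^k}$ will automatically be a ring homomorphism as the composition of the natural quotient projection with this isomorphism.

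First I would unfold the definition $\Z(\zeta) = \Z[\zeta]/\langle \phi_{p^k}(\zeta)\rangle$ to rewrite
$$\Z(\zeta)/\langle p,\zeta-1\rangle \;\cong\; \Z[\zeta]/\langle \phi_{p^k}(\zeta),\,p,\,\zeta-1\rangle.$$
Next I would quotient by $\zeta-1$ first: the standard evaluation isomorphism $\Z[\zeta]/\langle \zeta-1\rangle \cong \Z$ (via $\zeta\mapsto 1$) sends $\phi_{p^k}(\zeta)$ to $\phi_{p^k}(1)$, giving
$$\Z[\zeta]/\langle \phi_{p^k}(\zeta),\,p,\,\zeta-1\rangle \;\cong\; \Z/\langle \phi_{p^k}(1),\,p\rangle.$$

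The key calculation is to evaluate $\phi_{p^k}(1)$ using the explicit formula \eqref{eq:cyclo}:
$$\phi_{p^k}(1) \;=\; \sum_{i=0}^{p-1} 1^{ip^{k-1}} \;=\; p.$$
Therefore $\langle \phi_{p^k}(1),p\rangle = \langle p\rangle$ inside $\Z$, and the final quotient is $\Z/p\Z = \F_p$. Composing the natural projection $\Z(\zeta)\to \Z(\zeta)/\langle p,\zeta-1\rangle$ with this isomorphism yields a ring homomorphism sending every $n\in \Z$ to $n \bmod p$ and $\zeta$ to $1$, which is precisely the description of $\psi_{p^k}$.

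No step in this plan is delicate; the only substantive content is the evaluation $\phi_{p^k}(1)=p$, which is immediate from the formula for the $p^k$-th cyclotomic polynomial displayed in the paper. The one thing to be careful about is the order in which the generators are killed (by $\zeta-1$ first, then by $p$, as opposed to the reverse) — both give the same quotient by the third isomorphism theorem, but the order above makes the reduction to $\Z/\langle p\rangle$ transparent and avoids having to separately argue that $\phi_{p^k}(\zeta)$ factors nicely in $\F_p[\zeta]$.
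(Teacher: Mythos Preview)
Your argument is correct: the computation $\phi_{p^k}(1)=p$ together with the evaluation isomorphism $\Z[\zeta]/\langle \zeta-1\rangle\cong\Z$ cleanly identifies the quotient with $\F_p$. The paper itself does not supply a proof of this lemma---it is listed among the preliminaries imported from \cite{DharGeneral} and stated without argument---so there is nothing to compare against; your write-up would serve perfectly well as the omitted verification.
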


We note $\psi_{p^k}$ can be extended to the rings $\Z(\zeta)[z]/\langle h(z)\rangle$ for any $h(x)\in \Z(\zeta)[x]$ by mapping $z$ to $z$. In particular, we also have the following.

\begin{cor}[Extending $\psi_{p^k}$]\label{cor:psiE}
Let $h(x)\in \Z(\zeta)[x]$. $\psi_{p^k}$ is a ring homomorphism from $\Z(\zeta)[z]/\langle h(z)\rangle$ to $\Z(\zeta)[z]/\langle h(z), p,\zeta-1\rangle=\F_p[z]/\langle \psi_{p^k}(h(z))\rangle$.
\end{cor}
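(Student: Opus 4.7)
The plan is to build the extension of $\psi_{p^k}$ in two steps: first extend coefficient-wise from $\Z(\zeta)$ to the polynomial ring $\Z(\zeta)[z]$, then descend to the quotient by $\langle h(z)\rangle$.

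First I would observe that sending $\sum_i a_i z^i \mapsto \sum_i \psi_{p^k}(a_i) z^i$ defines a ring homomorphism $\widetilde{\psi}_{p^k}: \Z(\zeta)[z]\to \F_p[z]$, because applying the preceding lemma's $\psi_{p^k}$ coefficient-wise respects the standard formulas for polynomial addition and multiplication. Next, for any $g(z)\in \Z(\zeta)[z]$ we have
\[
\widetilde{\psi}_{p^k}(g(z)\, h(z)) \;=\; \widetilde{\psi}_{p^k}(g(z))\cdot \widetilde{\psi}_{p^k}(h(z)) \;\in\; \langle \psi_{p^k}(h(z))\rangle,
\]
so $\widetilde{\psi}_{p^k}$ sends the ideal $\langle h(z)\rangle$ into $\langle \psi_{p^k}(h(z))\rangle$ and hence factors through the quotients to give a well-defined ring homomorphism
\[
\Z(\zeta)[z]/\langle h(z)\rangle \;\longrightarrow\; \F_p[z]/\langle \psi_{p^k}(h(z))\rangle,
\]
which is the claimed map.

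It remains to identify the target $\F_p[z]/\langle \psi_{p^k}(h(z))\rangle$ with $\Z(\zeta)[z]/\langle h(z),\, p,\, \zeta-1\rangle$. For this I would apply the quotients in the other order: first mod out $\Z(\zeta)[z]$ by $\langle p,\zeta-1\rangle$, which by the preceding lemma produces $\F_p[z]$, and then mod out by the image of $h(z)$, which is exactly $\psi_{p^k}(h(z))$. Since forming the quotient by the sum of two ideals is independent of the order, this gives the stated equality of rings, and the composite with the inclusion/quotient identifies $\psi_{p^k}$ on $\Z(\zeta)[z]/\langle h(z)\rangle$ with the map just constructed. There is no real obstacle here; the only thing to be a bit careful about is simply verifying that the coefficient-wise extension is compatible with the ideal $\langle h(z)\rangle$, which as shown above is immediate from multiplicativity.
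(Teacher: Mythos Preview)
Your argument is correct; the paper does not actually give a proof of this corollary, treating it as an immediate consequence of the preceding lemma (it only remarks afterward that $T_\ell^k/\langle p,\zeta-1\rangle\cong \overline{T}_\ell$ is a special case). Your write-up is exactly the standard verification the paper leaves implicit.
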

$T_{\ell}^k/\langle p,\zeta-1\rangle$ being isomorphic to $\overline{T}_{\ell}$ is a special case of the corollary.

We also need the following lemma proving rank relations under the quotient map $\psi_{p^k}$.

\begin{lem}\label{lem:quoRank}
Let $A$ be a matrix with entries in $T_{\ell}^k$ then we have the following bound,
{\em$$\rank_{\Q(\zeta)} A  \ge \rank_{\F_p} \psi_{p^k}(A),$$}
where $\psi_{p^k}(A)$ is the matrix with entries in $\overline{T}_{\ell}$ obtained by applying $\psi_{p^k}$ to each entry of $A$.  
\end{lem}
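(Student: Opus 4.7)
My plan is to reduce the inequality to a standard lower-semi-continuity of rank under specialization for matrices over the Dedekind domain $\Z(\zeta)$, by first passing through the coefficient matrix construction.

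First, I would expand each entry of $A$ in the $\Z(\zeta)$-basis $\{1, z, \ldots, z^{\ell-1}\}$ of $T_\ell^k = \Z(\zeta)[z]/\langle (z-1)^\ell\rangle$ to form $\Coeff(A)$, a matrix of size $\ell n_1 \times n_2$ with entries in $\Z(\zeta)$. After base-changing to $\Q(\zeta)$, Fact~\ref{lem:coeffR} applied over the field $\Q(\zeta)$ with $f(z) = (z-1)^\ell$ gives $\rank_{\Q(\zeta)} A = \rank_{\Q(\zeta)} \Coeff(A)$, where the right-hand side is the ordinary rank of the $\Z(\zeta)$-matrix viewed over its fraction field. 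Since $\psi_{p^k}$ fixes $z$ and restricts on $\Z(\zeta)$ to the residue map modulo the maximal ideal $\mathfrak{m} = (1-\zeta)$, the matrix $\Coeff(\psi_{p^k}(A))$ is the entry-wise image of $\Coeff(A)$ under this residue map; applying Fact~\ref{lem:coeffR} once more, now over $\F_p$, gives $\rank_{\F_p} \psi_{p^k}(A) = \rank_{\F_p} \overline{\Coeff(A)}$. The problem is therefore reduced to the following claim: for any matrix $M$ with entries in $\Z(\zeta)$, $\rank_{\Q(\zeta)} M \ge \rank_{\F_p} \overline{M}$, where the overline denotes reduction modulo $(1-\zeta)$.

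I would prove this reduced claim by contrapositive. Take any $r$ columns $C_1, \ldots, C_r$ of $M$ that are $\Q(\zeta)$-linearly dependent, say $\sum_i \alpha_i C_i = 0$ with $\alpha_i \in \Q(\zeta)$ not all zero. Scaling by a suitable nonzero integer we may assume $\alpha_i \in \Z(\zeta)$. Working in the DVR $\Z(\zeta)_{(1-\zeta)}$, which has uniformizer $1-\zeta$ and residue field $\F_p$ (recall that $(1-\zeta)$ is the unique prime above $p$ since $p$ totally ramifies in $\Z(\zeta_{p^k})$), let $m = \min_i v(\alpha_i)$ where $v$ is the $(1-\zeta)$-adic valuation, and set $\gamma_i = \alpha_i / (1-\zeta)^m$. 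Each $\gamma_i$ lies in the DVR, at least one is a unit, and $\sum_i \gamma_i C_i = 0$ continues to hold because $1-\zeta$ is a non-zero-divisor on the free $\Z(\zeta)$-module $\Z(\zeta)^{\ell n_1}$ containing the columns. Reducing modulo $(1-\zeta)$ yields a nontrivial $\F_p$-linear relation $\sum_i \overline{\gamma}_i \overline{C}_i = 0$, exhibiting $\overline{C}_1, \ldots, \overline{C}_r$ as $\F_p$-linearly dependent.

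The main obstacle is bookkeeping rather than mathematical depth: one must carefully translate between the module $T_\ell^k$ and the Dedekind domain $\Z(\zeta)$ via $\Coeff$, verify that reduction modulo $(1-\zeta)$ commutes with taking coefficients, and justify the change of base ring needed to invoke Fact~\ref{lem:coeffR} (which is stated for matrices over a polynomial ring over a field). Once this setup is in place, the core semi-continuity step is routine given the DVR structure of $\Z(\zeta)_{(1-\zeta)}$.
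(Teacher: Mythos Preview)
Your argument is correct. The paper itself does not supply a proof of this lemma (it is imported from \cite{DharGeneral} with the remark that the proof is ``near identical'' to the one there), so there is nothing to compare against directly; however, your approach is precisely the natural one suggested by the machinery the paper sets up: use $\Coeff$ to trade the $T_\ell^k$-matrix $A$ for a $\Z(\zeta)$-matrix, invoke Fact~\ref{lem:coeffR} on both sides (over $\Q(\zeta)$ and over $\F_p$), observe that reduction modulo $(1-\zeta)$ commutes with taking coefficients since $(z-1)^\ell$ has integer coefficients, and finish with the standard DVR argument that a $\Q(\zeta)$-dependence among columns can be normalized to one with a unit coefficient and then reduced modulo the maximal ideal. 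The only care needed, which you flag yourself, is the harmless base-change from $\Z(\zeta)[z]/\langle (z-1)^\ell\rangle$ to $\Q(\zeta)[z]/\langle (z-1)^\ell\rangle$ so that Fact~\ref{lem:coeffR} applies verbatim, and the identification of $\ker\psi_{p^k}$ on $\Z(\zeta)$ with the totally ramified prime $(1-\zeta)$ above $p$; both are routine.
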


We now define the Vandermonde matrices with entries in $\F_p[z]$ which was defined by Arsovski in \cite{arsovski2021padic} and whose $\F_p$-rank will help us lower bound the size of Kakeya Sets.
% \overline{T}_{\ell}
\begin{define}[The matrix $M_{m,n}$]
The matrix $M_{m,n}$ is a matrix over $\F_p[z]$ with its rows and columns indexed by points in $\{0,\hdots,m-1\}^n$. The $(u,v)\in \{0,\hdots,m-1\}^n\times \{0,\hdots,m-1\}^n$ entry is
$$M_{m,n}(u,v)=z^{\langle u, v\rangle}.$$
\end{define}

For a matrix $M$ with entries in $\F[z]$ we let $M (\text{mod }G(z))$ is the matrix with same entries but over the ring $\F[z]/\langle G(z)\rangle$. We let $M^\ell_{m,n}$ be the matrix $M_{m,n} (\text{mod }(z-1)^\ell)$. We will need the following bound on the $\F_p$-rank of these matrices.

\begin{lem}[Rank of $M^\ell_{m,n}$]\label{lem:1rank}
Let $m\ge \ell$ then the $\F_p$-rank of $M^\ell_{m,n}$ is at least 
{\em $$\text{rank}_{\F_p} M^\ell_{m,n} \ge \left\lceil\binom{\ell \lceil\log_p(\ell)\rceil^{-1} +n}{n}\right\rceil.$$}
\end{lem}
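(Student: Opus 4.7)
The plan is to exhibit a set of $\binom{L+n}{n}$ rows of $M^\ell_{m,n}$ that are $\F_p$-linearly independent, where $k=\lceil\log_p(\ell)\rceil$ and $L=\lceil\ell/k\rceil$. I index the selected rows by multi-indices $u\in\Z_{\ge 0}^n$ with $|u|\le L$, identifying the row $u$ with the monomial $x^u$. An $\F_p$-linear combination $\sum_u c_u(\mathrm{row}_u)$ then has $v$-th entry $P(z^{v_1},\ldots,z^{v_n})\bmod(z-1)^\ell$, where $P(x)=\sum_{|u|\le L}c_u\,x^u$. So linear independence reduces to the following claim: for every non-zero polynomial $P\in\F_p[x_1,\ldots,x_n]$ of total degree at most $L$, there exists $v\in\{0,\ldots,m-1\}^n$ with $P(z^{v_1},\ldots,z^{v_n})\not\equiv 0\pmod{(z-1)^\ell}$.

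After substituting $w=z-1$, the key algebraic input is the Frobenius/Lucas identity $(1+w)^{p^j}\equiv 1+w^{p^j}\pmod{p}$. Restricting to columns $v$ with each coordinate a pure prime power $v_i=p^{j_i}$, $j_i\in\{0,\ldots,k-1\}$ (all valid since $m\ge\ell>p^{k-1}$), and expanding via Hasse derivatives (Lemma~\ref{lem:chainRule}), the evaluation takes the clean form
\[
P(z^{v_1},\ldots,z^{v_n})\equiv\sum_{\bm s\in\Z_{\ge 0}^n}P^{(\bm s)}(1,\ldots,1)\,w^{\sum_i s_ip^{j_i}}\pmod{w^\ell}.
\]
Since $P\ne 0$ and $P(1+y)=\sum_{\bm s}P^{(\bm s)}(\bm 1)y^{\bm s}$, at least one Hasse derivative $P^{(\bm s^*)}(\bm 1)$ is non-zero, necessarily with $|\bm s^*|\le L$.

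I would then construct a collection $\{v^{(u)}\}$ indexed by the multi-indices $u$ with $|u|\le L$, and verify that the resulting $\binom{L+n}{n}\times\binom{L+n}{n}$ sub-matrix of $M^\ell_{m,n}$ is non-singular over $\F_p$. Ordering the $u$'s lexicographically and designing the base-$p$ digit locations $j_i$ of $v^{(u)}$ according to $u$, the intent is a triangular structure whose diagonal entries pick out the leading term $P^{(u)}(\bm 1)\,w^{D(u)}$ in some power $w^{D(u)}<w^\ell$. The numerical budget $L\le\ell/k$ matches exactly: spreading the $n$ variables across the $k$ available scales $p^0,\ldots,p^{k-1}$ keeps every relevant exponent $\sum_i s_ip^{j_i}$ strictly below $\ell$.

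The main obstacle I anticipate is cancellation between distinct multi-indices $\bm s$ collapsing to the same power of $w$: a naive injectivity choice for $\bm s\mapsto\sum_i s_ip^{j_i}$ demands spacing of order $\lceil\log_p(L+1)\rceil$ between consecutive $j_i$'s, which would drive the maximum exponent up to roughly $L^n$ and overshoot $\ell$ for $n\ge 2$. To sidestep this, I would either use multiple $v$'s in a coordinated Vandermonde-style pattern (so cancellations against one $v$ are invertibly detected by another) or induct on $n$, fixing $v_n$ and reducing to the $M^\ell_{m,n-1}$ problem against a polynomial of slightly smaller degree in $n-1$ variables. Either route should mirror the $\ell=p^k$ argument developed in~\cite{DharGeneral}, with the extra care needed only to accommodate a general $\ell$ that need not be a power of $p$.
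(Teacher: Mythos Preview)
Your outline reduces the question to the right polynomial statement, but it stops precisely at the real difficulty and never resolves it. The collision problem you flag---that distinct $\bm s$ with $|\bm s|\le L$ can yield the same exponent $\sum_i s_i p^{j_i}$ and hence cancel in $\F_p[w]/\langle w^\ell\rangle$---is not a technicality; it is the entire content of the lemma in dimension $n\ge 2$. Neither of your proposed escapes is carried out: the ``coordinated Vandermonde pattern'' is only a phrase, and the induction on $n$ would need a quantitative step (trading one variable for a controlled drop in degree while keeping the exponent budget below $\ell$) that you have not stated, let alone verified. As written this is a plan rather than a proof.

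The paper avoids the collision issue altogether by exploiting structure you have not used: $M^\ell_{m,n}=(M^\ell_{m,1})^{\otimes n}$, and $M^\ell_{m,1}$ is a Vandermonde matrix with an explicit LU factorization $V_m=L_mD_m$ over $\Z[z]$ (Arsovski's Lemma~5). Tensoring and reducing mod $(z-1)^\ell$ gives $M^\ell_{m,n}=\overline{L}_m^{\otimes n}\overline{D}_m^{\otimes n}$ with $\overline{L}_m^{\otimes n}$ invertible, so the $\F_p$-rank is at least the number of non-zero diagonal entries of the upper-triangular $\overline{D}_m^{\otimes n}$. Each such entry is a product $\prod_{t=1}^n D_m(j_t,j_t)$ with $D_m(j,j)=\prod_{i<j}(z^j-z^i)$, and Lucas's theorem shows that the $(z-1)$-adic valuation of $D_m(j,j)$ is at most $j\lceil\log_p\ell\rceil$. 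Hence every tuple with $j_1+\cdots+j_n\le L$ contributes a non-zero diagonal entry, giving the bound directly. The point is that working after the row operation $\overline{L}_m^{-1}$ separates the contributions of different multi-indices into distinct diagonal positions, so the cancellation you are fighting never arises.
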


For a given set of elements $V\subseteq \{0,\hdots,m-1\}^{n}$ let $M_{m,n}(V)$ refer to the sub-matrix obtained by restricting to rows of $M_{m,n}$ corresponding to elements in $V$ (similarly for $M^\ell_{m,n}$). In particular, for any given $u\in \{0,\hdots,m-1\}^n$ we let $M_{\ell,n}(u)$ refer to the $u$'th row of the matrix.

We first want to show the $\F_p$-rank of $M^\ell_{m,n}$ can be explained by the rows indexed by elements $u\in \{0,\hdots,m-1\}^n$ such that at least one coordinate is non-zero  modulo $p$. To that end we let $V_{m,p}$ represent the set of elements in $\{0,\hdots,m-1\}^n$ such that at least one co-ordinate is non-zero modulo $p$.

\begin{lem}\label{lem:rankp}
The row-space of $\Coeff(M^\ell_{m,n})$ equals the row-space of the sub-matrix $\Coeff(M^\ell_{m,n}(V_{m,p}))$.
\end{lem}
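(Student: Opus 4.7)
The plan is to prove that each row of $\Coeff(M^\ell_{m,n})$ indexed by a ``bad'' $u \in \{0,\ldots,m-1\}^n \setminus V_{m,p}$ is either the zero row or coincides with some row of $\Coeff(M^\ell_{m,n}(V_{m,p}))$. The reverse inclusion of row spaces is trivial since $M^\ell_{m,n}(V_{m,p})$ is a sub-matrix of $M^\ell_{m,n}$; so it suffices to establish this forward direction.

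The key input is the Frobenius identity in characteristic $p$. For any $u \notin V_{m,p}$, every coordinate of $u$ is divisible by $p$, so I can write $u = p^s u^*$ with $s \ge 1$ chosen maximal, which guarantees $u^* \in V_{m,p}$ and $u^* \in \{0,\ldots,m-1\}^n$. In $\F_p[z]$ the Frobenius endomorphism gives
$$z^{p^s} = \bigl(1+(z-1)\bigr)^{p^s} = 1 + (z-1)^{p^s},$$
and therefore in $\overline{T}_\ell = \F_p[z]/\langle (z-1)^\ell\rangle$ we have
$$z^{\langle u,v\rangle} = \bigl(1+(z-1)^{p^s}\bigr)^{\langle u^*,v\rangle} = \sum_{k\ge 0}\binom{\langle u^*,v\rangle}{k}(z-1)^{p^s k}.$$
Analogously, $z^{\langle u^*,v\rangle} = \sum_{k\ge 0}\binom{\langle u^*,v\rangle}{k}(z-1)^k$.

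Comparing the $(z-1)^i$-coefficients of these two expansions, the $(i,u)$-row of $\Coeff(M^\ell_{m,n})$ (whose $v$-entry is the coefficient of $(z-1)^i$ in $z^{\langle u,v\rangle}$) equals the zero row when $p^s \nmid i$, and equals the $(i/p^s, u^*)$-row of $\Coeff(M^\ell_{m,n})$ when $p^s \mid i$. In the latter case $i/p^s < \ell$, so this row does exist inside $\Coeff(M^\ell_{m,n}(V_{m,p}))$ (recall $u^* \in V_{m,p}$). This shows every row of $\Coeff(M^\ell_{m,n})$ lies in the row space of $\Coeff(M^\ell_{m,n}(V_{m,p}))$, giving equality of the two row spaces.

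There is no real obstacle here beyond being careful with the truncation modulo $(z-1)^\ell$ and checking that $i/p^s$ falls in the valid index range $\{0,\ldots,\ell-1\}$, which is automatic from $i<\ell$ and $s\ge 1$. The one place to be mildly vigilant is verifying that $u^*$ still has coordinates in $\{0,\ldots,m-1\}$, but this is immediate since dividing by $p^s \ge 1$ can only decrease coordinates.
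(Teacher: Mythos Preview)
Your argument is correct and essentially the same as the paper's: both observe that for $u = p^s u^*$ the $u$-row of $M^\ell_{m,n}$ arises from the $u^*$-row via the $\F_p$-linear substitution $Q(z)\mapsto Q(z^{p^s})$ on $\overline{T}_\ell$, with your version making this explicit in the $(z-1)$-adic expansion via Frobenius. One minor point: the paper's definition of $\Coeff$ uses the $z^i$-basis rather than the $(z-1)^i$-basis you work in, but this is immaterial since an invertible change of basis within each $\ell$-block preserves the row-space (and, like the paper, you tacitly omit the trivially handled case $u=0$, where no maximal $s$ exists).
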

\begin{proof}
Consider the matrix $\Coeff(M^\ell_{m,n})$. For any $u\in \{0,\hdots,m-1\}^n$, the $u$'th row in $M^\ell_{m,n}$ will correspond to a $p^\ell$ block of rows $B_u$ in $\Coeff(M^\ell_{m,n})$. 
Say $u$ doesn't have a non-zero coordinate modulo $p$. We can find an element $u' \in V_{m,p}$ such that for some $i$, $p^iu'=u$. We claim that the block $B_u$ can be generated by the block $B_{u'}$ via a linear map. This follows because given the coefficient vector of a polynomial $Q(z)$ in $\overline{T}_{\ell}$ the coefficient vector of the polynomial $Q(z^p)\in \overline{T}_\ell$ can be obtained via a $\F_p$-linear map.
\end{proof}

We need to consider row vectors which encode the evaluation of monomials and their derivatives over points on a line in direction $u\in (\Z/p^k\Z)^{n}$. 

\begin{define}[The evaluation vector $U_{d}^{(\balpha)}(y)$]
Let $\F$ be a field, $n,d\in \Z$ and $\balpha\in \Z_{\ge 0}^n$. For any given point $y\in \F^n$ we define $U_d^{(\balpha)}(y)$ to be a row vector of size $d^n$ whose columns are indexed by monomials $Q=x_1^{j_1}x_2^{j_2}\hdots x_n^{j_n}\in \F[x_1,\hdots,x_n]$ for $j_k\in \{0,\hdots,d-1\},k\in [n]$ such that its $Q$'th column is $Q^{(\balpha)}(y)$. 
\end{define}

Let $L\subseteq (\Z/p^k\Z)^n$ be a line in direction $u\in \P (\Z/p^k\Z)^{n-1}$. A special case of the next lemma proves that for any polynomial $f\in \Z[x_1,\hdots,x_n]$ we can evaluate $f(z^{u})\in \overline{T}_{p^k}=\F_p[z]/\langle (z-1)^{p^k}\rangle,z^{u}=(z^{u_1},\hdots,z^{u_n})$ from the evaluation of $f$ on the points $f(\zeta^x),x\in L$. Let $\pi: L \rightarrow \Z_{\ge 0}$ be a function on the line such that $\sum_{x\in L}\pi(x)\ge \ell$. The next lemma states that we can decode $f(z^{u'})\in \overline{T}_{\ell}=\F_p[z]/\langle (z-1)^{\ell}\rangle$ from the evaluations of weight at most $\pi(\zeta^x)$ Hasse derivatives of $f$ at $\zeta^x,x\in L$ for any $u'$ in $\Z^n$ such that $u'\text{ (mod }p^k\text{)}=u$.

The lemma below can be thought of as analogous to how over finite fields the evaluation of a polynomial and its Hasse derivatives with high enough weight along a line in direction $u$ can be used to decode the evaluation of that polynomial at the point at infinity along $u$~\cite{DKSS13}.

\begin{lem}[Decoding from evaluations on rich lines]\label{lem:deRich}
Let $L=\{a+\lambda u| \lambda \in \Z/p^k\Z\}\subset (\Z/p^k\Z)^n$ with $a\in (\Z/p^k\Z)^n,u\in \P (\Z/p^k\Z)^{n-1}$, $u'\in \Z^n$ be such that {\em $u'\text{ (mod }p^k\text{)}=u$} and $\pi:L\rightarrow \Z_{\ge 0}$ be a function which satisfies $\sum_{x\in L} \pi(x) \ge \ell$.

Then, there exists elements $c_{\lambda,\balpha} \in \Q(\zeta)[z]$ (depending on $\pi,L$ and $u'$) for $\lambda \in \Z/p^k\Z$ and $\balpha\in  \Z_{\ge 0}^n$ with $\text{wt}(\balpha)< \pi(a+\lambda u)$ such that the following holds for all polynomials $f\in \Z[x_1,\hdots,x_n]$,

{\em$$ \psi_{p^k}\left(\sum\limits_{\lambda =0}^{p^k-1}\sum\limits_{\text{wt}(\balpha) < \pi(a+\lambda u) }c_{\lambda,\balpha} f^{(\balpha)}(\zeta^{a+\lambda u})\right)= f(z^{u'})\in \overline{T}_\ell .$$}
\end{lem}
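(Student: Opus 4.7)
The plan is to reduce this multivariate decoding statement to univariate Hermite interpolation along the line parametrized by $\lambda$. I would introduce the univariate (Laurent) polynomial
$$ P(T) := f(\zeta^{a_1} T^{u_1'}, \hdots, \zeta^{a_n} T^{u_n'}) \in \Z(\zeta)[T, T^{-1}], $$
and observe that, because $\zeta^{p^k} = 1$ and $u' \equiv u \pmod{p^k}$, one has $P(\zeta^\lambda) = f(\zeta^{a + \lambda u})$ for each $\lambda \in \Z/p^k\Z$. More generally, expanding $P(T+s) = f(\zeta^a (T+s)^{u'})$ in $s$ via the multivariate Taylor formula yields an identity of the form
$$ P^{(j)}(T) = \sum_{\text{wt}(\balpha) \le j} f^{(\balpha)}(\zeta^a T^{u'}) \cdot g_{j, \balpha}(T), $$
with explicit $g_{j, \balpha}(T) \in \Z(\zeta)[T, T^{-1}]$ depending only on $u', j, \balpha$ (built from the binomials $\binom{u_i'}{\cdot}$ and powers of $T^{-1}$). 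Evaluating at $T = \zeta^\lambda$ then expresses $P^{(j)}(\zeta^\lambda)$ as a $\Z(\zeta)$-linear combination of the values $f^{(\balpha)}(\zeta^{a+\lambda u})$ with $\text{wt}(\balpha) \le j$.

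Next, I would set $N := \sum_{\lambda = 0}^{p^k - 1} \pi(a + \lambda u) \ge \ell$ and
$$ Q(T) := \prod_{\lambda = 0}^{p^k - 1} (T - \zeta^\lambda)^{\pi(a + \lambda u)} \in \Z(\zeta)[T], $$
a monic polynomial of degree $N$ whose constant term is a unit in $\Z(\zeta)$. Dividing the Laurent polynomial $P(T)$ by the monic $Q(T)$ yields a well-defined remainder $H(T) \in \Z(\zeta)[T]$ with $\deg H < N$ and $P \equiv H \pmod{Q}$ in $\Z(\zeta)[T, T^{-1}]$. Since $(T - \zeta^\lambda)^{\pi(a + \lambda u)}$ divides $Q$, this $H$ satisfies $H^{(j)}(\zeta^\lambda) = P^{(j)}(\zeta^\lambda)$ for every $j < \pi(a + \lambda u)$, and by uniqueness of Hermite interpolation over the field $\Q(\zeta)$ we can write
$$ H(T) = \sum_{\lambda,\; j < \pi(a + \lambda u)} P^{(j)}(\zeta^\lambda) \cdot h_{\lambda, j}(T), $$
where the Hermite basis polynomials $h_{\lambda, j}(T) \in \Q(\zeta)[T]$ depend only on $L$ and $\pi$.

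To finish, I would substitute $T = z$ and apply $\psi_{p^k}$. Because $\psi_{p^k}(T - \zeta^\lambda) = T - 1$, one gets $\psi_{p^k}(Q(z)) = (z - 1)^N$, which vanishes modulo $(z-1)^\ell$, so in $\overline{T}_\ell$ we obtain $\psi_{p^k}(P(z)) \equiv \psi_{p^k}(H(z))$. The left-hand side equals $f(z^{u'})$, since $\psi_{p^k}$ sends $\zeta \mapsto 1$ and $f \in \Z[x_1, \hdots, x_n]$. On the right, plugging the Taylor identity of the first step into the Hermite expansion of $H$ rearranges the sum into
$$ H(z) = \sum_{\lambda,\; \text{wt}(\balpha) < \pi(a+\lambda u)} f^{(\balpha)}(\zeta^{a+\lambda u}) \cdot c_{\lambda, \balpha}(z), $$
with $c_{\lambda, \balpha}(z) \in \Q(\zeta)[z]$ assembled from the $h_{\lambda, j}$, the Taylor coefficients $g_{j, \balpha}(\zeta^\lambda)$, and the factors $\zeta^{\langle \balpha, a + \lambda u\rangle}$. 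These $c_{\lambda, \balpha}$ depend only on $\pi, L, u'$, which is what the lemma asserts.

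The main technical obstacle is that the individual Hermite basis polynomials $h_{\lambda, j}$ typically have denominators divisible by $p$: the differences $\zeta^\lambda - \zeta^{\lambda'}$ all lie in the unique prime $(\zeta - 1)$ of $\Z(\zeta)$ above $p$, so $\psi_{p^k}$ cannot be applied termwise to the Hermite expansion. The resolution is to interpret $H(z)$ not termwise but as the integral remainder of polynomial division of $P$ by the monic $Q$: $H$ itself lies in $\Z(\zeta)[z]$, so $\psi_{p^k}(H(z))$ is unambiguously defined even though the $\Q(\zeta)[z]$-linear decomposition into Hermite basis pieces involves $p$ in the denominators of the individual summands.
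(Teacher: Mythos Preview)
Your proposal is correct and follows essentially the same route as the paper's proof: restrict $f$ to the one-parameter family $P(T)=f(\zeta^a T^{u'})$, use the chain rule for Hasse derivatives to express $P^{(j)}(\zeta^\lambda)$ as a $\Q(\zeta)$-combination of the $f^{(\balpha)}(\zeta^{a+\lambda u})$, invoke Hermite interpolation in $\Q(\zeta)[T]/\langle Q(T)\rangle$ with $Q(T)=\prod_\lambda (T-\zeta^\lambda)^{\pi(a+\lambda u)}$, and then apply $\psi_{p^k}$ using $\psi_{p^k}(Q)=(z-1)^N$. The paper does the same, except it first reduces by $\Z$-linearity to monomials $Q_v$ (so that the univariate restriction is simply $y\mapsto \zeta^{\langle a,v\rangle}y^{\langle v,u'\rangle}$) and packages the interpolation step as Corollary~\ref{cor:eval}; your direct treatment of general $f$ and explicit discussion of the Laurent issue (handled because $Q(0)$ is a unit, equivalently $z$ is a unit in $\overline{T}_\ell$) are harmless variations. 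One cosmetic slip: your $g_{j,\balpha}(T)$ also carries the factor $\zeta^{\langle a,\balpha\rangle}$, so it depends on $a$ (which is part of $L$) and not only on $u',j,\balpha$; this does not affect the argument.
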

For the application of $\psi_{p^k}$ in the statement of the lemma to make sense we must have its input be an element in $T_\ell^k$. In other words, we need the input of $\psi_{p^k}$ to be a polynomial in $z$ with coefficients in $\Z(\zeta)$. This is indeed the case. This lemma is basically Lemma 3.11 from \cite{DharGeneral} and the proof is near identical. For completeness we give the proof in the appendix.

We will need this Lemma in the form of the following Corollary.

\begin{cor}[Decoding $M^\ell_{m,n}$ from $U^{(\balpha)}_{m}$]\label{cor:decoding}
Let $L=\{a+\lambda u| \lambda \in \Z/p^k\Z\}\subset (\Z/p^k\Z)^n$ with $a\in (\Z/p^k\Z)^n,u\in \P (\Z/p^k\Z)^{n-1}$, $u'\in \Z^n$ be such that {\em $u'\text{ (mod }p^k\text{)}=u$} and $\pi:L\rightarrow \Z_{\ge 0}$ be a function which satisfies $\sum_{x\in L} \pi(x) \ge \ell$ then there exists a $\Q(\zeta)[z]$-linear combination (with coefficients depending on $\pi,L$ and $u'$) of the vectors $U_{m}^{(\balpha)}(\zeta^x)$ for $x\in L$ and $\balpha$ of weight strictly less than $\pi(x)$ which under the map $\psi_{p^k}$ gives us the vector $M^\ell_{m,n}(u')$.

This also implies that there exists for each $i=0,\hdots,\ell-1$ many $\Q(\zeta)$-linear combinations of $U_{m}^{(\balpha)}(\zeta^x)$ for $x\in L$ and $\balpha<\pi(x)$ such that under the map $\psi_{p^k}$ we get the $i$th row of {\em $\Coeff(M^\ell_{m,n}(u'))$}.
\end{cor}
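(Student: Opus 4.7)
The plan is to apply Lemma~\ref{lem:deRich} entry by entry, using monomial test functions. The crucial feature of that lemma is that the coefficients $c_{\lambda,\balpha}$ depend only on $\pi$, $L$, and $u'$, and \emph{not} on the polynomial $f$ being decoded; this will let us handle every column of $M^\ell_{m,n}(u')$ simultaneously with a single linear combination of the vectors $U^{(\balpha)}_m(\zeta^x)$.

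Concretely, I will fix the $c_{\lambda,\balpha}\in\Q(\zeta)[z]$ produced by Lemma~\ref{lem:deRich} for the given $L,\pi,u'$. For each $v\in\{0,\dots,m-1\}^n$, applying the lemma to the monomial $f(x)=x^{v}=x_1^{v_1}\cdots x_n^{v_n}$ gives
\[
\psi_{p^k}\!\Bigl(\sum_{\lambda=0}^{p^k-1}\sum_{\text{wt}(\balpha)<\pi(a+\lambda u)} c_{\lambda,\balpha}\,(x^{v})^{(\balpha)}(\zeta^{a+\lambda u})\Bigr)\;=\;(x^{v})(z^{u'})\;=\;z^{\langle v,u'\rangle}\pmod{(z-1)^\ell}.
\]
Now observe that the $v$-th entry of the row vector $U^{(\balpha)}_m(\zeta^{a+\lambda u})$ is by definition $(x^{v})^{(\balpha)}(\zeta^{a+\lambda u})$, and that the $v$-th entry of $M^\ell_{m,n}(u')$ is exactly $z^{\langle v,u'\rangle}\bmod (z-1)^\ell$. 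Thus the single $\Q(\zeta)[z]$-linear combination $\sum_{\lambda,\balpha}c_{\lambda,\balpha}\,U^{(\balpha)}_m(\zeta^{a+\lambda u})$, read off coordinate-wise, lands (after $\psi_{p^k}$) on $M^\ell_{m,n}(u')$; this is the first conclusion.

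For the second conclusion, I will split the above $\Q(\zeta)[z]$-combination according to powers of $z$. After reducing each $c_{\lambda,\balpha}$ modulo $(z-1)^\ell$ to its unique representative of $z$-degree $<\ell$, write $c_{\lambda,\balpha}=\sum_{i=0}^{\ell-1}c_{\lambda,\balpha}^{(i)}z^{i}$ with $c_{\lambda,\balpha}^{(i)}\in\Q(\zeta)$. Since $\psi_{p^k}$ is the identity on $z$ and acts only on the $\Z(\zeta)$-scalars, the $z^{i}$-coefficient of the image in $\overline{T}_\ell$ equals $\psi_{p^k}\bigl(\sum_{\lambda,\balpha}c_{\lambda,\balpha}^{(i)}\,U^{(\balpha)}_m(\zeta^{a+\lambda u})\bigr)$. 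By the very definition of $\Coeff$, this is the $i$-th row of $\Coeff(M^\ell_{m,n}(u'))$, and each of the $\ell$ rows is exhibited as a $\Q(\zeta)$-linear combination of the prescribed evaluation vectors.

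There is no real technical obstacle: the corollary is a reformulation of Lemma~\ref{lem:deRich} in the language of evaluation-vector rows and the coefficient matrix. The only points that need care are (i) confirming that the $c_{\lambda,\balpha}$ are independent of the test monomial $f=x^{v}$, so that a single combination works uniformly across all columns indexed by $v$, and (ii) making sure the bookkeeping between the $\Q(\zeta)[z]$-combination (first claim) and the $\Q(\zeta)$-combinations per power of $z$ (second claim) matches the chosen basis $\{1,z,\dots,z^{\ell-1}\}$ used in the definition of $\Coeff$; both are immediate from the setups of Lemma~\ref{lem:deRich} and the definition of the coefficient matrix.
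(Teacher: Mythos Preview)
Your proposal is correct and matches the paper's intended derivation: the corollary is stated without a separate proof and is meant to follow immediately from Lemma~\ref{lem:deRich} by applying it to each monomial $x^{v}$ with $v\in\{0,\dots,m-1\}^n$, exactly as you do, using that the coefficients $c_{\lambda,\balpha}$ are uniform in $f$. Your splitting into $z^i$-coefficients for the second part is also the intended reading, and the only delicate point---that each resulting $\Q(\zeta)$-combination actually lands in $\Z(\zeta)$ so that $\psi_{p^k}$ applies---is covered by the remark after Lemma~\ref{lem:deRich} that the full sum lies in $T_\ell^k$.
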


To prove maximal bounds we will need to treat $M^\ell_{m,n}$ over $\overline{T}_\ell$ as a sub-matrix of $M^{\ell+1}_{m,n}$ over $\overline{T}_{\ell+1}$ in some sense. We make this precise now.

\begin{lem}
The row-space over $\F_p$ of {\em$\Coeff(M_{\ell,n})$} is a sub-space of the row-space over $\F_p$ of {\em $\Coeff(M_{\ell+1,n})$}. 
\end{lem}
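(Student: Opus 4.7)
The plan is a change-of-basis argument. The ring $\F_p[z]/\langle (z-1)^\ell\rangle$ admits two natural $\F_p$-bases: the \emph{standard} basis $\{1,z,\ldots,z^{\ell-1}\}$, which is what $\Coeff$ uses by definition, and the \emph{centered} basis $\{1,(z-1),\ldots,(z-1)^{\ell-1}\}$. I will introduce an auxiliary coefficient matrix $\Coeff'$ built with the centered basis and then establish two claims: (a) the desired row-space inclusion is transparent when working with $\Coeff'$, and (b) $\Coeff(A)$ and $\Coeff'(A)$ have identical $\F_p$ row-spaces for any $A$.

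For (a), the key computation is the binomial expansion
\[z^{\langle u,v\rangle} \;=\; (1+(z-1))^{\langle u,v\rangle} \;=\; \sum_{j\ge 0}\binom{\langle u,v\rangle}{j}(z-1)^j,\]
so reducing modulo $(z-1)^\ell$ or $(z-1)^{\ell+1}$ simply truncates this sum. Consequently, for each $j\in\{0,\ldots,\ell-1\}$ and each row index $u$, the $(j,u)$-row of $\Coeff'(M^\ell_{m,n})$ is the vector $v\mapsto\binom{\langle u,v\rangle}{j}\in\F_p$, which is identical to the $(j,u)$-row of $\Coeff'(M^{\ell+1}_{m,n})$. Hence $\Coeff'(M^\ell_{m,n})$ is literally obtained from $\Coeff'(M^{\ell+1}_{m,n})$ by deleting the rows indexed by $j=\ell$, so the row-space inclusion is immediate at the primed level.

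For (b), the change of basis from $\{(z-1)^j\}$ to $\{z^j\}$ on $\F_p[z]/\langle (z-1)^\ell\rangle$ is given by an invertible upper-triangular $\ell\times\ell$ matrix $T_\ell$ with signed binomial coefficient entries. Applied entry-by-entry to a matrix $A$, this realizes the passage from $\Coeff'(A)$ to $\Coeff(A)$ as an $\F_p$-linear transformation that is block-diagonal, with one copy of $T_\ell$ per row of $A$ (after appropriate reordering), and is invertible because $T_\ell$ is. An invertible linear transformation preserves row-spaces, giving (b). Chaining (b)-at-level-$\ell$, (a), and (b)-at-level-$(\ell+1)$ completes the proof. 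There is no real obstacle here; the only bookkeeping point is that $T_\ell$ and $T_{\ell+1}$ have different sizes, but this causes no difficulty because the final row-space containment lives in the common ambient space $\F_p^{m^n}$.
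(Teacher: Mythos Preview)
Your proof is correct and rests on the same underlying observation as the paper's: the natural quotient map $\F_p[z]/\langle (z-1)^{\ell+1}\rangle \to \F_p[z]/\langle (z-1)^{\ell}\rangle$ is $\F_p$-linear on coefficients, so each row of $\Coeff(M^\ell_{m,n})$ is an $\F_p$-linear combination of rows of $\Coeff(M^{\ell+1}_{m,n})$. The paper states exactly this in one line (``the coefficients of $Q$ modulo $(z-1)^\ell$ can be computed as a linear combination of the coefficients of $Q$ modulo $(z-1)^{\ell+1}$, because $(z-1)^\ell$ divides $(z-1)^{\ell+1}$''), without writing down the linear map explicitly.

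Your route is the same idea made concrete: you pass to the centered basis $\{(z-1)^j\}$, where the quotient map becomes literal truncation (so $\Coeff'(M^\ell_{m,n})$ is visibly a submatrix of $\Coeff'(M^{\ell+1}_{m,n})$), and then transport the conclusion back via the invertible change-of-basis matrices $T_\ell$ and $T_{\ell+1}$. This buys you an explicit description of the rows --- the $(j,u)$-row is $v\mapsto \binom{\langle u,v\rangle}{j}\bmod p$ --- which is pleasant but not needed here. The paper's version is shorter because it bypasses the detour through $\Coeff'$ and simply observes that reduction modulo a divisor is always linear on coefficient vectors.
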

\begin{proof}
This follows from the fact the coefficients of $Q$ modulo $(z-1)^\ell$ can be computed as a linear combination of the coefficients of $Q$ modulo $(z-1)^{\ell+1}$. This is because $(z-1)^\ell$ is a factor of $(z-1)^{\ell+1}$. 
\end{proof}

The previous lemma easily implies the next corollary using simple linear algebra and Lemmas~\ref{lem:1rank} and \ref{lem:rankp}.
\begin{cor}\label{cor:splitM}
We have a set $A=A_1\cup\hdots\cup A_{\ell}$ of linearly independent row vectors in the row space of {\em $\Coeff(M^\ell_{m,n})$} such that
\begin{enumerate}
  \item $$|A|=\left\lceil\binom{\ell \lceil\log_p(\ell)\rceil^{-1} +n}{n}\right\rceil,$$
    \item elements in $A_i$ are a subset of the rows of  {\em$\Coeff(M^i_{m,n}(V_{m,p}))$} and do note belong in the row space of  {\em$\Coeff(M^{i-1}_{m,n})$},
    \item and 
    $$|A_i|=\left\lceil\binom{i \lceil\log_p(i)\rceil^{-1} +n}{n}\right\rceil-\left\lceil\binom{(i-1) \lceil\log_p(i)\rceil^{-1} +n}{n}\right\rceil$$
    for $i\in [\ell]$.
\end{enumerate}
\end{cor}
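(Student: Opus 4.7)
The plan is to construct the sets $A_1,\ldots,A_\ell$ inductively by peeling a basis of $\text{rowspace}_{\F_p}(\Coeff(M^\ell_{m,n}))$ off one filtration level at a time. Write $R_i := \text{rowspace}_{\F_p}(\Coeff(M^i_{m,n}))$. Three earlier facts will do almost all of the work: the preceding lemma supplies the inclusion chain $R_0\subseteq R_1\subseteq\cdots\subseteq R_\ell$; Lemma~\ref{lem:rankp} lets us replace $R_i$ by the row span of the smaller sub-matrix $\Coeff(M^i_{m,n}(V_{m,p}))$; and Lemma~\ref{lem:1rank} provides the dimension lower bound $\dim R_i\ge\lceil\binom{i\lceil\log_p(i)\rceil^{-1}+n}{n}\rceil$.

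With these in hand, I would argue by induction on $i$. At the base, pick $A_1$ to be a linearly independent family of rows of $\Coeff(M^1_{m,n}(V_{m,p}))$ of the prescribed size; this is possible by Lemmas~\ref{lem:1rank} and~\ref{lem:rankp} applied at level $1$. For the inductive step, assume that $A_1\cup\cdots\cup A_{i-1}$ has been chosen, is linearly independent, and lies in $R_{i-1}$ with each $A_j\subseteq\text{rows}(\Coeff(M^j_{m,n}(V_{m,p})))\setminus R_{j-1}$. Because $R_{i-1}\subseteq R_i$ and $R_i$ is generated (by Lemma~\ref{lem:rankp}) by rows of $\Coeff(M^i_{m,n}(V_{m,p}))$, any linearly independent extension of $A_1\cup\cdots\cup A_{i-1}$ to a basis of $R_i$ can be drawn from those rows; choose $A_i$ to be the first $\left\lceil\binom{i\lceil\log_p(i)\rceil^{-1}+n}{n}\right\rceil-\left\lceil\binom{(i-1)\lceil\log_p(i)\rceil^{-1}+n}{n}\right\rceil$ such vectors that lie outside $R_{i-1}$. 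By construction each element of $A_i$ lies in $\text{rows}(\Coeff(M^i_{m,n}(V_{m,p})))\setminus R_{i-1}$, so conditions (2) and (3) are met and the running union remains linearly independent.

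The step that needs real care is to verify that the prescribed $|A_i|$ vectors actually exist at each stage, i.e.\ that $\dim R_i-\dim R_{i-1}$ is at least the difference $\left\lceil\binom{i\lceil\log_p(i)\rceil^{-1}+n}{n}\right\rceil-\left\lceil\binom{(i-1)\lceil\log_p(i)\rceil^{-1}+n}{n}\right\rceil$. The right way to do the bookkeeping is level by level, where a level $k$ is the interval of indices $i$ with $\lceil\log_p(i)\rceil=k$ constant; inside a single level the denominator in the binomial formula does not change, so the sum $\sum|A_j|$ telescopes exactly to the rank lower bound from Lemma~\ref{lem:1rank}, and when we cross from level $k-1$ to level $k$ the monotonicity $\dim R_{i-1}\le\dim R_i$ together with the strictly larger lower bound at level $k$ gives the needed slack. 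Summing over all levels yields $|A|=\left\lceil\binom{\ell\lceil\log_p(\ell)\rceil^{-1}+n}{n}\right\rceil$, which is condition (1).

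The main obstacle, therefore, is not conceptual but purely combinatorial: matching the ceilinged binomial sizes across the jump points $i=p^{k-1}+1$ where $\lceil\log_p(i)\rceil$ increases. I would handle this by carrying the inductive hypothesis $|A_1\cup\cdots\cup A_{i-1}|\le\left\lceil\binom{(i-1)\lceil\log_p(i)\rceil^{-1}+n}{n}\right\rceil$ through the argument within each level, and using the containment $R_{p^{k-1}}\subseteq R_{p^{k-1}+1}$ plus Lemma~\ref{lem:1rank} at level $k$ to reset the hypothesis cleanly at each boundary. Everything else is straightforward linear algebra.
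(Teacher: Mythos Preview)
Your approach is essentially the same as the paper's: the paper gives no detailed proof, stating only that the corollary follows by ``simple linear algebra'' from the preceding lemma (the nesting $R_0\subseteq R_1\subseteq\cdots\subseteq R_\ell$), Lemma~\ref{lem:1rank} (the rank lower bound on each $R_i$), and Lemma~\ref{lem:rankp} (which lets one draw the generators from rows indexed by $V_{m,p}$). Your greedy level-by-level extension is exactly the natural way to flesh this out, and you correctly identify that the only delicate point is the bookkeeping at the jumps of $\lceil\log_p(i)\rceil$, which the paper does not spell out either.
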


In the above corollary, for the purposes of our arguments, $A_i$ in some sense encapsulates the linearly independent evaluations of degree $i$ monomials when we embed $\Z/p^k\Z$ over the complex torus and use the $\psi_{p^k}$ operation. This kind of decomposition is important for us to get Maximal Kakeya bounds.
\section{Warm Up: Maximal Kakeya bounds over finite fields}\label{sec-field}
In this section we prove Theorem \ref{thm-field}. We restate it here.
\begin{repthm}{thm-field}[Maximal Kakeya bounds over finite fields]
Let $n>0$ be an integer, $q$ a prime power. For any function $f:\F_q^n\rightarrow \C$ we have the following bound,
$$\sum\limits_{x\in \F_q^n} |f(x)|^n \ge \frac{1}{(2-1/q)^n} |\P \F_q^{n-1}|^{-1} \left(\sum\limits_{u\in \P \F_q^{n-1}} |f^*(u)|^n\right).$$
\end{repthm}
By a limiting and scaling argument we note that it suffices to prove the statement above only for natural number valued functions.

We first give a brief sketch for a weaker bound for the setting where $f$ is the indicator function of a set to give intuition. The proof will be a direction generalization of the proof of the Kakeya set problem via the polynomial method. First let us describe the proof in~\cite{dvir2009size} for the Kakeya set problem. That paper corresponds to the setting where the function $f:\F_q^n\rightarrow \N$ is the indicator function of a set $S\subseteq \F_q^n$ such that $f^*$ is always $q$. That is $S$ contains a line in every direction. 

Dvir's proof works by assuming the set has size smaller than $\binom{q-1+n}{n}\ge q^n/n!$ which is the number of degree at most $q-1$ monomials in $n$ variables. This means we can find a non-zero linear combination of these monomials to construct a polynomial $Q$ of degree at most $q-1$ which vanishes on the entirety of $S$. As $S$ contains a line in each direction $Q$ can be restricted on these lines and will vanish on $q$ points while having degree at most $q-1$. By elementary algebra we can observe that the highest degree part of $Q$ vanishes on every direction. This leads to a contradiction as a homogenous polynomial of degree at most $q-1$ can not vanish over every direction by the Schwartz-Zipple lemma~\cite{schwartz1979probabilistic,ZippelPaper}.

We modify this proof using Lemma~\ref{lem-goodmono} to find a more suitable polynomial. We now start out with an arbitrary set $S$ and let $f$ be its indicator function. Clearly, $f^*\le q$. Let 
$$B_{\ge k}=\{u\in \P \F_q^{n-1}| f^*(u)\ge k\},$$ 
in other words it is the set of directions for which there exists a line in that direction with intersection at least $k$ with $S$. We also let $\epsilon_{\ge k}=|B_{\ge k}|/|\P \F_q^{n-1}|$. 

We will prove that,
\begin{equation}\label{eq-setfield}
|S| \ge \sum\limits_{k=0}^{q-1} \epsilon_{\ge k+1}\binom{k+n-1}{n-1}.
\end{equation}
Rearranging the RHS will show that the above inequality gives us the desired bounds but with worse constants. For each $0\le k\le q-1$ we can find a set of monomials $P(k)$ of degree $k$ such that any of their linear combinations do not vanish over $B_{\ge k+1}$.

If we say \eqref{eq-setfield} does not hold then we can find a non-zero polynomial $Q$ formed by taking a linear combination of monomials from $\bigcup_{k=0}^{q-1} P(k)$ which vanishes on $S$. Say $Q$ is of degree $d$. Then consider the set of directions $B_{\ge d+1}$. For each direction in $B_{\ge d+1}$ there exists a line in that direction such that $Q$ vanishes on $d+1$ of its points. As $Q$ is of degree $d$ again by elementary algebra we can say that the degree $d$ part of $Q$ then must vanish on the direction vector. But the degree $d$ part of $Q$ is a linear combination of monomials from $P(d)$ and hence can't vanish on the entirety of $B_{\ge d+1}$ leading to a contradiction.

The general statement is proven by letting derivatives also vanish and a more careful analysis to get better constants.

\begin{proof}
 Let $f: \F_q^n\rightarrow \N$ and we define $B_{\ge k}=\{u||f^*(u)|\ge k,u\in \P \F_q^{n-1}\}$ and $B_{k}=\{u||f^*(u)|=k,u\in \P \F_q^{n-1}\}$. In other words, $B_{\ge k}$ is the set of directions where the maximal function is at least $k$ (similarly for $B_k$). We also let $w=\max_u f^*(u)$. For $m\in \N$ and $0\le k\le w$, let
$$\epsilon_{\ge k}=|B_{\ge k}|/|\P \F_q^{n-1}|,$$
$$\epsilon_{k}=|B_k|/|\P \F_q^{n-1}|$$
$$r_k = \left\lceil \frac{mk+1}{2q-1} -1\right\rceil,$$
and
$$d_k=r_kq-1.$$
By definition, $\epsilon_{\ge k}=\sum\limits_{i=k}^w \epsilon_k$. We also note that if $m\ge 2q$ then $0=r_0<r_1<r_2<\hdots<r_w$. The same also holds for $-1=d_0<d_1<\hdots<d_w$.

We will prove the following inequality for all $m\ge 2q$,
\begin{equation}\label{eq-field}
    \sum\limits_{i=1}^{w} \epsilon_{\ge i} \sum\limits_{j=d_{i-1}+1}^{d_i}\binom{j+n-1}{n-1} \le \sum\limits_{x\in \F_q^n} \binom{m f(x)+n-1}{n}.
\end{equation}
We claim \eqref{eq-field} proves the Theorem. Let us prove that first. The LHS of \eqref{eq-field} can be re-arranged to give,
\begin{align*}\label{eq-field2}
\sum\limits_{i=1}^{w} \epsilon_{\ge i} \sum\limits_{j=d_{i-1}+1}^{d_i}\binom{j+n-1}{n-1}&=\sum\limits_{i=1}^w \sum_{k=i}^w \sum\limits_{j=d_{i-1}+1}^{d_i} \epsilon_k\binom{j+n-1}{n-1}\\
&=\sum_{k=1}^w \sum\limits_{i=0}^{d_k}  \epsilon_k \binom{i+n-1}{n-1}=\sum_{k=1}^w \epsilon_k \binom{d_k+n}{n}.\numberthis
\end{align*}

We note as we let $m$ grow to $\infty$ we have,
$$ \lim\limits_{m\rightarrow \infty} \frac{1}{m^n}\binom{d_k+n}{n}= \frac{q^nk^n}{(2q-1)^n} \frac{1}{n!},$$
and
$$ \lim\limits_{m\rightarrow \infty} \frac{1}{m^n}\binom{mf(x)-1+n}{n}= f(x)^n \frac{1}{n!}.$$
These two limits with \eqref{eq-field} and \eqref{eq-field2} imply,
$$\sum\limits_{k=0}^w \epsilon_k q^n k^n \frac{1}{(2q-1)^n} \le \sum\limits_{x\in\F_q^n} f(x)^n.$$
As $q^n/(2q-1)^n=1/(2-1/q)^n$ and $\epsilon_k=|B_k|/|\P \F_q^{n-1}|$ where $B_k$ is precisely the set of directions for which $f^*$ is $k$ we are done.

We now just have to prove \eqref{eq-field} holds. For $j\in [d_{i-1}+1,d_i]$, using Lemma~\ref{lem-goodmono} we pick a set of monomials $P(j)$ of degree $j$ of size $\epsilon_{\ge i}\binom{j+n-1}{n-1}$ such that any non-zero linear combination of those don't vanish on the entirety of $B_{\ge i}$ with multiplicity at least $r_i$.

Say \eqref{eq-field} doesn't hold, then we can find a non-zero polynomial $Q$ formed by taking a non-zero linear combination of monomials in $\bigcup\limits_{j=0}^{d_w} P(j)$ such that $Q$ vanishes on $x\in \F_q^n$ with multiplicity at least $mf(x)$. This is because vanishing with multiplicity at least $mf(x)$ gives us $\binom{mf(x)+n-1}{n}$ many linear constraints and we have $\sum\limits_{j=0}^w |P(j)|$ many coefficients. $\sum\limits_{j=0}^w |P(j)|$ equals the LHS of \eqref{eq-field}.

Say $Q$ is of degree $d$ with $d_{i-1}<d\le d_{i}$ for some $i\le w$. Let $Q^H$ be the highest degree part of $Q$. In other words, $Q^H$ is the degree $d$ part of $Q$. By construction $Q^H$ is a non-zero linear combination of monomials in $P(d)$.

\begin{claim}
For all $u\in B_{\ge i}$,
$$\textsf{mult}(Q^H,u) \ge r_i.$$
\end{claim}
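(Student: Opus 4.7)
The plan is to fix $u \in B_{\ge i}$ together with a line $L_u = \{a+tu : t \in \F_q\}$ such that $\sum_{x \in L_u} f(x) \ge i$, and for each $\bbeta \in \Z_{\ge 0}^n$ with $k := \text{wt}(\bbeta) < r_i$, consider the univariate restriction $P_\bbeta(t) := Q^{(\bbeta)}(a+tu) \in \F_q[t]$. The goal is to force $P_\bbeta \equiv 0$ and then read off $(Q^H)^{(\bbeta)}(u) = 0$ as the coefficient of $t^{d-k}$; letting $\bbeta$ range over all weights strictly below $r_i$ then gives exactly $\textsf{mult}(Q^H, u) \ge r_i$.

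First I would record the trivial degree bound $\deg_t P_\bbeta \le d - k$. For each $t_0 \in \F_q$, Lemma~\ref{lem:chainRule} identifies $(Q^{(\bbeta)})^{(\balpha)}$ with a scalar multiple of $Q^{(\bbeta + \balpha)}$, so every $\balpha$ with $\text{wt}(\balpha) < mf(a+t_0 u) - k$ forces $(Q^{(\bbeta)})^{(\balpha)}(a+t_0 u) = 0$; combined with Lemma~\ref{lem:multComp} this yields $\textsf{mult}(P_\bbeta, t_0) \ge \max(0, mf(a+t_0 u) - k)$. Summing over $t_0 \in \F_q$ and using $\sum_{x \in L_u} f(x) \ge i$ bounds the total multiplicity of $P_\bbeta$ on $\F_q$ from below by $mi - qk$.

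The central arithmetic step is showing $mi - qk > d - k$, i.e.\ $mi > d + (q-1)k$; once this holds, the total multiplicity strictly exceeds the degree, forcing $P_\bbeta$ to be the zero polynomial. Using $d \le d_i = r_i q - 1$ and $k \le r_i - 1$ reduces this to $mi > r_i(2q-1) - q$, and the ceiling definition $r_i = \lceil (mi+1)/(2q-1) - 1\rceil$ gives $r_i(2q-1) \le mi + 1$, so the right-hand side is at most $mi + 1 - q < mi$ for $q \ge 2$. This is the only step that uses the specific choices of $r_i$ and $d_i$, and it is also the step most likely to trip up bookkeeping.

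Finally, once $P_\bbeta \equiv 0$, expanding $Q^{(\bbeta)}(a+tu) = \sum_\balpha (Q^{(\bbeta)})^{(\balpha)}(a)(tu)^\balpha$ identifies the coefficient of $t^{d-k}$ as the evaluation at $u$ of the degree-$(d-k)$ homogeneous component of $Q^{(\bbeta)}$, which is $(Q^H)^{(\bbeta)}$ because differentiating the decomposition $Q = Q^H + (\text{lower degree})$ preserves the top-degree piece. Hence $(Q^H)^{(\bbeta)}(u) = 0$ for every $\bbeta$ with $\text{wt}(\bbeta) < r_i$, which is exactly $\textsf{mult}(Q^H, u) \ge r_i$ and closes the claim.
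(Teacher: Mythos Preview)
Your proposal is correct and follows essentially the same approach as the paper's proof: restrict $Q^{(\bbeta)}$ to the line $L_u$, use Lemma~\ref{lem:chainRule} together with Lemma~\ref{lem:multComp} to lower bound the total multiplicity of the restriction by $mi - qk$, compare against the degree bound $d-k$ via the arithmetic inequality forced by the definition of $r_i$, and extract $(Q^H)^{(\bbeta)}(u)=0$ as the leading coefficient. Your arithmetic is slightly tighter (you use $k\le r_i-1$ rather than $k<r_i$ directly), but the argument is otherwise identical.
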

\begin{proof}
Let $\bj \in \Z_{\ge 0}^n$ with $\text{wt}(\bj)<r_i$. $Q^{(\bj)}$ is a degree $d-\text{wt}(\bj)$ polynomial. If we consider a $u\in B_{\ge i}$ then there exists a line $L_u=\{a+tu|t\in\F_q\}$ such that $\sum\limits_{x\in L_u} f(x) \ge i$. $Q^{(\bj)}$ will vanish on each point $x$ with multiplicity at least $mf(x)-\text{wt}(\bj)$. By Lemma~\ref{lem:multComp} we see that $Q^{(\bj)}(a+tu)$ vanishes with multiplicity at least $mf(a+tu)-\text{wt}(\bj)$ for $t\in \F_q$. As $Q^{(\bj)}(a+tu)$ is a degree at most $d-\text{wt}(\bj)$ polynomial we see that if $Q^{(\bj)}(a+tu)$ is non-zero then,
$$d-\text{wt}(\bj)\ge \sum\limits_{t\in \F_q} (f(a+tu)-\text{wt}(\bj))\ge mi-\text{wt}(\bj)q.$$
As $d\le d_i=qr_i-1$ and $\text{wt}(\bj)<r_i$ we have,
$$qr_i-1+(q-1)r_i>mi.$$
This means 
$$\left\lceil \frac{mk+1}{2q-1} -1\right\rceil=r_i>(mi+1)/(2q-1),$$
leading to a contradiction. This implies that $Q^{(\bj)}(a+tu)$ is identically $0$. We note the coefficient of $t^{d-\text{wt}(\bj)}$ in $Q^{(\bj)}(a+tu)$ is exactly $(Q^H)^{(\bj)}(u)$. As this argument holds for all $\bj \in \Z_{\ge 0}^n$ with $\text{wt}(\bj)<r_i$ we have that $Q^H$ vanishes on $u$ with multiplicity $r_i$.
\end{proof}

The previous claim leads to a contradiction as by construction $Q^H$ is a non-zero linear combination of monomials in $P(d)$ which by Lemma~\ref{lem-goodmono} cannot vanish on the entirety of $B_{\ge i}$ with multiplicity at least $r_i$.
\end{proof}

\section{Base case: Maximal Kakeya bounds over $\Z/p^k\Z$}\label{sec-base}
The proof here involves replicating the arguments in the proof of Theorem~\ref{thm-field} while applying polynomial method arguments over the complex torus.

\begin{repthm}{thm-maxpk}[Maximal Kakeya bounds over $\Z/p^k\Z$]
Let $n>0$ be an integer, $p$ prime and $k\in \N$. For any function $f:(\Z/p^k\Z)^n\rightarrow \N$ we have the following bound,
$$\sum\limits_{x\in (\Z/p^k\Z)^n} |f(x)|^n \ge \frac{C_{p^k,n}}{|\P(\Z/p^k\Z)^{n-1}|} \left(\sum\limits_{u\in \P (\Z/p^k\Z)^{n-1}} |f^*(u)|^n\right),$$
where
$$C_{p^k,n}= \frac{1}{(2(\lceil\log_p(\max_u f^*(u))+ \log_{p}(n)\rceil ))^n}.$$
When $p>n$ we can improve $C_{p^k,n}$ to,
$$C_{p^k,n}=(\lceil\log_p(\max_u f^*(u))\rceil+1)^{-n} (1+n/p)^{-n}.$$
\end{repthm}
\begin{proof}
 Let $f: (\Z/p^k\Z)^n\rightarrow \N$ and we define $B_{\ge v}=\{u||f^*(u)|\ge v,u\in \P (\Z/p^k\Z)^{n-1}\}$ and $B_{v}=\{u||f^*(u)|=v,u\in \P (\Z/p^k\Z)^{n-1}\}$. In other words, $B_{\ge v}$ is the set of directions where the maximal function is at least $v$ (similarly for $B_v$). We also let $w=\max_u f^*(u)$. For $m\in \N$ and $0\le v\le w$, let
$$\epsilon_{\ge v}=|B_{\ge v}|/|\P (\Z/p^k\Z)^{n-1}|,$$
$$\epsilon_{v}=|B_v|/|\P (\Z/p^k\Z)^{n-1}|$$
$$r_v = mv.$$
By definition, $\epsilon_{\ge v}=\sum\limits_{i=v}^w \epsilon_v$. 

We prove the following for all $m\in \N$.
\begin{align*}\label{eq-pk}
    & \sum\limits_{i=1}^{w} \epsilon_{\ge i} \sum\limits_{j=r_{i-1}+1}^{r_i}\left(\left\lceil\binom{j\lceil\log_p(j)\rceil^{-1} +n}{n}\right\rceil-\left\lceil\binom{(j-1)\lceil\log_p(j-1)\rceil^{-1} +n}{n}\right\rceil\right)\\
    \le & \sum\limits_{x\in (\Z/p^k\Z)^n} \binom{m f(x)+n-1}{n}.\numberthis
\end{align*}
We claim \eqref{eq-pk} proves the Theorem. Let us prove that first. The LHS of \eqref{eq-pk} can be re-arranged to give,
\begin{align*}\label{eq-pk2}
&\sum\limits_{i=1}^{w} \epsilon_{\ge i} \sum\limits_{j=r_{i-1}+1}^{r_i}\left(\left\lceil\binom{j\lceil\log_p(j)\rceil^{-1} +n}{n}\right\rceil-\left\lceil\binom{(j-1)\lceil\log_p(j-1)\rceil^{-1} +n}{n}\right\rceil\right)\\
=&\sum\limits_{i=1}^w \sum_{v=i}^w \sum\limits_{j=r_{i-1}+1}^{r_i} \epsilon_v\left(\left\lceil\binom{j\lceil\log_p(j)\rceil^{-1} +n}{n}\right\rceil-\left\lceil\binom{(j-1)\lceil\log_p(j-1)\rceil^{-1} +n}{n}\right\rceil\right)\\
=&\sum_{v=1}^w \sum\limits_{i=0}^{r_v}  \epsilon_v\left(\left\lceil\binom{j\lceil\log_p(j)\rceil^{-1} +n}{n}\right\rceil-\left\lceil\binom{(j-1)\lceil\log_p(j-1)\rceil^{-1} +n}{n}\right\rceil\right)\\
=&\sum_{v=1}^w \epsilon_v \left\lceil\binom{r_v\lceil\log_p(r_v)\rceil^{-1} +n}{n}\right\rceil. \numberthis
\end{align*}

If we set $m=n$ then using \eqref{eq-pk} and \eqref{eq-pk2} we have,
\begin{align*}
\frac{1}{n!}\sum_{v=1}^w \epsilon_v \frac{v^n}{\lceil\log_p(v)+\log_p(n)\rceil^n}\le n^{-n}\sum_{v=1}^w \epsilon_v \left\lceil\binom{r_v\lceil\log_p(r_v)\rceil^{-1} +n}{n}\right\rceil\le \sum\limits_{x\in (\Z/p^k\Z)^n} \frac{1}{n!}\prod\limits_{i=0}^{n-1} (f(x)+i/n)^n.
\end{align*}
As $(f(x)+i/n)\le 2f(x)$ for integer $f(x)>0$ and $i<n$ we are done.
If we set $m=p$ a similar calculation and using the fact that $(f(x)+i/n)\le f(x)(1+(n-1)/p)$ for integer $f(x)>0$ and $i<n$ we will be done.

Now it remains to prove \eqref{eq-pk} for all $m\in \N$.

Fix some $m\in \N$. Consider a random matrix $G$ uniformly chosen from $\GL_n(\Z/p^k\Z)$ and let $G\cdot f$ be the function which maps $x$ to $f(G^{-1}x)$. For a general set $S$ if we let $G\cdot S$ be the set $\{G\cdot x|x\in S\}$ then the following identities are immediate,
$$G\cdot B_{\ge v}= \{u|(G\cdot f)^*(u)|=|f^*(G^{-1}u)|\ge v,u\in \P (\Z/p^k\Z)^{n-1}\}$$
and 
$$G\cdot B_{v}=\{u||(G\cdot f)^*(u)|=v,u\in \P (\Z/p^k\Z)^{n-1}\}.$$

Consider the matrix $U$ composed of the row vectors $U_{mw}^{\balpha}(\zeta^x)$ for $x\in (\Z/p^k\Z)^n$ and $\text{wt}(\balpha) < m(G\cdot f)(x)$. This has rank at most,
\begin{align}\label{eq:baseLHS}
    \sum\limits_{x\in (\Z/p^k\Z)^n} \binom{m f(x)+n-1}{n}.
\end{align}

Recall, $V_{mw,p}$ is the set of vectors in $\{0,\hdots,mw-1\}^n$ such that it at least has one non-zero coordinate modulo $p$.

Consider the function $m(G\cdot f)$. For any $v=1,\hdots,w$ by construction we have that for every $u \in G \cdot B_{\ge v}$ there exists a line $L_u$ such that $(G\cdot f)(x)\ge v$. Now, Corollary~\ref{cor:decoding} implies that for any $u \in G \cdot B_{\ge v}$, $u'\in V_{mw,p},u' \text{ (mod }p^k\text{)}=u$, $j=r_{v-1}+1,\hdots,r_v$ and $i=0,\hdots,j-1$ there exists a $\Q(\zeta)$-linear combination of $U_{mw}^{\balpha}(\zeta^x)$ where $x\in L_u$ and $\text{wt}(\balpha)\le m(G\cdot f)(x)-1$ such that under $\psi_{p^k}$ we get the $i$th row of $\Coeff(M^{j}_{mw,n}(u'))$.

This means that there exists a matrix with $\Q(\zeta)$-entries such that multiplying this matrix to $U$ from the left and applying $\psi_{p^k}$ gives us a matrix $M$ whose rows are composed of the rows in $\Coeff(M^{j}_{mw,n}(u'))$ for $v=1,\hdots,w$, $u \in G \cdot B_{\ge v}$, $u'\in V_{mw,p},u' \text{ (mod }p^k\text{)}=u$, $j=r_{v-1}+1,\hdots,r_v$ and $i=0,\hdots,j-1$. By Lemma~\ref{lem:quoRank} we have that \eqref{eq:baseLHS} is lower bounded by the rank of $M$. As $G$ is chosen randomly we will show the expected value of the rank of $M$ is at least \begin{equation}\label{eq:baseRHS}
    \sum\limits_{i=1}^{w} \epsilon_{\ge i} \sum\limits_{j=r_{i-1}+1}^{r_i}\left(\left\lceil\binom{j\lceil\log_p(j)\rceil^{-1} +n}{n}\right\rceil-\left\lceil\binom{(j-1)\lceil\log_p(j-1)\rceil^{-1} +n}{n}\right\rceil\right).
\end{equation}
This will complete the proof.

By Corollary~\ref{cor:splitM} we have sets $A_1,\hdots,A_{r_w}$ of linearly independent row vectors in the row space of  $\Coeff(M^{mw}_{mw,n})$ such that elements in $A_i$ are a subset of the rows in $\Coeff(M^{i}_{mw,n}(V_{mw,p}))$ and do not belong in the row space of $\Coeff(M^{i-1}_{mw,n}(V_{mw,p}))$. We also have that,
$$|A_i|=\left\lceil\binom{i \lceil\log_p(i)\rceil^{-1} +n}{n}\right\rceil-\left\lceil\binom{(i-1) \lceil\log_p(i)\rceil^{-1} +n}{n}\right\rceil$$
for $i\in [r_w]$.
\begin{claim}
For $j=r_{i-1}+1,\hdots,r_i$ any given row in $A_j$ will appear in $M$ with probability at least $\epsilon_{\ge i}$
\end{claim}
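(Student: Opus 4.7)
The plan is to leverage the uniform random choice of $G \in \GL_n(\Z/p^k\Z)$ together with the transitivity of this group on $\P (\Z/p^k\Z)^{n-1}$. First I would unpack when a given row of $A_j$ can appear in $M$. A row of $A_j$ lives in $\Coeff(M^j_{mw,n}(V_{mw,p}))$, so it is determined by a pair $(u',i')$ with $u' \in V_{mw,p}$ and $i' \in \{0,\ldots,j-1\}$. On the other hand, the rows of $M$ are indexed by tuples $(v,u,u',j,i')$ with $v\in [w]$, $u\in G\cdot B_{\ge v}$, $u'\in V_{mw,p}$ satisfying $u' \equiv u \pmod{p^k}$, $j\in [r_{v-1}+1,r_v]$, and $i'\in \{0,\ldots,j-1\}$.

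Under the hypothesis $j\in [r_{i-1}+1,r_i]$, the unique value of $v$ compatible with this $j$ is $v=i$, because $r_v = mv$ is strictly increasing so the intervals $[r_{v-1}+1,r_v]$ are disjoint. Hence the fixed row $(u',i')$ of $A_j$ appears among the rows of $M$ if and only if $u' \pmod{p^k} \in G\cdot B_{\ge i}$. Note that since $u'\in V_{mw,p}$, the reduction $u' \pmod{p^k}$ has at least one unit coordinate and therefore represents a well-defined element of $\P (\Z/p^k\Z)^{n-1}$, so this membership question makes sense.

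Finally I would invoke the fact that $\GL_n(\Z/p^k\Z)$ acts transitively on $\P (\Z/p^k\Z)^{n-1}$ (any vector with a unit coordinate can be completed to a basis of $(\Z/p^k\Z)^n$). Since $G$ is uniform in $\GL_n(\Z/p^k\Z)$, for any fixed $w_0 \in \P (\Z/p^k\Z)^{n-1}$ we have
\[
\Pr_G\bigl[w_0 \in G\cdot B_{\ge i}\bigr] \;=\; \Pr_G\bigl[G^{-1}w_0 \in B_{\ge i}\bigr] \;=\; \frac{|B_{\ge i}|}{|\P (\Z/p^k\Z)^{n-1}|} \;=\; \epsilon_{\ge i},
\]
because $G^{-1}w_0$ is uniform on $\P (\Z/p^k\Z)^{n-1}$. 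Applying this with $w_0 = u' \pmod{p^k}$ yields the claim with equality (which in particular gives the stated lower bound). There is no real obstacle here; the only thing to be careful about is to verify the two bookkeeping points above, namely uniqueness of $v$ given $j$ and the fact that $u' \pmod{p^k}$ remains a valid projective direction, both of which follow directly from the definitions of $r_v$ and $V_{mw,p}$.
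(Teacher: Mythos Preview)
Your argument is correct and follows essentially the same route as the paper: identify a row of $A_j$ with some $u'\in V_{mw,p}$, observe that it occurs in $M$ whenever $u'\pmod{p^k}\in G\cdot B_{\ge i}$, and conclude via transitivity of $\GL_n(\Z/p^k\Z)$ on $\P(\Z/p^k\Z)^{n-1}$. One small remark: your ``if and only if'' is stronger than needed and not literally justified (the same \emph{vector} could in principle reappear as a row of some $\Coeff(M^{j'}_{mw,n}(u''))$ for different $(j',u'')$), but since the claim only requires a lower bound, the ``if'' direction you establish suffices, exactly as in the paper.
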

\begin{proof}
Let $j$ be some number from $r_{i-1}+1,\hdots,r_i$. We note that any given row $y$ in $A_j$ will be from some $\Coeff(M^{j-1}_{mw,n}(u'))$ where $u'\in V_{mw,p}$. Now, if $G\cdot B_{\ge i}$ contains $u' \text{ (mod }p^k\text{)}$ then $y$ will appear in the matrix $M$. As $\GL_n(\Z/p^k\Z)$ acts transitively on $\P (\Z/p^k\Z)^{n-1}$ this will happen with probability at least $\epsilon_{\ge i}$.
\end{proof}
Using the linearity of expectation, the above claim immediately implies that the expected rank of $M$ is at least \eqref{eq:baseRHS}.
\end{proof}
\section{Induction Step: Maximal Kakeya bounds over $\Z/N\Z$}\label{sec-induct}

We first give an outline of the proof in a special case with worse constants to highlight the overall strategy. The goal is to extend the proof from \cite{dhar2021proof} using the probabilistic method and Corollary~\ref{cor:splitM} to prove Theorem~\ref{thm-maxN}.

Let $N=p^{k_0}q^{k_1},N_0=p^{k_0},N_1=q^{k_1}$ where $p$ and $q$ are distinct primes. Given a set $S\subseteq (\Z/N\Z)^n$ let $f:(\Z/N\Z)^n\rightarrow \Z_{\ge 0}$ be its indicator function. Let $L_u$ be the line in direction $u \in \P (\Z/N\Z)^{n-1}$ such that $f^*(u)=\sum_{x\in L(u)} f(u)=|S\cap L(u)|$. In other words, $L(u)$ is the line which has the maximal intersection with $S$ in direction $u$. $L_u$ can decomposed as a product of lines $L_0(u_0,u_1)\subseteq (\Z/N_0\Z)^n$ and $L_1(u_0,u_1)\subseteq (\Z/N_1\Z)^n$ in directions $u_0 \in \P (\Z/N_0\Z)^{n-1}$ and $u_1\in \P (\Z/N_1\Z)^{n-1}$ respectively. Note, that $L_0(u_0,u_1)$ and $L_0(u_0,u_1)$ can and do depend on $u=(u_0,u_1)$ and not just on $u_0$ or $u_1$ respectively.

Let $\zeta$ be a primitive $N_0$'th root of unity in $\C$ and $\Ind_y$ be the indicator vector of a point $y\in (\Z/N_1\Z)^n$. We then examine the span of vectors 
$$U_{N_0}^{\bzero}(\zeta^x)\otimes \Ind_y$$
for $x\in L_0(u),y\in L_1(u),u\in \P (\Z/N\Z)^{n-1}$ and $\bzero = (0,\hdots,0)\in \Z_{\ge 0}^n$. By counting the number of vectors we see the dimension of the space spanned by these vectors is at most $|S|$.

For $(u_0,u_1)\in \P (\Z/N_0\Z)^{n-1}\times \P (\Z/N_1\Z)^{n-1}$, we define the function $g_{L(u_0,u_1)}:L_1(u_0,u_1)\rightarrow \Z_{\ge 0}$ as 
$$g_{L(u_0,u_1)}(y)= \sum_{x \in L_0(u_0,u_1)} f(x,y) = |S\cap L_0(u_0,u_1)\times \{y\}|.$$
We note that $g_{L(u_0,u_1)}\le N_0$. In other words, $g_{L(u_0,u_1)}$ is slicing the line $L(u_0,u_1)$ along the $\Z/N_0\Z$ part (that is along $u_0$) and measuring its intersection with $S$. For each $L(u)$ we let $B_{\ge i}(L(u))=\{y|g_{L(u)}(y)\ge i,y\in L_1(u)\}$ and $b_{\ge i}(L(u))=|B_{\ge i}(L(u))|$.

Given $L(u_0,u_1)$ and $y\in L_1(u_0,u_1)$, we can use Corollary~\ref{cor:decoding} to linearly generate the vector
$$M^{g_{L(u_0,u_1)}(y)}_{N_0,n}(u_0)\otimes \Ind_y $$
from the vectors $U_{m}^{\bzero}(\zeta^x)\otimes \Ind_y$ for $x\in L_0(u_0,u_1)$ after applying $\psi_{p^{k_0}}$. Now by Lemma~\ref{lem:quoRank} and Fact~\ref{lem:coeffR} the dimension of the space spanned by the rows in $\Coeff(M^{g_{L(u_0,u_1)}(y)}_{N_0,n}(u_0))\otimes \Ind_y$ for $(u_0,u_1)\in \P (\Z/N_0\Z)^{n-1}\times \P (\Z/N_1\Z)^{n-1}$, $y\in L_1(u_0,u_1)$ is at most $|S|$. Let us call this set of rows $M$.

Corollary~\ref{cor:splitM} will give us a number of linearly independent vectors $A_1,\hdots, A_{N_0}$ such that $A_i\subseteq \Coeff(M^{i}_{N_0,n}),i \in [N_0]$ and the row-spaces of $A_i$ form disjoint subspaces. Now, the goal is to use Fact~\ref{fact-tenLin} by taking each element $z$ in the set $A_i$ and collect $y\in (\Z/N_1\Z)^n$ such that $\Coeff(M^{g_{L(u_0,u_1)}(y)}_{N_0,n}(u_0))$ contains $z$. This will give us a set of linearly independent elements from $M$ and their number will lower bound $|S|$. Note if we know for which $i\in [N_0]$ and $u_0 \in \P (\Z/N_0\Z)^{n-1}$ a given $z$ is in $\Coeff^{~i}_{N_0,n}(u_0))$ then we just need to figure out for what $y$ there exists a $u_1 \in \P (\Z/N_1\Z)^{n-1}$ such that $g_{L(u_0,u_1)}(y)=i$. 

With the above in mind, let $S(u_0,i)$ for $u_0\in \P N_0^{n-1}$ and $i\in [N_0]$ be the set $$S(u_0,i)=\bigcup\limits_{u_1\in \P (\Z/N_1\Z)^{n-1}}B_{\ge i}(L(u_0,u_1)).$$
Using Theorem~\ref{thm-maxpk} (in general here we use the induction hypothesis) we have,
\begin{equation*}
    \rank_{\F_{p_0}}\{\Ind_y|y\in S(u_0,i)\} =|S(u_0,i)|\ge \sum\limits_{u_1 \in \P (\Z/N_1\Z)^{n-1}} \frac{C_{N_1,n}b_{\ge i}(L(u_0,u_1))^n}{|\P (\Z/N_1\Z)^{n-1}|}, 
\end{equation*}
where $C_{N_1,n}= 2^{-1}(\lceil k_1 + \log_q(n)\rceil)^{-n}$. Now we see Fact~\ref{fact-tenLin} and Corollary~\ref{cor:splitM} implies that,
$$ \sum\limits_{u_0 \in \P (\Z/N_0\Z)^{n-1}} \sum_{j\in [N_0]} |S(u_0, j)| |\Coeff(M^j_{N_0,n}(u_0))\cap A_j| \le |S|.$$

Ideally, the above would be enough to complete the proof but it is not easy to directly argue the above is large. What we do is apply random rotations on the $\Z/N_0\Z$ component of our function which will lead to a random permutation of the rows $M^{g_{L(u_0,u_1)}(y)}_{N_0,n}(u_0)$ along the $u_0$ co-ordinate. We then take expectation of the above along with a very simple partition argument to get the desired bounds. For the general case and better constants we use multiplicities as in the proof of Theorem~\ref{thm-maxpk}.

\begin{repthm}{thm-maxN}[Maximal Kakeya bounds over $\Z/N\Z$]
Let $n>0$ be an integer and $N=p_1^{k_1}\hdots p_r^{k_r}$ with $p_i$ primes and $k_i\in \N$. For any function $f:(\Z/N\Z)^n\rightarrow \N$ we have the following bound,
$$\sum\limits_{x\in (\Z/N\Z)^n} |f(x)|^n \ge \frac{C_{N,n}}{|\P(\Z/N\Z)^{n-1}|} \left(\sum\limits_{u\in \P (\Z/N\Z)^{n-1}} |f^*(u)|^n\right),$$
where
{\em \begin{align*}
    C_{N,n}=&\left(\frac{1}{2(\log(\mweight(f,p_1))+1)\lceil \log_{p_1}(\mweight(f,p_1))+\log_{p_1}(n)\rceil}\right)^n\\
    &\cdot \left(\frac{1}{2(k_r+\lceil \log_{p_r}(n)\rceil)}  \prod\limits_{i=2}^{r-1}\frac{1}{2(k_i\log(p_i)+1)(k_i+\lceil \log_{p_i}(n)\rceil)}\right)^n
\end{align*}.}
\end{repthm}
\begin{proof}
The $r=1$ case is Theorem~\ref{thm-maxpk}. Let the theorem be true for some $r$. We will now prove it for $N=p_0^{k_0}p_1^{k_1}\hdots p_r^{k_r}=p_0^{k_0}N_1$. Let $f:(\Z/N\Z)^n\rightarrow \N$ and $w=\textsf{mweight}(f,p_0)$. We will use Fact~\ref{fact:geoChine} to decompose objects into a $\Z/p_0^{k_0}\Z$ component and $\Z/N_1\Z$ component.

Let $L(u)$ be the line in direction $u\in \P (\Z/N\Z)^{n-1}$ such that $\sum_{x\in L(u)} f(x)=f^*(u)$. $L(u)=L(u_0,u_1)$ can be decomposed into a tensor product of lines $L_0(u)$ in direction $u_0\in \P(\Z/p_0^{k_0})^{n-1}$ and $L_1(u)$ in direction $u_1\in \P (\Z/N_1\Z)^{n-1}$. For each $L(u)$ we define the function $g_{L(u)}: L_1(u)\rightarrow \Z_{\ge 0}$ as follows $$g_{L(u)}(y)=\sum_{x \in L_0(u)} f(x,y).$$
By definition of $\mweight$ we have $g_{L(u)}\le w$. For each $L(u)$ we let $B_{\ge i}(L(u))=\{y|g_{L(u)}(y)\ge i,y\in L_1(u)\}$ and $b_{\ge i}(L(u))=|B_{\ge i}(L(u))|$. $B_{\ge i}(L(u))$ is the set of points $y$ on $L_1(u)$ which have a weight greater than $i$ on the $\Z/N_0\Z$ line $L_0(u)\times \{y\}$. We similarly define $B_{= i}(L(u))=\{y|g_{L(u)}(y)=i,y\in L_1(u)\}$ and $b_{= i}(L(u))=|B_{=i}(L(u))|$.

Let $\zeta$ be a $p_0^{k_0}$'th root of unity. Consider a random matrix $G$ uniformly chosen from $\GL_n(\Z/p_0^{k_0}\Z)$. Consider the function $f_G(x,y)=f(G^{-1}\cdot x,y)$ where $x\in (\Z/p_0^{k_0}\Z)^n$ and $y\in (\Z/N_1\Z)^n$. For any set $S$ we also define $G\cdot S=\{G\cdot x| x\in S\}$ where $G$ acts on just the $\Z/p_0^{k_0}\Z$ part of $x$ if $x\in (\Z/N\Z)^n$ and is the usual action if $x\in (\Z/p_0^{k_0}\Z)^n$. We now consider the matrix $U_G$ composed of the row vectors 
$$U^{\balpha}_{mw}(\zeta^{G\cdot x})\otimes \Ind_y$$
for all $x\in (\Z/p_0^{k_0}\Z)^n$ and $y\in (\Z/N_1\Z)^n$ and $\balpha \in \Z_{\ge 0}^n$ with $\text{wt}(\balpha) < m f_G(x,y)$. The rank of $U_G$ is bounded above by
\begin{equation}
    \sum\limits_{x\in (\Z/N\Z)^n} \binom{mf(x)+n-1}{n}.\label{eq-maxNF}
\end{equation}

Recall, $V_{mw,p_0}$ is the set of vectors in $\{0,\hdots,mw-1\}^n$ such that it at least has one non-zero coordinate modulo $p_0$.

Consider the function $mf_G$. We see that by construction the line 
$$G\cdot L(u_0, u_1)= (G\cdot L_0(u_0,u_1), L_1(u_0,u_1))$$
will be a line in direction $(G \cdot u_0,u_1)$ such $\sum_{x \in G\cdot L(u_0, u_1)}f_G(x) = f^*(u_0,u_1) = f_G^*(G\cdot u_0,u_1)$.  Now, Corollary~\ref{cor:decoding} implies that for a $y\in L_1(u_0,u_1)$ and $u_0'\in V_{mw,p_0}, u_0'\text{(mod }p_0^{k_0}\text{)}=G\cdot u_0,j=1,\hdots,m g_{L(u_0,u_1)}(y),i=0,\hdots,j-1$ we have a $\Q(\zeta)$-linear combination of $U_{mw}^{\balpha}(\zeta^{x})$ where $x\in G\cdot L_0(u_0,u_1)$ and $\text{wt}(\balpha)\le m f_G(x,y)-1$ such that under $\psi_{p_0^{k_0}}$ we get the $i$th row of $\Coeff(M^j_{mw,n}(u_0'))$.

This means that there exists a matrix with $\Q(\zeta)$-entries such that multiplying this matrix to $U$ from the left and applying $\psi_{p_0^{k_0}}$ gives us a matrix $M_G$ whose rows are composed of the rows in $\Coeff(M^j_{mw,n}(u_0'))\otimes \Ind_y$ for $(u_0,u_1)\in \P (\Z/N\Z)^{n-1}$, $y\in L_1(u_0,u_1)$ and $u_0'\in V_{mw,p_0}, u_0'\text{(mod }p_0^{k_0}\text{)}=G\cdot u_0,$ and $j=1,\hdots,m g_{L(u_0,u_1)}(y)$.

By construction and Lemma~\ref{lem:quoRank}, the $\F_{p_0}$-rank of $M_G$ lower bounds the rank of $U_G$ and hence lower bounds \eqref{eq-maxNF}. In general, it suffices to lower bound the expected rank of $M_G$ over a random choice of $G\in \GL_n(\Z/p_0^{k_0}\Z)$. We do that in the next claim.

\begin{claim}\label{claim-maxNIne}
{\em $$\sum\limits_{u \in \P (\Z/N\Z)^{n-1},i\in [w]} \frac{C_{N_1,n}b_{\ge i}(L(u))^n}{|\P (\Z/N\Z)^{n-1}|} \cdot  \left(\left\lceil\binom{\frac{mi}{ \lceil\log_{p_0}(mi)\rceil} +n}{n}\right\rceil-\left\lceil\binom{\frac{m(i-1)}{ \lceil\log_{p_0}(m(i-1))\rceil} +n}{n}\right\rceil\right)$$}
is a lower bound for {\em $\E_{G\in \GL_n(\Z/p_0^{k_0}\Z)}[ \rank_{\F_{p_0}} M_G]$} where 
$$C_{N_1,n}=\left(\prod_{j=2}^{r-1}\frac{1}{k_j\log(p_j)+1}\prod\limits_{i=2}^r\frac{1}{(2(k_i+\lceil\log_{p_i}(n)\rceil))}\right)^n.$$
\end{claim}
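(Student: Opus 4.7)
The plan is to lower-bound $\E_G[\rank_{\F_{p_0}} M_G]$ by combining (i) the linearly-independent coefficient-rows supplied by Corollary~\ref{cor:splitM} on the $\Z/p_0^{k_0}\Z$ factor with (ii) the indicator vectors $\Ind_y$, whose aggregate count on the $\Z/N_1\Z$ factor is controlled by the inductive hypothesis. Apply Corollary~\ref{cor:splitM} with $\ell=mw$ to obtain linearly independent row families $A_1,\dots,A_{mw}$, where each $r\in A_j$ is a coefficient-row of $\Coeff(M^{j}_{mw,n}(u_0'(r)))$ for some $u_0'(r)\in V_{mw,p_0}$. Set $u_0^*(r):=u_0'(r)\bmod p_0^{k_0}$, $i(j):=\lceil j/m\rceil$, and for each $u_0\in\P(\Z/p_0^{k_0}\Z)^{n-1}$ and $i\in[w]$ define the slice set
\[
S(u_0,i) \;:=\; \bigcup_{u_1\in\P(\Z/N_1\Z)^{n-1}} B_{\ge i}(L(u_0,u_1))\;\subseteq\;(\Z/N_1\Z)^n.
\]

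By the construction of $M_G$ via Corollary~\ref{cor:decoding}, for fixed $G$ the vector $r\otimes\Ind_y$ lies in the row space of $M_G$ iff there is some $u_1$ with $m\,g_{L(G^{-1}u_0^*(r),u_1)}(y)\ge j$, equivalently $y\in S(G^{-1}u_0^*(r),i(j))$. Since $\bigcup_j A_j$ is linearly independent and $\{\Ind_y\}$ is the standard basis of $\F_{p_0}^{(\Z/N_1\Z)^n}$, Fact~\ref{fact-tenLin} gives
\[
\rank_{\F_{p_0}} M_G \;\ge\; \sum_{j=1}^{mw}\sum_{r\in A_j} |S(G^{-1}u_0^*(r),i(j))|.
\]
Averaging over $G$ uniform in $\GL_n(\Z/p_0^{k_0}\Z)$ and using transitivity on $\P(\Z/p_0^{k_0}\Z)^{n-1}$, each $G^{-1}u_0^*(r)$ is uniform, so
\[
\E_G|S(G^{-1}u_0^*(r),i(j))| \;=\; |\P(\Z/p_0^{k_0}\Z)^{n-1}|^{-1}\sum_{u_0}|S(u_0,i(j))|.
\]

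For each $(u_0,i)$ apply the inductive hypothesis (Theorem~\ref{thm-maxN} for the $r$-prime case) to the indicator function $\Ind_{S(u_0,i)}:(\Z/N_1\Z)^n\to\N$. Since $B_{\ge i}(L(u_0,u_1))\subseteq L_1(u_0,u_1)$ lies on a $u_1$-line, the maximal function of $\Ind_{S(u_0,i)}$ at $u_1$ is at least $b_{\ge i}(L(u_0,u_1))$; and for indicator functions $\mweight(\Ind,p)\le p$, which reduces the general $\mweight$-dependent constant of Theorem~\ref{thm-maxN} to precisely the $C_{N_1,n}$ of the claim. Hence
\[
|S(u_0,i)| \;\ge\; \frac{C_{N_1,n}}{|\P(\Z/N_1\Z)^{n-1}|}\sum_{u_1} b_{\ge i}(L(u_0,u_1))^n.
\]

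Putting the pieces together: group the $j$-sum by its image $i=i(j)\in[w]$ so that $j\in[m(i-1)+1,mi]$; telescoping $|A_j|$ from Corollary~\ref{cor:splitM} over this range yields the binomial difference in the claim (using monotonicity of $\lceil\log_{p_0}(\cdot)\rceil$ to pass to the slightly weaker form with $\lceil\log_{p_0}(m(i-1))\rceil$ in the subtracted term); and Fact~\ref{fact:geoChine} gives $|\P(\Z/N\Z)^{n-1}|=|\P(\Z/p_0^{k_0}\Z)^{n-1}|\cdot|\P(\Z/N_1\Z)^{n-1}|$, merging the two direction sums into a single sum over $u\in\P(\Z/N\Z)^{n-1}$. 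I expect the main subtlety to be verifying that the general $\mweight$-dependent constant of Theorem~\ref{thm-maxN} applied to $\Ind_{S(u_0,i)}$ reduces exactly to the $C_{N_1,n}$ stated in the claim: this hinges on $\mweight(\Ind,p)\le p$ forcing the factor $\log(\mweight(f,p))+1$ to collapse to $\log(p)+1$, and then to $k_j\log(p_j)+1$ after reindexing over the remaining primes of $N_1$.
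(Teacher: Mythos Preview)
Your argument is correct and follows the same approach as the paper's proof: you invoke Corollary~\ref{cor:splitM} for the linearly independent row families $A_j$, use Fact~\ref{fact-tenLin} to pair them with the indicator vectors $\Ind_y$ over the slice sets $S(u_0,i)$, average over $G$ via transitivity of $\GL_n(\Z/p_0^{k_0}\Z)$ on $\P(\Z/p_0^{k_0}\Z)^{n-1}$, and bound $|S(u_0,i)|$ by the induction hypothesis. The only cosmetic difference is that the paper sums over $u_0$ and then over $u_0'\in V'(G\cdot u_0)$ to count rows of $A_j$, whereas you sum directly over $r\in A_j$ and track $G^{-1}u_0^*(r)$; these are the same sum reindexed, and the paper cites Theorem~\ref{thm-maxSet} (the indicator-function specialization) rather than Theorem~\ref{thm-maxN}, which you correctly reduce to via $\mweight(\Ind,p_1)\le p_1^{k_1}$.
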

\begin{proof}
Let $S(u_0,i)$ for $u_0\in \P (\Z/p_0^{k_0}\Z)^{n-1}$ and $i\in [w]$ be the set $$S(u_0,i)=\bigcup\limits_{u_1\in \P (\Z/N_1\Z)^{n-1}}B_{\ge i}(L(u_0,u_1)).$$
Using the induction hypothesis (in particular we use Theorem~\ref{thm-maxSet} for $r$ prime factors) we have,
\begin{equation}
    \rank_{\F_{p_0}}\{\Ind_y|y\in S(u_0,i)\} =|S(u_0,i)|\ge \sum\limits_{u_1 \in \P (\Z/N_1\Z)^{n-1}} \frac{C_{N_1,n}b_{\ge i}(L(u_0,u_1))^n}{|\P (\Z/N_1\Z)^{n-1}|}. \label{eq-Sibound}
\end{equation}

Recall for a fixed $G\in \GL_n(\Z/p_0^{k_0}\Z)$, $M_G$'s rows are the rows in $\Coeff(M^j_{mw,n}(u_0'))\otimes \Ind_y$ for $(u_0,u_1)\in \P (\Z/N\Z)^{n-1}$ ,$y\in L_1(u_0,u_1)$ and $u_0'\in V_{mw,p_0}, u_0'\text{(mod }p_0^{k_0}\text{)}=G\cdot u_0,$ and $j\in [m g_{L(u_0,u_1)}(y)]$.

We can easily re-write this condition to note that $M_G$'s rows are the rows in $\Coeff(M^j_{mw,n}(u_0'))\otimes \Ind_y$ for $u_0\in \P (\Z/p_0^{k_0}\Z)^{n-1}$, $u_0'\in V_{mw,p_0}, u_0'\text{(mod }p_0^{k_0}\text{)}=G\cdot u_0,$ and $y\in S(u_0,\lceil j/m \rceil)$ for $j\in [mw]$.

By Corollary~\ref{cor:splitM} we have a set $A=A_1\cup\hdots \cup A_{mw}$ of linearly independent row vectors in the row space of $\Coeff(M^{mw}_{mw,n})$ such that
$A_i$ are a subset of rows in $\Coeff(M^i_{mw,n}(V_{m,p_0}))$ and do not belong in the row space of $\Coeff(M^{i-1}_{mw,n})$ for $i\in [mw]$. We also have
$$|A_i|=\left\lceil\binom{i \lceil\log_{p_0}(i)\rceil^{-1} +n}{n}\right\rceil-\left\lceil\binom{(i-1) \lceil\log_{p_0}(i-1)\rceil^{-1} +n}{n}\right\rceil $$
for $i\in [mw]$.

For a $u_0 \in \P (\Z/p_0^{k_0}\Z)^{n-1}$, let $V'(u_0)$ be the set of $u_0'\in V_{mw,p_0}$ such that $u_0'\text{(mod }p_0^{k_0}\text{)}=u_0$.

Now Fact~\ref{fact-tenLin} implies that,
$$\sum\limits_{u_0 \in \P (\Z/p_0^{k_0}\Z)^{n-1}} \sum_{j\in [mw]} |S(u_0,\lceil j/m\rceil )| \left(\sum\limits_{u'_0\in V'(G\cdot u_0)}  |\Coeff(M^j_{mw,n}(u'_0)\cap A_j|\right) \le \rank_{\F_{p_0}} M_G.$$

By \eqref{eq-Sibound}, to prove the claim it now suffices to show that
$$\E_{G\in \GL_n(\Z/p_0^{k_0}\Z)}\left[\sum\limits_{u'_0\in V'(G\cdot u_0)}  |\Coeff(M^j_{mw,n}(u'_0)\cap A_j|\right]\ge \frac{|A_j|}{|\P (\Z/p_0^{k_0}\Z)^{n-1}|}.$$

A given row vector in $A_j$ corresponds to a row vector in $M^j_{mw,n}(V'(u'))$ for some $u'\in \P (\Z/p_0^{k_0}\Z)^{n-1}$. This means that $A_j$ appears in $\Coeff(M^j_{mw,n}(u'_0))$ for some $u'_0\in V'(G\cdot u_0)$ if and only if $G\cdot u_0 = u'$. As the action of $\GL_n(\Z/p_0^{k_0}\Z)$ is transitive on $\P (\Z/p_0^{k_0}\Z)^{n-1}$ we see that the probability of this happening is $|\P (\Z/p_0^{k_0}\Z)^{n-1}|^{-1}$. Linearity of expectation then completes the proof.
\end{proof}
%  \left\lceil \binom{ mi \lceil \log_{p_0}(mi) \rceil^{-1} + n}{n} \right\rceil

We will also need another simple claim.
\begin{claim}\label{claim-srint}
For all $u$,
{\em $$\sum\limits_{i\in [w]} b_{\ge i}(L(u))^n (i^n-(i-1)^n)\ge \left(\frac{|f^*(u)|}{\log(w)+1}\right)^n.$$}
\end{claim}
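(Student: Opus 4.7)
The plan is to rewrite $f^*(u)$ in terms of level-set sizes of $g_{L(u)}$ on $L_1(u)$ and then apply H\"older's inequality. By layer-cake counting,
$$f^*(u) \;=\; \sum_{y\in L_1(u)} g_{L(u)}(y) \;=\; \sum_{i=1}^{w} b_{\ge i}(L(u)).$$
Writing $b_i := b_{\ge i}(L(u))$ and $F := f^*(u)$, so that $F = \sum_{i=1}^w b_i$, the claim reduces to
$$\sum_{i=1}^w b_i^n \bigl(i^n - (i-1)^n\bigr) \;\ge\; \left(\frac{F}{\log(w)+1}\right)^n.$$

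The key elementary input I would use is the inequality $i^n - (i-1)^n \ge i^{n-1}$ for every $i\ge 1$, which is equivalent to $i^{n-1} \ge (i-1)^{n-1}$ after dividing by $i-1$ (and holds trivially at $i=1$). Granting this, I would apply H\"older with conjugate exponents $n$ and $n/(n-1)$ to the factorization $b_i = \bigl(b_i^n(i^n-(i-1)^n)\bigr)^{1/n} \cdot (i^n-(i-1)^n)^{-1/n}$ to get
$$F^n \;=\; \left(\sum_{i=1}^w b_i\right)^n \;\le\; \left(\sum_{i=1}^w b_i^n\, (i^n-(i-1)^n)\right) \cdot \left(\sum_{i=1}^w (i^n-(i-1)^n)^{-1/(n-1)}\right)^{n-1}.$$
Combining the pointwise bound with the standard harmonic estimate gives
$$\sum_{i=1}^w (i^n-(i-1)^n)^{-1/(n-1)} \;\le\; \sum_{i=1}^w \frac{1}{i} \;\le\; \log(w)+1,$$
and rearranging yields $\sum_i b_i^n (i^n-(i-1)^n) \ge F^n/(\log(w)+1)^{n-1} \ge F^n/(\log(w)+1)^n$, where the last step uses $\log(w)+1 \ge 1$.

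There is no substantive obstacle here; the H\"older setup is standard once one identifies that the weights $w_i := i^n-(i-1)^n$ satisfy $w_i^{-1/(n-1)} \le 1/i$. The only edge case needing separate mention is $n=1$, where the claim collapses to $F \ge F/(\log(w)+1)$ and is immediate. In fact the argument produces the slightly stronger bound with denominator $(\log(w)+1)^{n-1}$, so the statement of Claim~\ref{claim-srint} has a small amount of slack built in.
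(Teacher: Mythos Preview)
Your proof is correct but proceeds differently from the paper. The paper argues by a pigeonhole/harmonic-sum contradiction: since $\sum_{i=1}^w b_{\ge i}(L(u)) = f^*(u)$, there must exist some index $j$ with $b_{\ge j}(L(u)) \ge f^*(u)/(j(\log(w)+1))$, or else summing the opposite inequalities would force $\sum_i b_{\ge i}(L(u)) < f^*(u)$. Having found such a $j$, the paper uses the monotonicity $b_{\ge 1} \ge b_{\ge 2} \ge \cdots$ to bound $\sum_{i\le j} b_{\ge i}^n(i^n-(i-1)^n) \ge b_{\ge j}^n \cdot j^n \ge (f^*(u)/(\log(w)+1))^n$.

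Your route via H\"older is cleaner in two respects: it does not invoke the monotonicity of $b_{\ge i}$, and it yields the sharper denominator $(\log(w)+1)^{n-1}$ rather than $(\log(w)+1)^n$. The paper's argument, on the other hand, is perhaps more transparent about where the harmonic factor originates and avoids the need to single out the $n=1$ case. Both approaches ultimately rest on the same estimate $\sum_{i=1}^w 1/i \le \log(w)+1$.
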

\begin{proof}
By definition we have 
\begin{align}
    b_{\ge 1}(L(u))\ge b_{\ge 2}(L(u))\ge \hdots\ge b_{\ge w}(L(u)).\label{eq-clsr1}
\end{align}
We will show that there exists some $j\in [w]$ such that $b_{\ge j}(L(u))\ge f^*(u)/(j(\log(w)+1))$. Then using \eqref{eq-clsr1} we will be done.

Say for all $j\in [w]$ we have $b_{\ge j}(L(u))< f^*(u)/(j(\log(w)+1))$. By the choice of $L(u)$ we have 
$$\sum\limits_{i\in [w]} b_{=i}(L(u))i=\sum\limits_{i\in [w]} b_{\ge i}(L(u)) =  f^*(u).$$
But the above leads to a contradiction if we sum $b_{\ge i}(L(u))< f^*(u)/(i(\log(w)+1))$ for $i\in [w]$.
\end{proof}

We can re-arrange the summation in the lower bound of Claim~\ref{claim-maxNIne} to get,
\begin{align*}
    \sum\limits_{u \in \P (\Z/N\Z)^{n-1},i\in [w]} \frac{C_{N_1,n}b_{\ge i}(L(u))^n}{|\P (\Z/N\Z)^{n-1}|} \cdot  \left(\left\lceil\binom{\frac{mi}{ \lceil\log_{p_0}(mi)\rceil} +n}{n}\right\rceil-\left\lceil\binom{\frac{m(i-1)}{ \lceil\log_{p_0}(m(i-1))\rceil} +n}{n}\right\rceil\right) & \ge \\
    \sum\limits_{u \in \P (\Z/N\Z)^{n-1},i\in [w]} \frac{C_{N_1,n}}{|\P (\Z/N\Z)^{n-1}|} \cdot\left\lceil\binom{\frac{mi}{ \lceil\log_{p_0}(mi)\rceil} +n}{n}\right\rceil (b_{\ge i}(L(u))^n-b_{\ge i+1}(L(u))^n) & \ge \\
    \sum\limits_{u \in \P (\Z/N\Z)^{n-1},i\in [w]} \frac{C_{N_1,n}}{|\P (\Z/N\Z)^{n-1}|} \cdot \frac{(mi)^n}{\lceil\log_{p_0}(w)+\log_{p_0}(m)\rceil^n } (b_{\ge i}(L(u))^n-b_{\ge i+1}(L(u))^n)& \ge \\
    \sum\limits_{u \in \P (\Z/N\Z)^{n-1},i\in [w]} \frac{C_{N_1,n}}{|\P (\Z/N\Z)^{n-1}|} \cdot \frac{b_{\ge i}(L(u))^n}{\lceil\log_{p_0}(w)+\log_{p_0}(m)\rceil^n } ((mi)^n-(m(i-1))^n).
\end{align*}

Setting $m=n$ in the above inequality combined with Claim~\ref{claim-maxNIne} and the fact that $\E_{G\in \GL_n(\Z/p_0^{k_0}\Z)}[ \rank_{\F_{p_0}} M_G]$ lower bounds \eqref{eq-maxNF} gives us
$$\frac{1}{n!}\sum\limits_{u \in \P (\Z/N\Z)^{n-1},i\in [w]} \frac{C_{N_1,n}}{|\P (\Z/N\Z)^{n-1}|} \cdot  \frac{b_{\ge i}(L(u))(i^n-(i-1)^n)}{\lceil\log_{p_0}(w)+\log_{p_0}(n)\rceil^n} \le \sum\limits_{x\in (\Z/N\Z)^n} \frac{1}{n!} \prod\limits_{i=0}^{n-1} (f(x)+i/n).$$

Using the fact that $f(x)+i/n\le 2f(x)$ for $f(x)\ge 1$ and Claim~\ref{claim-srint} we are done.
\end{proof}

\section{Proof of Conjecture~\ref{conj}}\label{sec-conj}
\begin{repconjecture}{conj}[Kakeya Maximal conjecture over $\Z/N\Z$]
For all $\eps > 0$ and $n\in \N$ there exists a constant $C_{n,\eps}$ such that the following holds:
For a choice of a line $L(u)$ for each direction $u\in \P (\Z/N\Z)^{n-1}$ we have,
$$ \left\|\sum\limits_{u \in \P (\Z/N\Z)^{n-1}} \Ind_{L(u)}\right\|_{\ell^{n/(n-1)}} \leq C_{n,\epsilon} N^{\eps} \left(\sum\limits_{u \in \P (\Z/N\Z)^{n-1}} |L(u)| \right)^{(n-1)/n}.$$
\end{repconjecture}

We note $2N^{n-1}\ge \left(\sum_{u \in \P (\Z/N\Z)^{n-1}} |L(u)| \right)^{(n-1)/n}\ge N^{n-1}$ so it suffices to replace the right hand side by $N^{n-1}$. 

We will also need the following known number theoretic estimate which follows from Theorem 315 in \cite{Hardy2008AnIT}.

\begin{fact}\label{fact-Prest}
For a natural number $N$ with prime factorization $p_1^{k_1}\hdots p_r^{k_r}$ we have,
$$\prod\limits_{i=1}^r k_i\log(p_i) = N^{o(1)},$$
where $o(1)$ tends to $0$ as $N\rightarrow \infty$.
\end{fact}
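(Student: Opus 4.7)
The plan is to deduce Conjecture~\ref{conj} as the formal $\ell^n$--$\ell^{n/(n-1)}$ dual of Theorem~\ref{thm-maxN}, and then verify that the constant $C_{N,n}^{-1/n}$ one picks up is $N^{o_n(1)}$ by invoking Fact~\ref{fact-Prest} together with the classical bound $\omega(N) = O(\log N / \log\log N)$ on the number of distinct prime divisors of $N$.

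\emph{Duality step.} Fix any choice $\{L(u)\}$ of lines and put $g = \sum_{u \in \P(\Z/N\Z)^{n-1}} \Ind_{L(u)}$. Since $\ell^n$ and $\ell^{n/(n-1)}$ are dual norms on the finite set $(\Z/N\Z)^n$,
$$\|g\|_{\ell^{n/(n-1)}} = \sup_{f \ge 0,\, \|f\|_{\ell^n}=1}\sum_u \sum_{x\in L(u)} f(x) \le \sup_f \sum_u f^*(u),$$
using the pointwise bound $\sum_{x\in L(u)} f(x)\le f^*(u)$. Applying Hölder with exponents $n$ and $n/(n-1)$ and then Theorem~\ref{thm-maxN},
$$\sum_u f^*(u) \le |\P(\Z/N\Z)^{n-1}|^{(n-1)/n}\Bigl(\sum_u f^*(u)^n\Bigr)^{1/n} \le C_{N,n}^{-1/n}\cdot|\P(\Z/N\Z)^{n-1}|\cdot\|f\|_{\ell^n}.$$
Since $|\P(\Z/N\Z)^{n-1}| \le n^{\omega(N)} N^{n-1} = N^{n-1 + o_n(1)}$ (by the CRT decomposition of the projective space into the prime-power pieces) and $\bigl(\sum_u |L(u)|\bigr)^{(n-1)/n} \ge N^{n-1}$ as noted at the start of Section~\ref{sec-conj}, it suffices to prove $C_{N,n}^{-1/n} \le C_{n,\eps}N^\eps$.

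\emph{Bounding $C_{N,n}^{-1/n}$.} The constraint $\|f\|_{\ell^n}\le 1$ with $f\ge 0$ forces $\|f\|_\infty\le 1$, so $\mweight(f,p_1)\le p_1^{k_1}\le N$. Substituting into the definition, $C_{N,n}^{-1/n}$ is dominated by $2^{O(r)}\prod_{i=1}^r (k_i\log p_i+1)(k_i + \lceil\log_{p_i} n\rceil + 1)$. Split $k_i\log p_i+1 = k_i\log p_i\cdot(1+1/(k_i\log p_i))$: Fact~\ref{fact-Prest} gives $\prod_i k_i\log p_i = N^{o(1)}$, and the correction is at most $(1+1/\log 2)^r$. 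For the $n$-dependent factor, use $k_i\ge 1$ and $p_i\ge 2$ to get $k_i+\lceil\log_{p_i} n\rceil + 1 \le k_i(2+\log_2 n)$, and $\prod_i k_i \le (\log 2)^{-r}\prod_i k_i\log p_i = N^{o(1)}\cdot(\log 2)^{-r}$. Finally $r\le \omega(N) = O(\log N/\log\log N)$ makes every $r$-th power of a constant (even one depending on $n$) equal to $N^{o_n(1)}$. Hence $C_{N,n}^{-1/n} = N^{o_n(1)}$, completing the proof.

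\emph{Main obstacle.} The duality reduction is essentially one line; the real work is the arithmetic bookkeeping for $C_{N,n}^{-1/n}$. Every factor of the clumsy product defining $C_{N,n}$ — logarithmic contributions from $\mweight(f,p_1)$, the $\log_{p_i} n$ corrections, and the $2^{O(r)}$ prefactor — must be absorbed into $N^{o(1)}$. Fact~\ref{fact-Prest} controls the multiplicative part $\prod k_i\log p_i$, while $\omega(N) = O(\log N/\log\log N)$ handles all $r$-fold products of constants, including those whose base depends on $n$ and gets swallowed by $C_{n,\eps}$.
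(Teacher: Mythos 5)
You have not proved the statement in question. The statement is Fact~\ref{fact-Prest} itself, the purely number-theoretic estimate $\prod_{i=1}^r k_i\log(p_i)=N^{o(1)}$; your proposal instead reproves (a variant of) the Section~\ref{sec-conj} duality argument deducing Conjecture~\ref{conj} from Theorem~\ref{thm-maxN}, and in the course of doing so it explicitly \emph{invokes} Fact~\ref{fact-Prest} ("Fact~\ref{fact-Prest} gives $\prod_i k_i\log p_i=N^{o(1)}$"). As a proof of the assigned statement this is circular: the only place the required bound appears is as a black-box citation of the very fact to be established. The paper itself disposes of the Fact in one line, by citing Theorem 315 of Hardy--Wright (the divisor-function bound $d(N)=N^{o(1)}$), so your attempt and the paper's proof are not comparable -- yours contains no argument for the Fact at all, only for a downstream consequence that the paper proves separately and that was not asked for. (Your duality step also has a minor mismatch with the paper: Theorem~\ref{thm-maxN} is stated for $\N$-valued $f$, which is why the paper rounds up to $f=\lceil g\rceil$ rather than optimizing over real-valued $f$ with $\|f\|_{\ell^n}=1$; but this is beside the main point.)

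What a genuine proof of Fact~\ref{fact-Prest} looks like: write $k_i\log p_i=\log\bigl(p_i^{k_i}\bigr)=y_i$, so $y_i>0$ and $\sum_{i=1}^r y_i=\log N$. By AM--GM, $\prod_{i=1}^r y_i\le\bigl(\tfrac{\log N}{r}\bigr)^{r}$. Since the $p_i$ are distinct primes, $N\ge p_1\cdots p_r$ forces $r=\omega(N)=O\bigl(\log N/\log\log N\bigr)$, and since $t\mapsto(\log N/t)^t$ is increasing for $t<\log N/e$, one gets
$$\prod_{i=1}^r k_i\log p_i\;\le\;\exp\!\Bigl(O\Bigl(\tfrac{\log N}{\log\log N}\cdot\log\log\log N\Bigr)\Bigr)=N^{o(1)},$$
which is the claim (alternatively, one cites Hardy--Wright as the paper does, since the same circle of ideas underlies $d(N)=N^{o(1)}$). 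Note that the auxiliary bound $\omega(N)=O(\log N/\log\log N)$ you quote is of exactly this flavor, so quoting it while leaving the product bound unproved does not close the gap.
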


\begin{proof}[Proof of Conjecture~\ref{conj}]
Throughout this proof we treat $n$ as fixed. In particular we hide any functions only depending on $n$ in the $O$ and $\Omega$ notation.

Let $N=p_1^{k_1}\hdots p_r^{k_r}$ for primes $k_1>\hdots>k_r$. 

Let $h(x)=\sum_{u \in \P (\Z/N\Z)^{n-1}} \Ind_{L(u)}(x)$ and $g(x)= h(x)^{1/(n-1)}$. We note $\|g\|_\infty\le 2N$.

We immediately have the following identity,
\begin{equation}
    \left\|\sum\limits_{u \in \P (\Z/N\Z)^{n-1}} \Ind_{L(u)}\right\|_{\ell^{n/(n-1)}}= \|h\|_{\ell^{n/(n-1)}}= \frac{\langle g,h\rangle}{\|g\|_{\ell^n}}.\label{eq-conj1}
\end{equation}

Let $f(x)=\lceil g(x) \rceil$. We note $g(x) \le f(x) \le 2g(x)$ and $\|f\|_\infty\le 2\|g\|_\infty \le 4N$. This also means that $\mweight(f,p_1)\le 4Np_1^{k}$.
By the definition of the maximal function we note $f^*(u)\ge \sum_{x\in L(u)} f(x)$. We let $f'(u)=\sum_{x\in L(u)} f(x)$ and $g'(u)=\sum_{x\in L(u)} g(u)$ for $u\in \P (\Z/N\Z)^{n-1}$.

Now, using $f^*\ge f'$ and Theorem~\ref{thm-maxN} we have,
\begin{equation}
    \|f\|_{\ell^n}\ge |\P (\Z/N\Z)^{n-1}|^{-1/n} C^{1/n} \|f^*\|_{\ell^n}\ge |\P (\Z/N\Z)^{n-1}|^{-1/n} C^{1/n} \|f'\|_{\ell^n} \label{eq-conj2}
\end{equation}

where

\begin{equation}
    C^{1/n}=\Omega\left(\frac{1}{\log_{p_1}(Np_1^kn)+1}\prod\limits_{i=2}^r\frac{1}{(k_i\log(p_i)+1)(2(k_i+\lceil \log_{p_i}(n)\rceil ))}\right)\ge  \Omega(N^{-o(1)}) \label{eq-conj3}
\end{equation}
using Fact~\ref{fact-Prest} ($o(1)$ is a function which goes to $0$ as $N\rightarrow \infty$ and $n$ is fixed). Putting \eqref{eq-conj1}, \eqref{eq-conj2}, and \eqref{eq-conj3} together with the fact that $g(x)\le f(x)\le 2g(x)$ we get,
\begin{align*}
\left\|\sum\limits_{u \in \P (\Z/N\Z)^{n-1}} \Ind_{L(u)}\right\|_{\ell^{n/(n-1)}}&\le 2 \frac{\langle h,f\rangle}{\|f\|_{\ell^n}} \le 2C^{-1/n} |\P (\Z/N\Z)^{n-1}|^{1/n} \frac{\langle h,f\rangle}{\|f'\|_{\ell^n}}\\
&\le O(N^{o(1)}) |\P (\Z/N\Z)^{n-1}|^{1/n} \frac{\langle h,f\rangle}{\|f'\|_{\ell^n}}\\
&=O(N^{o(1)}) |\P (\Z/N\Z)^{n-1}|^{1/n} \frac{\sum\limits_{x\in (\Z/N\Z)^n}\sum\limits_{u \in \P (\Z/N\Z)^{n-1}} \Ind_{L(u)}(x)f(x)}{\|f'\|_{\ell^n}}\\
&= O(N^{o(1)}) |\P (\Z/N\Z)^{n-1}|^{1/n} \frac{\sum\limits_{u \in \P (\Z/N\Z)^{n-1}} \sum\limits_{x\in L(u)} f(x)}{\|f'\|_{\ell^n}} \\
&= O(N^{o(1)}) |\P (\Z/N\Z)^{n-1}|^{1/n} \frac{\sum\limits_{u \in \P (\Z/N\Z)^{n-1}} f'(u)}{\|f'\|_{\ell^n}}.
\end{align*}
Finally, applying H\"older's inequality gives us,
\begin{align*}
\left\|\sum\limits_{u \in \P (\Z/N\Z)^{n-1}} \Ind_{L(u)}\right\|_{\ell^{n/(n-1)}}  &  \le O(N^{o(1)})|\P (\Z/N\Z)^{n-1}|^{1/n} \|1\|_{\ell^{n/(n-1)}(\P (\Z/N\Z)^{n-1})} \\
&\le O(N^{o(1)}) |\P (\Z/N\Z)^{n-1}|\le O(N^{n-1+o(1)}).
\end{align*}
As $N$ grows this completes the proof.
\end{proof}

\bibliographystyle{alpha}
\bibliography{bibliography}

\begin{thebibliography}{DKSS13}

\bibitem[Ars21a]{arsovski2021padic}
Bodan Arsovski.
\newblock The p-adic {K}akeya conjecture.
\newblock arXiv preprint 2108.03750v1, 08-08-2021.

\bibitem[Ars21b]{arsovskiNew}
Bodan Arsovski.
\newblock The p-adic {K}akeya conjecture.
\newblock arXiv preprint 2108.03750v3, 11-01-2021.

\bibitem[BC21]{bukh2021sharp}
Boris Bukh and Ting-Wei Chao.
\newblock Sharp density bounds on the finite field {K}akeya.
\newblock arXiv preprint 2108.00074, 2021.

\bibitem[Car18]{CML_2018__10_1_3_0}
Xavier Caruso.
\newblock Almost all non-archimedean {K}akeya sets have measure zero.
\newblock {\em Confluentes Mathematici}, 10(1):3--40, 2018.

\bibitem[DD21]{dhar2021proof}
Manik Dhar and Zeev Dvir.
\newblock Proof of the {K}akeya set conjecture over rings of integers modulo
  square-free {N}.
\newblock {\em Combinatorial Theory}, to appear, 2021.

\bibitem[DD22]{dhar2022linear}
Manik Dhar and Zeev Dvir.
\newblock Linear hashing with $\ell_\infty$ guarantees and two-sided kakeya
  bounds, 2022.

\bibitem[DH13]{dummithablicsek2013}
Evan~P. Dummit and Márton Hablicsek.
\newblock Kakeya sets over non-archimedean local rings.
\newblock {\em Mathematika}, 59(2):257–266, 2013.

\bibitem[Dha21]{DharGeneral}
Manik Dhar.
\newblock The kakeya set conjecture for $\mathbb{Z}/\text{{N}}\mathbb{Z}$ for
  general {N}.
\newblock arXiv preprint 2110.14889, 2021.

\bibitem[DKSS13]{DKSS13}
Zeev Dvir, Swastik Kopparty, Shubhangi Saraf, and Madhu Sudan.
\newblock Extensions to the method of multiplicities, with applications to
  {K}akeya sets and mergers.
\newblock {\em SIAM Journal on Computing}, 42(6):2305--2328, 2013.

\bibitem[Dvi09]{dvir2009size}
Zeev Dvir.
\newblock On the size of {K}akeya sets in finite fields.
\newblock {\em Journal of the American Mathematical Society}, 22(4):1093--1097,
  2009.

\bibitem[EOT10]{ellenberg2010kakeya}
Jordan~S Ellenberg, Richard Oberlin, and Terence Tao.
\newblock The {K}akeya set and maximal conjectures for algebraic varieties over
  finite fields.
\newblock {\em Mathematika}, 56(1):1--25, 2010.

\bibitem[Fra16]{Fraser_2016}
Robert Fraser.
\newblock Kakeya-type sets in local fields with finite residue field.
\newblock {\em Mathematika}, 62(2):614--629, Jan 2016.

\bibitem[HW75]{Hardy2008AnIT}
G.H. {Hardy} and E.M. Wright.
\newblock {\em An Introduction to the Theory of Numbers}.
\newblock 4th edition, 1975.

\bibitem[HW18]{hickman2018fourier}
Jonathan Hickman and James Wright.
\newblock The {F}ourier restriction and {K}akeya problems over rings of
  integers modulo {N}.
\newblock {\em Discrete Analysis}, (11), 2018.

\bibitem[Luc78]{Lucas}
Edouard Lucas.
\newblock Théorie des fonctions numériques simplement périodiques.
\newblock {\em American Journal of Mathematics}, 1(4):289--321, 1878.

\bibitem[Sal22]{salvatore2022kakeya}
Alejo Salvatore.
\newblock The kakeya conjecture on local fields of positive characteristic.
\newblock {\em arXiv preprint arXiv:2202.11344}, 2022.

\bibitem[Sch79]{schwartz1979probabilistic}
Jacob~T Schwartz.
\newblock Probabilistic algorithms for verification of polynomial identities.
\newblock In {\em International Symposium on Symbolic and Algebraic
  Manipulation}, pages 200--215. Springer, 1979.

\bibitem[SS08]{saraf2008}
Shubhangi Saraf and Madhu Sudan.
\newblock An improved lower bound on the size of {K}akeya sets over finite
  fields.
\newblock {\em Anal. PDE}, 1(3):375--379, 2008.

\bibitem[Wol99]{wolf1999}
Thomas Wolff.
\newblock Recent work connected with the {K}akeya problem.
\newblock {\em Prospects in mathematics (Princeton,NJ, 1996)}, pages 29--162,
  1999.

\bibitem[Zip79]{ZippelPaper}
Richard Zippel.
\newblock Probabilistic algorithms for sparse polynomials.
\newblock In Edward~W. Ng, editor, {\em Symbolic and Algebraic Computation},
  pages 216--226, Berlin, Heidelberg, 1979. Springer Berlin Heidelberg.

\end{thebibliography}
\appendix

\section{Proof of Lemma~\ref{lem-goodmono}}
The proof here only involves a minor adaptation of the ideas in \cite{dhar2022linear}.

First, we state the multiplicity version of the  Schwartz-Zippel bound~\cite{schwartz1979probabilistic,ZippelPaper} 
(see \cite{DKSS13} for a proof). We denote by $\F[x_1,..,x_n]_{\le d}$ the space of polynomials of total degree at most $d$ with coefficients in $\F$.

\begin{lem}[Schwartz-Zippel with multiplicities]\label{multSchwartz}
Let $\F$ be a field, $d\in \Z_{\geq 0}$ and let $f\in \F[x_1,..,x_n]_{\le d}$ be a non-zero polynomial. Then, for any finite subset $U\subseteq \F$ ,
{\em $$\sum\limits_{a\in U^n} \textsf{mult}(f,a) \le d|U|^{n-1}.$$}
\end{lem}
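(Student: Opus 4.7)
The plan is to prove the statement by induction on the number of variables $n$. The base case $n=1$ is immediate: a nonzero polynomial $f\in\F[x]$ of degree at most $d$ satisfies $\sum_{a\in\F}\textsf{mult}(f,a)\le\deg f\le d$, since the sum of the orders of its roots is bounded by its degree, and in particular $\sum_{a\in U}\textsf{mult}(f,a)\le d$.

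For the inductive step I would strip off from $f$ all ``vertical'' factors of the form $(x_1-a_1)^{e_{a_1}}$ along which $f$ vanishes identically. Concretely, for each $a_1\in U$ let $e_{a_1}$ be the largest integer such that $(x_1-a_1)^{e_{a_1}}$ divides $f$, set $E=\sum_{a_1\in U}e_{a_1}$, and write
$$f \;=\; \prod_{a_1\in U}(x_1-a_1)^{e_{a_1}}\cdot \tilde f,$$
so that by maximality $\tilde f|_{x_1=a_1}\not\equiv 0$ for every $a_1\in U$. Observe that $E\le\deg_{x_1} f\le d$ and $\deg \tilde f\le d-E$.

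The next step is to establish the pointwise inequality
$$\textsf{mult}(f,(a_1,a'))\;\le\; e_{a_1}\;+\;\textsf{mult}\bigl(\tilde f|_{x_1=a_1},\,a'\bigr)$$
for every $(a_1,a')\in U^n$. This combines two standard facts: additivity of multiplicity under products (so the factor $(x_1-a_1)^{e_{a_1}}$ contributes exactly $e_{a_1}$ at $(a_1,a')$, while each $(x_1-b)^{e_b}$ with $b\ne a_1$ is nonvanishing at $(a_1,a')$ and contributes $0$), together with the restriction inequality $\textsf{mult}(\tilde f,(a_1,a'))\le \textsf{mult}(\tilde f|_{x_1=a_1},a')$, which applies precisely because $\tilde f|_{x_1=a_1}\not\equiv 0$ by our factoring choice.

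Summing this pointwise bound over $a\in U^n$ and applying the inductive hypothesis to each nonzero polynomial $\tilde f|_{x_1=a_1}$ in $n-1$ variables of degree at most $d-E$ yields
$$\sum_{a\in U^n}\textsf{mult}(f,a)\;\le\;|U|^{n-1}\sum_{a_1\in U}e_{a_1}\;+\;|U|\cdot(d-E)|U|^{n-2}\;\le\;E\,|U|^{n-1}+(d-E)|U|^{n-1}\;=\;d\,|U|^{n-1}.$$
The main subtlety, and the reason a naive restrict-and-induct argument fails, is the case in which $f|_{x_1=a_1}\equiv 0$ for some $a_1\in U$; extracting \emph{all} such vertical factors at once is what forces the residual polynomial $\tilde f$ to have nonzero restrictions at each $a_1\in U$, so both the multiplicative property of $\textsf{mult}$ and the inductive hypothesis can be applied without any slack.
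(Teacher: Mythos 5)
Your proof is correct, but it is worth noting that the paper does not prove this lemma at all: it is quoted from \cite{DKSS13}, and the argument there runs a different induction. In \cite{DKSS13} one writes $f=\sum_{j\le t} f_j(x_1,\dots,x_{n-1})x_n^j$ with $f_t\neq 0$, picks for each $a'\in U^{n-1}$ a Hasse derivative of weight $m_{a'}=\textsf{mult}(f_t,a')$ that does not kill the top coefficient, and thereby gets a nonzero univariate polynomial of degree at most $t$ whose root multiplicities control $\sum_{a_n\in U}\textsf{mult}(f,(a',a_n))\le |U|\,m_{a'}+t$; summing and applying induction to $f_t$ (degree $\le d-t$) gives the bound. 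Your route handles the same obstruction (restrictions $f|_{x_1=a_1}$ that vanish identically) by a different decomposition: you extract all vertical factors $(x_1-a_1)^{e_{a_1}}$ at once, use additivity of multiplicity under products (valid over a field since the lowest-degree homogeneous parts multiply to something nonzero) to get $\textsf{mult}(f,(a_1,a'))=e_{a_1}+\textsf{mult}(\tilde f,(a_1,a'))$, and then use the restriction inequality, which is exactly the paper's Lemma~\ref{lem:multComp} applied to the substitution $x_1\mapsto a_1$, before inducting on $\tilde f|_{x_1=a_1}$ of degree at most $d-E$. The bookkeeping $E|U|^{n-1}+(d-E)|U|^{n-1}=d|U|^{n-1}$ is right, and the edge cases (base case $n=1$, nonzeroness of $\tilde f$ and of its restrictions, $E\le d$) are all in order. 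What your version buys is that it avoids the Hasse-derivative trick entirely and only needs two standard facts (multiplicativity of $\textsf{mult}$ and the composition lemma); what the \cite{DKSS13} version buys is that it never needs to factor the polynomial, which is the form that generalizes more readily in that paper's other applications. Either argument is acceptable here, since the lemma is only used as a black box for Lemma~\ref{lem-goodmono}.
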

% We now define a family of linear maps sending a polynomial to a list of its evaluations (with derivatives) over some set. These maps (or more precisely, the matrices representing them) will be important to implement our modified version of the polynomial method.

\subsection{The EVAL matrix, its submatrices and their ranks}
Our main object of interest is the matrix encoding the evaluation of a subset of monomials (with their derivatives) on a subset of points.

\begin{define}[$\EVAL^m(S,W)$ matrix]
Let $\F$ be a field, and let $n,m \in \mathbb{N}$. Given a set $S \subset \F^n$ and a set of monomials $W\subset \F[x_1,\hdots,x_n]$, we let {\em $\EVAL^m(S,W)$} denote an  $|S|\binom{m-1+n}{n}\times |W|$ matrix whose columns are indexed by $W$ and rows are indexed by tuples $(x,\mathbf{j})\in S\times \Z_{\ge 0}^n$ s.t. $\text{wt}(\mathbf{j})<m$.
The $((x,\mathbf{j}),f)$th entry of this matrix is,
$$f^{(\mathbf{j})}(x).$$
In other words, the $(x,\mathbf{j})$th row of the matrix consists of the evaluation of the $\mathbf{j}$'th Hasse derivative of all $f\in W$ at $x$. Equivalently, the $f$'th column of the matrix consists of the evaluations of weight strictly less than $m$ Hasse derivatives of $f$ at all points in $S$.
\end{define}

Let $W_{d,n}$ denote the set of monomials in $n$-variables $x_1,\hdots,x_n$ of degree at exactly $d$. Our first lemma shows that the $\F_q$-rank of $\EVAL^m(\P \F_q^{n-1},W_{d,n})$ is maximal whenever $d$ is not too large. This is essentially the Schwartz-Zippel lemma since it means that a polynomial of bounded degree could be recovered from its evaluations (up to high enough order) on a product set and evaluations of homogenous polynomials over $\P\F_q^{n-1}$ gives us evaluations over the entirety of $\F_q^n$.

\begin{lem}[Rank of $\EVAL^m(\P \F_q^{n-1},W_{d,n})$]
Let $m\in \N$ then for all $d<mq$ we have,
{\em $$\text{rank}_{\F_q} \EVAL^m(\P \F_q^{n-1},W_{d,n})=|W_{d,n}|= \binom{d+n-1}{n-1}.$$}
\end{lem}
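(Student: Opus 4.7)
The goal is to show that the $|W_{d,n}|$ columns of $\EVAL^m(\P\F_q^{n-1},W_{d,n})$ are $\F_q$-linearly independent. Suppose for contradiction that some nonzero $\F_q$-linear combination of the degree-$d$ monomials yields a polynomial $P$ producing the zero column; by construction $P$ is a nonzero homogeneous polynomial of total degree exactly $d$ satisfying $P^{(\mathbf{j})}(x)=0$ for every chosen representative $x\in\F_q^n$ of a point in $\P\F_q^{n-1}$ and every $\mathbf{j}\in\Z_{\geq 0}^n$ with $\text{wt}(\mathbf{j})<m$.

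The next step is to bootstrap this vanishing from the chosen projective representatives up to all of $\F_q^n\setminus\{0\}$ by exploiting homogeneity. Writing $P(\lambda x+z)=\lambda^d P(x+z/\lambda)$ for $\lambda\in\F_q^{\times}$ and expanding both sides in $z$ yields the identity
$$P^{(\mathbf{j})}(\lambda x)=\lambda^{d-\text{wt}(\mathbf{j})}P^{(\mathbf{j})}(x).$$
Since every nonzero vector in $\F_q^n$ is a scalar multiple of some chosen projective representative, this upgrades the vanishing to $\textsf{mult}(P,a)\geq m$ for every $a\in\F_q^n\setminus\{0\}$. Moreover, because $P$ is homogeneous of degree $d$, each Hasse derivative $P^{(\mathbf{j})}$ with $\text{wt}(\mathbf{j})<d$ is itself homogeneous of positive degree and therefore vanishes at the origin, giving $\textsf{mult}(P,0)\geq d$.

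Finally, I would apply Lemma~\ref{multSchwartz} with $U=\F_q$ to the nonzero polynomial $P\in\F_q[x_1,\ldots,x_n]_{\leq d}$:
$$m(q^n-1)+d\;\leq\;\sum_{a\in\F_q^n}\textsf{mult}(P,a)\;\leq\;d\,q^{n-1},$$
which rearranges to $m(q^n-1)\leq d(q^{n-1}-1)$. For $n\geq 2$ one computes
$$\frac{q^n-1}{q^{n-1}-1}=q+\frac{q-1}{q^{n-1}-1}>q,$$
so this forces $d>mq$, contradicting the hypothesis $d<mq$; the case $n=1$ is trivial because $|W_{d,1}|=1$ and the single column is nonzero at any nonzero representative. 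The main (quantitative) subtlety is that using only the vanishing on $\F_q^n\setminus\{0\}$ gives the bound $m(q^n-1)\leq dq^{n-1}$, which is \emph{not} sharp enough to contradict $d<mq$; the extra contribution $\textsf{mult}(P,0)\geq d$ coming from homogeneity at the origin is precisely what makes the inequality tight enough to close the argument.
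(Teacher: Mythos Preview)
Your proof is correct and follows the same route as the paper: assume a nonzero $\F_q$-linear dependence among the columns, obtain a nonzero homogeneous $P$ of degree $d$ vanishing to order $m$ on projective representatives, use homogeneity to extend this vanishing to all of $\F_q^n$, and derive a contradiction from the multiplicity Schwartz--Zippel lemma. The paper's version is terser---it simply asserts that homogeneity gives multiplicity $\ge m$ on all of $\F_q^n$ and invokes Lemma~\ref{multSchwartz}---whereas you treat the origin separately with $\textsf{mult}(P,0)\ge d$ and carry out the arithmetic $m(q^n-1)\le d(q^{n-1}-1)$ explicitly; this extra care is harmless (and arguably more precise, since at the origin a homogeneous polynomial of degree $d<m$ has multiplicity exactly $d$, though in that regime any single multiplicity-$m$ vanishing already forces $P=0$).
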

\begin{proof}
 Let $v_{m}$ be the column of $\EVAL^m(\P \F_q^{n-1},W_{d,n})$ corresponding to the monomial $m\in W_{d,n}$. The linear combination $\alpha_m v_m$ is precisely the evaluation of Hasse derivative of weight strictly less than $m$ of the homogenous polynomial $\alpha_m m$ over $\P \F_q^{n-1}$. If such a linear combination is zero then we get a homogenous linear polynomial of degree $d<mq$ which vanishes on the entirety of $\P \F_q^{n-1}$ with multiplicity at least $m$. As the polynomial is homogenous the same holds over the entirety of $\F_q^n$. This immediately contradicts Lemma~\ref{multSchwartz}.
\end{proof}

Our final lemma, which is the heart  of this section, shows that if we have a set $S$ of a $\delta$ fraction of points from $\P \F_q^{n-1}$ then $\text{rank}_{\F_q} \EVAL^m(S,W_{d,n})$ is at least a $\delta$ fraction of $\text{rank}_{\F_q} \EVAL^m(\P \F_q^{n-1},W_{d,n})$

\begin{lem}[Rank of $\EVAL^m(S,W_{d,n})$]
Let $m\in \N$ and $S\subseteq \P \F_q^{n-1}$ with $|S|\ge \delta |\P \F_q^{n-1}|,\delta\in [0,1]$ then for all $d<mq$ we have,
{\em $$\text{rank}_{\F_q} \EVAL^m(S,W_{d,n})\ge \delta  \cdot|W_{d,n}|=\delta \binom{d+n}{n-1}.$$}
\end{lem}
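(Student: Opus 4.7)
My plan is to establish this rank lower bound by combining the multiplicity Schwartz-Zippel lemma (Lemma~\ref{multSchwartz}) with a probabilistic averaging argument, following the strategy of \cite{dhar2022linear}.

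First, I will reformulate the rank bound as a codimension statement. By the preceding lemma in the appendix, the full evaluation matrix $\EVAL^m(\P \F_q^{n-1}, W_{d,n})$ has full column rank $|W_{d,n}|$ for $d < mq$, via multiplicity Schwartz-Zippel applied to non-zero homogeneous polynomials of degree $d$. Therefore $\rank_{\F_q}(\EVAL^m(S, W_{d,n})) = |W_{d,n}| - \dim V(S)$, where $V(S) \subseteq W_{d,n}^{\F_q}$ denotes the subspace of homogeneous degree-$d$ polynomials that vanish on every point of $S$ with multiplicity at least $m$. The lemma thus reduces to proving $\dim V(S) \le (1-\delta) |W_{d,n}|$ whenever $|S| \ge \delta |\P \F_q^{n-1}|$.

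Next, I plan to establish this codimension bound by a random-ordering argument. Consider a uniformly random permutation of $\P \F_q^{n-1}$ as $y_1, \ldots, y_R$ (where $R = |\P \F_q^{n-1}|$) and set $f(k) = \rank_{\F_q}(\EVAL^m(\{y_1, \ldots, y_k\}, W_{d,n}))$. This is a matroid rank function, hence monotone and submodular in $k$, with $f(0) = 0$ and $f(R) = |W_{d,n}|$. By the diminishing-returns property, the expected marginal gains $\E[f(k) - f(k-1)]$ are non-increasing in $k$, so partial sums satisfy $\E[f(k)] \ge k |W_{d,n}| / R$ for every $k \le R$. Using the transitive action of $\GL_n(\F_q)$ on $\P \F_q^{n-1}$, which preserves the rank function under relabeling, this yields the codimension bound on average over random $k$-subsets.

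The main obstacle will be converting this averaged bound into a universal statement, since $\GL_n(\F_q)$ does not act transitively on subsets of a fixed size and individual $S$ may yield rank strictly below the average. To overcome this, I will directly bound $\dim V(S)$ using the multiplicity Schwartz-Zippel estimate $|Z_m(P)| \le d(q^{n-1}-1)/(m(q-1))$ on the vanishing locus $Z_m(P) = \{x \in \P \F_q^{n-1} : \textsf{mult}(P, x) \ge m\}$ of each non-zero $P \in V(S)$. For any such $P$, the inclusion $S \subseteq Z_m(P)$ combined with this Schwartz-Zippel bound constrains the structure of $V(S)$; applying this to each element of a basis of $V(S)$ and performing a union-bound/volume-counting argument over these vanishing loci, one can show that a codimension of $V(S)$ smaller than $\delta |W_{d,n}|$ would be inconsistent with $|S| \ge \delta R$. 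This final step is the delicate technical core: the naive one-polynomial Schwartz-Zippel bound only produces an abrupt transition near $|S| \approx (d/mq) R$, so the argument must amortize vanishing contributions across a basis of $V(S)$, which is precisely where the probabilistic adaptation from \cite{dhar2022linear} is needed.
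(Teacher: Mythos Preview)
Your proposal contains the right ingredients but fails to assemble them, and the final paragraph is not a proof. You explicitly note that the $\GL_n(\F_q)$ action ``preserves the rank function under relabeling,'' yet you then average over \emph{all} random $k$-subsets and declare the passage from average to worst case the main obstacle. That obstacle is self-inflicted: once you know $\rank_{\F_q}\EVAL^m(S,W_{d,n})=\rank_{\F_q}\EVAL^m(M\cdot S,W_{d,n})$ for every $M\in\GL_n(\F_q)$, the rank is \emph{constant} on the orbit of $S$, so any expectation over random $M$ is already the value for $S$. There is no average-to-universal conversion needed.

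Concretely, the paper fixes a set $R$ of $|W_{d,n}|$ linearly independent rows of the full matrix $\EVAL^m(\P\F_q^{n-1},W_{d,n})$ (such rows exist by the preceding lemma). Each row is indexed by some $(x,\bi)$ with $x\in\P\F_q^{n-1}$; it lies in $\EVAL^m(M\cdot S,W_{d,n})$ iff $x\in M\cdot S$, which for uniformly random $M$ happens with probability $|S|/|\P\F_q^{n-1}|\ge\delta$ by transitivity on points. Linearity of expectation gives that, on average, at least $\delta|W_{d,n}|$ of the rows of $R$ land in $\EVAL^m(M\cdot S,W_{d,n})$; hence some $M$ achieves this, and by $\GL$-invariance so does $S$. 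Your submodularity argument over random permutations and your proposed ``amortize Schwartz--Zippel across a basis of $V(S)$'' step are both unnecessary, and the latter is not actually carried out: the single-polynomial bound $|Z_m(P)|\le d/(m(q-1))\cdot|\P\F_q^{n-1}|$ gives no leverage on $\dim V(S)$ when $|S|$ is below that threshold, and you give no mechanism for combining bounds across basis elements.
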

We note that Lemma~\ref{lem-goodmono} is a simple corollary of the above.
\begin{proof}
    Consider $S\subseteq \P \F_q^{n-1}$ such that $|S|=\delta |\P \F_q^{n-1}|$. For any $M\in \text{GL}_n(\F_q)$ let $M\cdot S=\{M\cdot y|y\in S\}$.
    
    \begin{claim}
    {\em $$\rank_{\F_q}\EVAL^m(S,W_{d,n})=\rank_{\F_q}\EVAL^m(M\cdot S,W_{d,n}).$$}
    \end{claim}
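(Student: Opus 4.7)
The plan is to reinterpret the EVAL matrix as the matrix of a linear map out of the space $V=\text{span}_{\F_q}(W_{d,n})$ of homogeneous polynomials of degree $d$ in $n$ variables, and then exhibit enough $\GL_n(\F_q)$-equivariance to transport the rank from $S$ to $M\cdot S$. Concretely, for a finite $T\subseteq \F_q^n$ I would consider the linear map
\[
E_{T,m}\colon V\to \F_q^{|T|\binom{m-1+n}{n}},\qquad f\mapsto \bigl(f^{(\bj)}(y)\bigr)_{y\in T,\ \text{wt}(\bj)<m}.
\]
Since $W_{d,n}$ is a basis of $V$, the columns of $\EVAL^m(T,W_{d,n})$ are exactly the images $E_{T,m}(f)$ for $f\in W_{d,n}$, so $\rank_{\F_q}\EVAL^m(T,W_{d,n})=\rank(E_{T,m})$ and the claim reduces to showing $\rank(E_{S,m})=\rank(E_{M\cdot S,m})$.

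Next I would introduce the substitution isomorphism $\phi_M\colon V\to V$ defined by $(\phi_M f)(x)=f(Mx)$ and compare $E_{S,m}\circ\phi_M$ with $E_{M\cdot S,m}$. Expanding
\[
(\phi_M f)(x+z)=f(Mx+Mz)=\sum_{\bj} f^{(\bj)}(Mx)\,(Mz)^{\bj}
\]
and collecting the coefficient of $z^{\mathbf{k}}$ gives
\[
(\phi_M f)^{(\mathbf{k})}(x)=\sum_{\bj} c_{\bj,\mathbf{k}}(M)\, f^{(\bj)}(Mx),
\]
where $c_{\bj,\mathbf{k}}(M)$ is the coefficient of $z^{\mathbf{k}}$ in $(Mz)^{\bj}$. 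Since $(Mz)^{\bj}$ is homogeneous of degree $\text{wt}(\bj)$ in $z$, this coefficient vanishes unless $\text{wt}(\bj)=\text{wt}(\mathbf{k})$. Thus, after identifying $M\cdot S$ with $S$ via $y'=My$, there is a linear map $T_M$ on the row-index space $\{(y,\mathbf{k}):y\in S,\ \text{wt}(\mathbf{k})<m\}$, block-diagonal in $y$ and in $\text{wt}(\mathbf{k})$, satisfying $E_{S,m}\circ\phi_M=T_M\circ E_{M\cdot S,m}$.

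To close, I would observe that each weight-$r$ block of $T_M$ is precisely the matrix of the substitution $z\mapsto Mz$ acting on the space of homogeneous polynomials of degree $r$ in $z_1,\dots,z_n$, which is invertible because $M\in\GL_n(\F_q)$ is. Hence $T_M$ is invertible, and since $\phi_M$ is an isomorphism of $V$ we obtain
\[
\rank(E_{M\cdot S,m})=\rank\bigl(T_M\circ E_{M\cdot S,m}\bigr)=\rank\bigl(E_{S,m}\circ\phi_M\bigr)=\rank(E_{S,m}),
\]
which is the claim. The only genuine obstacle is the bookkeeping: one must unpack the Hasse-derivative substitution rule carefully enough to see that $T_M$ respects weights and is block-diagonal, but no new ingredient beyond the standard chain rule for Hasse derivatives under affine substitution is required.
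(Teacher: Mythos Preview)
Your argument is correct and rests on the same $\GL_n(\F_q)$-equivariance as the paper's proof, but the packaging differs. The paper builds an isomorphism directly between the two column spaces: it sends the evaluation vector of $f$ on $S$ to the evaluation vector of $f(M^{-1}x)$ on $M\cdot S$, and checks well-definedness by observing that if $f-g$ vanishes to order $m$ on $S$ then $(f-g)(M^{-1}x)$ vanishes to order $m$ on $M\cdot S$ (multiplicity is preserved under composition with an invertible linear map). You instead factor the evaluation maps themselves as $E_{S,m}\circ\phi_M=T_M\circ E_{M\cdot S,m}$, exhibiting the row-side isomorphism $T_M$ explicitly from the Hasse-derivative chain rule for the substitution $z\mapsto Mz$. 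Your route avoids the well-definedness check and makes the weight-preservation transparent; the paper's route is shorter to state once one is willing to invoke the multiplicity-under-composition lemma. Both use nothing beyond the standard behavior of Hasse derivatives under linear change of variables.
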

    \begin{proof}
        We will prove this statement by constructing an isomorphism between the column-space of the two matrices. An element in the column space of $\EVAL^m(S,W_{d,n})$ is the evaluation of the weight strictly less than $m$ Hasse derivatives on $S$ of a degree $d$ homogenous polynomial $f(x)\in \F_q[x_1,\hdots,x_n]$. We map such a vector to the evaluation of the weight strictly less than $m$ Hasse derivatives on $M\cdot S$ of the polynomial $f(M^{-1}x)$ which will also be homogenous of degree $d$. The choice of $f$ in the beginning can be ambiguous but if there are two polynomials $f(x)$ and $g(x)$ having the same evaluation of weight strictly less than $m$ Hasse derivatives over $S$ then $f(x)-g(x)$ vanishes on $S$ with multiplicity at least $m$. By Lemma~\ref{lem:chainRule}, $f(M^{-1}x)-g(M^{-1}x)$ vanishes on $M\cdot S$ with multiplicity $m$ which implies $f(M^{-1}x)$ and $g(M^{-1}x)$ evaluate to the same weight strictly less than $m$ Hasse derivatives over $M\cdot S$.
        The inverse map can be similarly constructed.
    \end{proof}
    
    The above claim shows it suffices to show the rank bound for any $M\cdot S$ where $M\in \text{GL}_n(\F_q)$. We do this by a probabilistic method argument.
    
    The previous Lemma implies that $\EVAL^m(\P \F_q^{n-1},W_{d,n})$ has $\F_q$-rank $|W_{d,n}|$. As this is a matrix with $\F_q$ entries this means that there exists a $|W_{d,n}|=\binom{d+n-1}{n-1}$ subset of rows $R$ of $\EVAL^m(\P \F_q^{n-1},W_{d,n})$ which are linearly independent.
    
    These rows are indexed by tuples 
    $$(x,\bi)\in \P \F_q^{n-1}\times \Z_{\ge 0}^n$$
    with $\text{wt}(\bi)<m$. The $(x,\bi)$th row is the evaluation of the $\bi$th Hasse Derivative at $x$ of the monomials in $W_{d,n}$.
    
    We pick an $M\in \text{GL}_n(\F_q)$ uniformly at random. We now calculate the expected fraction of the rows from $R$ which appear in $\EVAL^m(M\cdot S, W_{d,n})$. 
    
    A row in $R$ indexed by $(x,\bi)\in \P \F_q^{n-1}\times \Z_{\ge 0}^n$ will appear in $\EVAL^m(M\cdot S, W_{d,n})$ if and only if $x\in M\cdot S$. As the action of $\text{GL}_n(\F_q)$ on $\P\F_q^{n-1}$ is transitive we see that this happens with probability at least $\delta$. This means the expected fraction of rows in $R$ appearing in $\EVAL^m(M\cdot S, W_{d,n})$ is at least $\delta$. This ensures that there is some matrix $M$ such that $\EVAL^m(M\cdot S, W_{d,n})$ and hence $\EVAL^m(M\cdot S, W_{d,n})$ has $\F_q$-rank at least $\delta|W_{d,n}|$. 
\end{proof}

\section{Proof of Lemma \ref{lem:1rank}}\label{ap:rk}

As we need a small modification, we reproduce the analysis of \cite{DharGeneral, arsovski2021padic} here. We first show $M_{\ell,1}$ has an explicit decomposition as a product of lower and upper triangular matrices.

\begin{lem}[Lemma 5 in \cite{arsovski2021padic}]\label{lem:diag}
Let $V_m$ for $m\in \N$ be an $m\times m$ matrix whose row and columns are labelled by elements in $\{0,\hdots,m-1\}$ such that its $i,j$th entry is $z^{ij}\in \Z[z]$. 

In this setting, there exists a lower triangular matrix $L_{m}$ over $\Z[z]$ with ones on the diagonal such that its inverse is also lower triangular with entries in $\Z[z]$ with ones on the diagonal, and an upper triangular matrix $D_{m}$ over $\Z[z]$ whose rows and columns are indexed by points in $\{0,\hdots,m-1\}$ such that the $j$th diagonal entry for $j\in \{0,\hdots,m-1\}$ equals

 \begin{align*}
 D_{m}(j,j)=      \prod_{i=0}^{j-1} (z^j-z^i)
 \end{align*}
 such that $V_{m}=L_{m}D_{m}$.
\end{lem}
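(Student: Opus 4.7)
The plan is to obtain the $LU$-decomposition of $V_m$ via a change-of-basis argument using the Newton basis polynomials $p_j(x)=\prod_{k=0}^{j-1}(x-z^k)\in\Z[z][x]$ for $j=0,\dots,m-1$. Since each $p_j$ is monic of degree $j$ with coefficients in $\Z[z]$, expanding it in the monomial basis gives an upper triangular matrix $B\in\Z[z]^{m\times m}$ with unit diagonal. The product $V_mB$ has $(i,j)$ entry $\sum_k z^{ik}b_{kj}=p_j(z^i)$, which vanishes for $i<j$ (since $z^i$ is then a root of $p_j$ by construction) and equals $\prod_{k=0}^{j-1}(z^j-z^k)$ on the diagonal. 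Thus $L':=V_mB$ is a lower triangular matrix whose diagonal already matches the target $D_m(j,j)$.

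Next I would normalize $L'$ to have unit diagonal. Let $\Lambda$ be the diagonal matrix with $\Lambda(j,j)=L'(j,j)$ and set $L_m:=L'\Lambda^{-1}$, so that $L_m(i,j)=L'(i,j)/L'(j,j)$ for $i\ge j$. Factoring out the common powers of $z$ from numerator and denominator gives
\[
L_m(i,j)=\frac{\prod_{k=0}^{j-1}(z^{i-k}-1)}{\prod_{l=1}^{j}(z^l-1)},
\]
which is (up to a unit) the Gaussian $q$-binomial coefficient $\binom{i}{j}_z$. I would then invoke the classical fact that $\binom{i}{j}_z\in\Z[z]$, which may be proved by induction via the $q$-Pascal recurrence $\binom{n+1}{k}_z=\binom{n}{k-1}_z+z^k\binom{n}{k}_z$ starting from $\binom{n}{0}_z=\binom{n}{n}_z=1$. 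This makes $L_m$ lower triangular with unit diagonal and $\Z[z]$ entries; its inverse is then automatically lower triangular with unit diagonal and $\Z[z]$ entries by back-substitution in any commutative ring.

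To assemble the pieces, $V_mB=L'=L_m\Lambda$ gives $V_m=L_m\Lambda B^{-1}$. Set $D_m:=\Lambda B^{-1}$. Since $B$ is upper triangular unit over $\Z[z]$, so is $B^{-1}$ (again by back-substitution), and therefore $D_m$ is upper triangular over $\Z[z]$ with diagonal $\Lambda(j,j)=\prod_{i=0}^{j-1}(z^j-z^i)$, exactly as required. The only nontrivial step in this plan is verifying that the normalized lower-triangular factor has polynomial entries, i.e., the $q$-binomial integrality statement; once that is in hand, the remaining work is routine bookkeeping with triangular factorizations.
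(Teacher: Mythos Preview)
Your proposal is correct. The paper itself does not give a detailed proof here: it simply cites Arsovski and remarks that the decomposition ``is easy to prove by doing elementary row operations on the Vandermonde matrix $V_m$.'' Your Newton-basis argument is a clean, explicit way to carry out exactly this reduction (packaged as column operations rather than row operations), and it makes the one nontrivial point---that the normalized lower-triangular factor has entries in $\Z[z]$---precise via the classical integrality of the $q$-binomial coefficients $\binom{i}{j}_z$. The remaining steps (unit-triangular matrices over a commutative ring have unit-triangular inverses by back-substitution, and $D_m=\Lambda B^{-1}$ is upper triangular with the claimed diagonal) are routine, as you note. So your argument fleshes out what the paper only sketches, by essentially the same mechanism.
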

This statement is precisely Lemma 5 in \cite{arsovski2021padic}. It is easy to prove by doing elementary row operations on the Vandermonde matrix $V_{m}$.

We will also need Lucas's theorem from \cite{Lucas}.
\begin{thm}[Lucas's Theorem~\cite{Lucas}]\label{thm:luc}
Let $p$ be a prime and Given two natural numbers $a$ and $b$ with expansion $a_k p^k+\hdots+a_1p+a_0$ and  
$b_kp^k+\hdots+b_0$ in base $p$ we have,
\em{$$\binom{a}{b} \text{(mod }p\text{)}=\prod\limits_{i=0}^k\binom{a_i}{b_i} \text{(mod }p\text{)} .$$}
A particular consequence is that $\binom{a}{b}$ is non-zero if and only if every digit in base-$p$ of $b$ is at most as large as every digit in base-$p$ of $a$.
\end{thm}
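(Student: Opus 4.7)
The plan is to prove Lucas's theorem via a generating-function identity in the polynomial ring $\F_p[x]$, which is the most direct route and avoids any combinatorial case analysis. The key observation is the Freshman's dream in characteristic $p$: for any $u,v \in \F_p[x]$ we have $(u+v)^p = u^p + v^p$, and iterating gives $(1+x)^{p^i} \equiv 1 + x^{p^i} \pmod{p}$ for every $i \ge 0$.

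First I would write $a$ in its base-$p$ expansion $a = \sum_{i=0}^k a_i p^i$ and compute, working in $\F_p[x]$,
$$(1+x)^a = \prod_{i=0}^{k} \bigl((1+x)^{p^i}\bigr)^{a_i} \equiv \prod_{i=0}^{k} (1 + x^{p^i})^{a_i} \pmod{p}.$$
Next I would extract the coefficient of $x^b$ on both sides. On the left-hand side this is $\binom{a}{b} \pmod{p}$ by the ordinary binomial theorem. On the right-hand side, the ordinary binomial theorem applied to each factor gives
$$\prod_{i=0}^{k} (1 + x^{p^i})^{a_i} = \prod_{i=0}^{k} \sum_{j_i=0}^{a_i} \binom{a_i}{j_i} x^{j_i p^i},$$
so the coefficient of $x^b$ is $\sum \prod_i \binom{a_i}{j_i}$ summed over all tuples $(j_0,\dots,j_k)$ with $0 \le j_i \le a_i \le p-1$ and $\sum_i j_i p^i = b$.

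The crux of the argument is that uniqueness of base-$p$ representation forces $j_i = b_i$ in that sum: since each $j_i$ lies in $\{0,\dots,p-1\}$, the equation $\sum j_i p^i = b = \sum b_i p^i$ admits only one solution. Thus the coefficient of $x^b$ on the right-hand side collapses to the single term $\prod_{i} \binom{a_i}{b_i}$, and comparing with the left-hand side yields Lucas's congruence. The case where some $b_i > a_i$ is handled uniformly: the factor $\binom{a_i}{b_i}$ vanishes, so the product is $0$, matching the fact that no valid $(j_i)$ tuple with $j_i \le a_i$ reproduces $b$.

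The only potentially tricky point, and the one I would be careful about, is justifying that the tuple $(j_i) = (b_i)$ is the \emph{only} solution of $\sum j_i p^i = b$ in the sum: this is where the base-$p$ hypothesis on the digits of $b$ is used, and I would spell it out as a one-line appeal to uniqueness of base-$p$ expansion (the $j_i$'s all lie in $\{0,\dots,p-1\}$ because $j_i \le a_i \le p-1$). The stated consequence that $\binom{a}{b} \not\equiv 0 \pmod p$ iff $b_i \le a_i$ for all $i$ is then immediate, since $\binom{a_i}{b_i}$ is non-zero in $\F_p$ exactly when $b_i \le a_i$.
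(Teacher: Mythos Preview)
Your proof is correct and is the standard generating-function argument for Lucas's theorem. The paper does not actually supply a proof of this statement; it is quoted with a citation to Lucas's original work and then used as a black box in the proof of Lemma~\ref{lem:1rank}, so there is nothing to compare against.
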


\begin{replem}{lem:1rank}
$m\ge \ell$ then the $\F_p$-rank of $M^\ell_{\ell,n}$ is at least {\em $$\text{rank}_{\F_p} M_{\ell,n} \ge \left\lceil\binom{\ell \lceil\log_p(\ell)\rceil^{-1} +n}{n}\right\rceil.$$}
\end{replem}

\begin{proof}
Using the previous lemma we note that $V_{m}=L_mD_m$. Under the ring map $f$ from $\Z[z]$ to $\Z[z]/\langle (z-1)^\ell\rangle$, $V_{m}=L_{m}D_{m}$ becomes $M^\ell_{m,1}=\overline{L}_{m}\overline{D}_{m}$ where $\overline{L}_{m}$ and $\overline{D}_{m}$ are the matrices $f(L_{\ell})$ and $f(D_{\ell})$ respectively.

We next notice that $M^\ell_{m,n}$ is $M_{\ell,1}$ tensored with itself $n$ times which we denote as $M^\ell_{m,n}=(M^\ell_{m,1})^{\otimes n}$. Using $M^\ell_{m,1}=\overline{L}_{m}\overline{D}_{m}$ and Fact \ref{fact:multOfKronecker} we have $M^\ell_{m,n}=(M^\ell_{m,1})^{\otimes n}=\overline{L}_{m}^{\otimes n}\overline{D}_{m}^{\otimes n}$. As $L_m$ was invertible with its inverse also having entries in $\Z[z]$ we see that $\overline{L}_m$ is also invertible and $\left(\overline{L}_{m}^{\otimes n}\right)^{-1}M^\ell_{m,n}= \overline{D}_{m}^{\otimes n}$. By standard properties of the tensor product have that,
$$\rank_{\F_p} M^\ell_{m,n}\ge \rank_{\F_p} \overline{D}_{m}^{\otimes n}.$$
As $\overline{D}_{m}$ is upper triangular so will $\overline{D}_{m}^{\otimes n}$ be. Therefore, to lower bound the rank of $\overline{D}_{m}^{\otimes n}$ we can lower bound the number of non-zero diagonal elements. 

The diagonal elements in $\overline{D}_{m}^{\otimes n}$ correspond to the product of diagonal elements chosen from $n$ copies of $\overline{D}_{m}$. Recall, the rows and columns of $\overline{D}_{m}$ are labelled by $j\in \{0,\hdots,m-1\}$ with
\begin{align*}
 D_{m}(j,j)=      \prod_{i=0}^{j-1} (z^j-z^i)
 \end{align*}
Setting $z-1=w$ we note that $\F_p[z]/\langle (z-1)^\ell\rangle$ is isomorphic to $\F_p[w]/\langle w^{\ell}\rangle$. $D_{m}(j,j)$ can now be written as
\begin{align*}
 D_{m}(j,j)=(1+w)^{j(j-1)/2} \prod_{i=1}^{j} ((1+w)^i-1)
 \end{align*}
Using Lucas's Theorem (Theorem \ref{thm:luc}) we see that the largest power of $t$ which divides $(1+w)^l-1$ is the same as the largest power of $p$ which divides $l$. For any $j\le \ell-1$ , therefore the largest power of $w$ which divides $\overline{D}_{m}(j,j)$ is at most
\begin{align*}
    \sum\limits_{t=0}^{\lfloor \log_p(j)\rfloor} \left(\left\lfloor \frac{j}{p^t} \right\rfloor-\left\lfloor \frac{j}{p^{t+1}} \right\rfloor\right)p^t&=j+\sum\limits_{t=1}^{\lfloor \log_p(j)\rfloor} \left\lfloor \frac{j}{p^t} \right\rfloor p^{t-1}(p-1)\\
    &\le j(1+\lfloor \log_p(j)\rfloor(1-1/p))\\
    &\le j(\lfloor\log_p(\ell-1)\rfloor+1-\lfloor\log_p(\ell-1)\rfloor/p).\numberthis \label{eq:jBd}
\end{align*}
Consider the set of tuples $(j_1,\hdots, j_n)\in \N$ such that $j_1+\hdots+j_n\le \lceil \ell/(\lfloor\log_p(\ell-1)\rfloor+1) \rceil $. Using \eqref{eq:jBd} we see that the diagonal entry in $\overline{D}_{m}^{\otimes n}$ corresponding to the tuple will be divisible by at most $w^{\lceil \ell/(\lfloor\log_p(\ell-1)\rfloor+1) \rceil (\lfloor\log_p(\ell-1)\rfloor+1-\lfloor\log_p(\ell-1)\rfloor/p)}$. It is easy to check that the exponent is at most $\ell-1$ which will guarantee that the $(j_1,\hdots,j_n)$'th diagonal entry of $\overline{D}_{m}^{\otimes n}$ is non-zero. 

We note as $\lfloor\log_p(\ell-1)\rfloor+1=\lceil\log_p(\ell)\rceil$ we get at least $\lceil\binom{\ell/\lceil \log_p(\ell)\rceil^{-1} +n}{n}\rceil$ non-zero diagonal entries proving the desired rank bound.
\end{proof}

\section{Proof of Lemma~\ref{lem:deRich}}

 We need two simple facts for the proof.

\begin{fact}[Hasse Derivatives of composition of two functions]\label{fact:com}
Let $\F$ be a field, $n\in \N$. Given a tuple of polynomials $C(y)=(C_1(y),C_2(y),\hdots,C_n(y))\in (\F[y])^n$, $w\in \N$ and $\gamma\in \F$ there exists a set of coefficients $b_{w,\balpha}\in \F$ (which depend on $C$ and $\gamma$) where $\balpha \in \Z^n_{\ge 0}$ such that for any $f\in \F[x_1,\hdots,x_n]$ we have,
$$h^{(w)}(\gamma)=\sum\limits_{\text{wt}(\balpha)\le w} b_{w,\balpha} f^{(\balpha)}(C_1(\gamma),\hdots,C_n(\gamma)), $$
where $h(y)=f(C_1(y),\hdots,C_n(y))$. 

\end{fact}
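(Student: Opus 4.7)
The plan is to obtain the claimed identity directly from the generating-function definition of the Hasse derivative. Recall that for any polynomial $g$ in one variable, $g(\gamma+t)=\sum_{w\ge 0}g^{(w)}(\gamma)t^w$, and for $f\in\F[x_1,\ldots,x_n]$ the multivariate analogue reads $f(x+z)=\sum_{\balpha}f^{(\balpha)}(x)z^{\balpha}$. The whole proof is a single substitution followed by reading off the coefficient of $t^w$.

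First I would set $D_i(t)\coloneqq C_i(\gamma+t)-C_i(\gamma)\in \F[t]$ for each $i$, so that $D_i(0)=0$, i.e.\ $t\mid D_i(t)$. Substituting $x=C(\gamma)$ and $z=D(t)=(D_1(t),\ldots,D_n(t))$ into the multivariate Taylor identity gives
\begin{equation*}
h(\gamma+t)=f\bigl(C(\gamma+t)\bigr)=f\bigl(C(\gamma)+D(t)\bigr)=\sum_{\balpha\in\Z^n_{\ge 0}}f^{(\balpha)}\bigl(C(\gamma)\bigr)\,\prod_{i=1}^n D_i(t)^{\alpha_i}.
\end{equation*}
On the other hand $h(\gamma+t)=\sum_{w\ge 0}h^{(w)}(\gamma)t^w$, so comparing the coefficient of $t^w$ yields
\begin{equation*}
h^{(w)}(\gamma)=\sum_{\balpha}\Bigl([t^w]\prod_{i=1}^n D_i(t)^{\alpha_i}\Bigr)\,f^{(\balpha)}\bigl(C(\gamma)\bigr).
\end{equation*}

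Next I would restrict the range of $\balpha$: since $t\mid D_i(t)$, the product $\prod_i D_i(t)^{\alpha_i}$ is divisible by $t^{\text{wt}(\balpha)}$, so its coefficient at $t^w$ is $0$ whenever $\text{wt}(\balpha)>w$. Consequently only terms with $\text{wt}(\balpha)\le w$ survive, and setting
\begin{equation*}
b_{w,\balpha}\;\coloneqq\;[t^w]\prod_{i=1}^n\bigl(C_i(\gamma+t)-C_i(\gamma)\bigr)^{\alpha_i}\in\F
\end{equation*}
— a scalar that depends only on $C$, $\gamma$, $w$, $\balpha$, exactly as required — produces the stated identity.

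There is no real obstacle here; the only minor point to be careful about is that the expansion $f(x+z)=\sum f^{(\balpha)}(x)z^{\balpha}$ is a polynomial identity in the formal variables $z$, so the substitution $z\mapsto D(t)$ is legitimate for any tuple of polynomials $D(t)$, and the resulting equality in $\F[t]$ can be compared coefficient by coefficient. This works uniformly over every field $\F$, matching the generality of the statement.
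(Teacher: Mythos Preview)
Your argument is correct and is precisely the ``follows easily from the definition of the Hasse derivative'' proof the paper alludes to (the paper does not spell it out, merely citing Proposition~6 of \cite{DKSS13}). The explicit formula $b_{w,\balpha}=[t^w]\prod_i\bigl(C_i(\gamma+t)-C_i(\gamma)\bigr)^{\alpha_i}$ you extract is exactly what that definition yields, and your observation that $t\mid D_i(t)$ forces $\text{wt}(\balpha)\le w$ is the only nontrivial point.
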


This fact follows easily from the definition of the Hasse derivative. A proof can also be found in Proposition 6 of \cite{DKSS13}. We also need another fact about the isomorphism between polynomials and the evaluations of their derivatives at a sufficiently large set of points.

\begin{fact}[Computing polynomial coefficients from polynomial evaluations]\label{fact:eval}
Let $\F$ be a field and $n\in \N$. Given distinct $a_i\in \F$ and $\beta_i\in \Z_{\ge 0},i=1,\hdots,n$, let $h(y)=\prod_{i=1}^n (y-a_i)^{\beta_i}\in \F[y]$. We have an isomorphism between
$$\frac{\F[z]}{\left\langle h(z)\right\rangle } \longleftrightarrow \F^{\sum\limits_{i=1}^n \beta_i},$$
which maps every polynomial $f\in \F[z]/\langle h(z)\rangle$ to the evaluations $(f^{(j_i)}(a_i))_{i,j_i}$ where $i\in \{1,\hdots,n\}$ and $j_i\in \{0,\hdots,\beta_i-1\}$. 
\end{fact}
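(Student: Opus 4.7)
The plan is to prove the fact by combining the Chinese Remainder Theorem for $\F[z]$ with a local Taylor-expansion isomorphism at each root $a_i$. Both ingredients are standard, so the proof is essentially an assembly; the main thing to verify carefully is that the Hasse derivative evaluations really do parameterize residues modulo $(z-a_i)^{\beta_i}$.

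First I would observe that because the $a_i$ are distinct elements of $\F$, the polynomials $(z-a_i)^{\beta_i}$ are pairwise coprime in the PID $\F[z]$. The Chinese Remainder Theorem then gives a ring isomorphism
\[
\frac{\F[z]}{\langle h(z)\rangle}\;\cong\;\bigoplus_{i=1}^{n}\frac{\F[z]}{\langle (z-a_i)^{\beta_i}\rangle},
\]
realized by reduction modulo each factor. Next, for a fixed $i$, I would show that the map $f\mapsto (f^{(j)}(a_i))_{j=0}^{\beta_i-1}$ is an $\F$-linear isomorphism from $\F[z]/\langle (z-a_i)^{\beta_i}\rangle$ to $\F^{\beta_i}$. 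This uses the defining identity of Hasse derivatives in one variable, $f(a_i+w)=\sum_{j\ge 0} f^{(j)}(a_i)\,w^{j}$, specialized at $w=z-a_i$, which says the class of $f$ modulo $(z-a_i)^{\beta_i}$ is exactly its truncated Taylor expansion $\sum_{j=0}^{\beta_i-1} f^{(j)}(a_i)(z-a_i)^{j}$. Linearity is clear, injectivity follows because all first $\beta_i$ Taylor coefficients vanishing means $(z-a_i)^{\beta_i}\mid f$, and surjectivity follows from dimension count (or directly since $\{(z-a_i)^j\}_{j<\beta_i}$ is an obvious $\F$-basis).

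Combining the CRT decomposition with the local isomorphisms at each $a_i$ yields the claimed $\F$-vector space isomorphism, and a total dimension check $\sum_{i}\beta_i = \deg h$ confirms consistency. There is no real obstacle here; the only subtlety worth flagging is making sure one uses Hasse derivatives (not ordinary derivatives) so the argument works in arbitrary characteristic, since the binomial factors that would appear with ordinary derivatives may vanish in $\F$ when $\mathrm{char}(\F)$ is small.
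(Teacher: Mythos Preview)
Your proposal is correct and matches the paper's own approach: the paper does not give a detailed proof but states that the fact follows from the equivalence ``first $\beta$ Hasse derivatives at $a$ vanish $\Leftrightarrow$ $(z-a)^\beta \mid f$'' together with the Chinese Remainder Theorem for $\F[y]$, which is exactly the two-step argument you spell out. Your additional remark about why Hasse (rather than ordinary) derivatives are needed in positive characteristic is a useful clarification the paper leaves implicit.
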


This is a simple generalization of the fact that for a univariate polynomial having its first $\beta$ Hasse derivatives at $a$ be $0$ is equivalent to it being divisible by $(z-a)^\beta$. It can be proven in several ways, one way of proving it would be using the previous statement with the Chinese remainder theorem for the ring $\F[y]$. To prove Lemma~\ref{lem:deRich} we will need the following corollary of the fact above.

\begin{cor}[Computing a polynomial from its evaluations]\label{cor:eval}
Let $\F$ be a field and $n\in \N$. Given distinct $a_i\in \F$ and $\beta_i\in \Z_{\ge 0}$, let $h(y)=\prod_{i=1}^n (y-a_i)^{\beta_i}\in \F[y]$. Then there exists coefficients $t_{i,j}\in \F[z]$ (depending on $h$) for $i\in \{1,\hdots,n\},j\in \{0,\hdots,\beta_i-1\}$ such that for any $f(y)\in \F[y]$ we have,
$$\sum\limits_{i=1}^n \sum\limits_{j=0}^{\beta_i-1} t_{i,j} f^{(j)}(a_i)=f(z) \in  \F[z]/\langle h(z)\rangle.$$
\end{cor}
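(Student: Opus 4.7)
The plan is to derive this corollary almost directly from Fact~\ref{fact:eval}. That fact already supplies the key linear isomorphism
\[
\Phi:\F[z]/\langle h(z)\rangle \longrightarrow \F^{\sum_{i=1}^n \beta_i}, \qquad f \longmapsto \bigl(f^{(j)}(a_i)\bigr)_{i\in[n],\,0\le j<\beta_i},
\]
which is $\F$-linear and bijective. The content of the corollary is that $\Phi^{-1}$ can be written out in coordinates with coefficient polynomials that depend only on $h$ (equivalently, only on the data $\{(a_i,\beta_i)\}$), not on the particular input $f$. Once we notice this framing, no additional machinery is needed: the statement is just a rewriting of the inverse of a linear isomorphism of finite-dimensional $\F$-vector spaces.

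Concretely, I would proceed as follows. First, for each pair $(i,j)$ with $i\in\{1,\dots,n\}$ and $j\in\{0,\dots,\beta_i-1\}$, define
\[
t_{i,j}(z) \;:=\; \Phi^{-1}(e_{i,j}) \;\in\; \F[z]/\langle h(z)\rangle,
\]
where $e_{i,j}\in\F^{\sum_i \beta_i}$ is the standard basis vector supported on the coordinate indexed by $(i,j)$. In other words, $t_{i,j}(z)$ is the unique element of $\F[z]/\langle h(z)\rangle$ whose $(i',j')$-th evaluation satisfies $t_{i,j}^{(j')}(a_{i'}) = \delta_{(i,j),(i',j')}$; one may think of these as the Hermite/Lagrange interpolation polynomials at the points $a_i$ with multiplicities $\beta_i$, but for the proof no explicit formula is needed. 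Since $\Phi$ depends only on $h$, so do the $t_{i,j}$.

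Next, for any $f\in\F[y]$, apply $\Phi^{-1}$ to $\Phi(f)=(f^{(j)}(a_i))_{i,j}=\sum_{i,j} f^{(j)}(a_i)\,e_{i,j}$ and use linearity of $\Phi^{-1}$:
\[
f(z) \;=\; \Phi^{-1}\bigl(\Phi(f)\bigr) \;=\; \sum_{i=1}^n \sum_{j=0}^{\beta_i-1} f^{(j)}(a_i)\, t_{i,j}(z) \;\in\; \F[z]/\langle h(z)\rangle,
\]
which is exactly the claimed identity.

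There is essentially no obstacle once Fact~\ref{fact:eval} is in hand; the only thing to verify carefully is that the coefficients $t_{i,j}$ are independent of $f$, and this is clear from the definition $t_{i,j}:=\Phi^{-1}(e_{i,j})$ together with $\F$-linearity of $\Phi^{-1}$. The corollary is really just a restatement of Fact~\ref{fact:eval} that makes the inverse map explicit and packaged in the form needed later for the decoding step in Lemma~\ref{lem:deRich}.
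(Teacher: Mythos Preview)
Your proposal is correct and follows essentially the same approach as the paper: both arguments simply invert the $\F$-linear isomorphism of Fact~\ref{fact:eval} and read off the coefficients $t_{i,j}\in\F[z]/\langle h(z)\rangle$ from the inverse map. The paper phrases this as recovering the monomial-basis coefficients of $f(z)\bmod h(z)$ and then multiplying by the corresponding powers of $z$, while you go directly to the dual basis $t_{i,j}=\Phi^{-1}(e_{i,j})$; these are the same argument.
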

\begin{proof}
Fact \ref{fact:eval} implies that there exists a $\F$-linear map which can compute the coefficients of $1,z,z^2,\hdots, z^{\sum_{i=1}^n \beta_i-1}$ of $f(z) \in \F[z]/\langle h(z)\rangle$ from the evaluations $f^{(j_i)}(a_i)$ for $i\in \{1,\hdots,n\}$ and $j_i\in~\{0,\hdots,\beta_i-1\}$. Multiplying these coefficients with $1,\hdots, z^{\sum_{i=1}^n \beta_i-1}$ computes $f(z)$ in $\F[z]/\langle h(z)\rangle$.
\end{proof}

We are now ready to prove Lemma~\ref{lem:deRich}.

\begin{proof}[Proof of Lemma~\ref{lem:deRich}]
As the statement we are trying to prove is linear over $\Z$ we see that it suffices to prove the lemma in the case of when $f$ equals a monomial. Given $v\in \Z_{\ge 0}^n$ we let $Q_v(x)=Q_v(x_1,\hdots,x_n)=x_1^{v_1}\hdots x_n^{v_n}$ be a monomial in $\F[x_1,\hdots,x_n]$ where $\F$ is an arbitrary field (we will be working with $\F=\Q(\zeta)$ and $\F=\F_p$).
Let $C(y)=y^{u'}=(y^{u'_1},y^{u'_2},\hdots,y^{u'_n})\in (\F[y])^n$ where $u'=(u'_1,\hdots,u'_n)\in \Z^n,u=(u_1,\hdots,u_n)\in \P (\Z/p^k\Z)^{n-1}$ and $u' \text{ (mod }p^k\text{)}=u$. For this proof we use the elements in $ \{0,\hdots, p^k -1\}$ to represent the set $\Z/p^k\Z$.

We first prove the following claim.
\begin{claim}\label{cl:inter}
Let $w\in \N, \lambda \in \Z/p^k\Z$. There exists coefficients $b'_{w,\balpha}(\lambda)\in \Q(\zeta)$ (depending on $w,\lambda$ and $C$) for $\balpha\in \Z_{\ge 0}^n$ with $\text{wt}(\balpha)\le~w$ such that for all monomials $Q_v(x)\in \Z[x_1,\hdots,x_n],v\in\Z_{\ge 0}^n$ we have,
$$\sum\limits_{\text{wt}(\balpha)\le w} b'_{w,\balpha}(\lambda) Q_v^{(\balpha)}(\zeta^{a+\lambda u})=\zeta^{\langle a,v\rangle} (Q_v\circ C)^{(w)}(\zeta^\lambda).$$
\end{claim}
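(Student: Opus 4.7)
The plan is to reduce the claim to a direct application of Fact~\ref{fact:com} (the chain-rule identity for Hasse derivatives), working over the field $\Q(\zeta)$, followed by a short computation that exploits the monomial structure of $Q_v$.

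First I would apply Fact~\ref{fact:com} with $\F=\Q(\zeta)$, the tuple $C(y)=(y^{u'_1},\hdots,y^{u'_n})$, the integer $w$, and $\gamma=\zeta^\lambda$. This produces constants $b_{w,\balpha}\in \Q(\zeta)$ that depend on $w,C,\lambda$ (and $a$ only through having been chosen, not yet used) but crucially \emph{not} on $f$, such that for any polynomial $f$,
$$(f\circ C)^{(w)}(\zeta^\lambda)=\sum_{\text{wt}(\balpha)\le w} b_{w,\balpha}\, f^{(\balpha)}\!\bigl(C_1(\zeta^\lambda),\hdots,C_n(\zeta^\lambda)\bigr)=\sum_{\text{wt}(\balpha)\le w} b_{w,\balpha}\, f^{(\balpha)}(\zeta^{\lambda u'}).$$
Because $\zeta^{p^k}=1$ and $u'\equiv u\pmod{p^k}$, we have $\zeta^{\lambda u'_j}=\zeta^{\lambda u_j}$ for each $j$, so $f^{(\balpha)}(\zeta^{\lambda u'})=f^{(\balpha)}(\zeta^{\lambda u})$. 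Specializing to $f=Q_v$ gives the key identity
$$(Q_v\circ C)^{(w)}(\zeta^\lambda)=\sum_{\text{wt}(\balpha)\le w} b_{w,\balpha}\, Q_v^{(\balpha)}(\zeta^{\lambda u}).$$

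Next I would relate the evaluation at $\zeta^{\lambda u}$ to the evaluation at $\zeta^{a+\lambda u}$ using the monomial structure. Since $Q_v^{(\balpha)}(x)=\prod_j \binom{v_j}{\alpha_j} x_j^{v_j-\alpha_j}$, a direct computation yields
$$Q_v^{(\balpha)}(\zeta^{a+\lambda u})=\zeta^{\langle a,v-\balpha\rangle}\, Q_v^{(\balpha)}(\zeta^{\lambda u}),$$
equivalently $Q_v^{(\balpha)}(\zeta^{\lambda u})=\zeta^{\langle a,\balpha\rangle-\langle a,v\rangle}\, Q_v^{(\balpha)}(\zeta^{a+\lambda u})$. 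Substituting this into the previous identity and multiplying both sides by $\zeta^{\langle a,v\rangle}$ gives
$$\zeta^{\langle a,v\rangle}(Q_v\circ C)^{(w)}(\zeta^\lambda)=\sum_{\text{wt}(\balpha)\le w} b_{w,\balpha}\,\zeta^{\langle a,\balpha\rangle}\, Q_v^{(\balpha)}(\zeta^{a+\lambda u}).$$
Defining $b'_{w,\balpha}(\lambda):=b_{w,\balpha}\,\zeta^{\langle a,\balpha\rangle}\in\Q(\zeta)$ completes the claim: these coefficients lie in $\Q(\zeta)$ and depend only on $w,\lambda,C$ (with $a$ fixed throughout the proof of Lemma~\ref{lem:deRich}), but not on $v$.

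There is no real obstacle in the argument; the only conceptual step is to notice that the chain-rule coefficients from Fact~\ref{fact:com} are independent of $f$, which is exactly what lets us peel off the common phase $\zeta^{\langle a,v\rangle}$ and absorb the remaining $v$-dependence entirely into the factor $Q_v^{(\balpha)}(\zeta^{a+\lambda u})$. Everything else is bookkeeping with monomials and roots of unity.
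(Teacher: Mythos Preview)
Your proposal is correct and follows essentially the same argument as the paper: apply Fact~\ref{fact:com} at $\gamma=\zeta^\lambda$ to obtain $f$-independent coefficients $b_{w,\balpha}$, use the monomial formula $Q_v^{(\balpha)}(x)=\prod_j\binom{v_j}{\alpha_j}x_j^{v_j-\alpha_j}$ to relate evaluations at $\zeta^{\lambda u}$ and $\zeta^{a+\lambda u}$, and set $b'_{w,\balpha}(\lambda)=\zeta^{\langle a,\balpha\rangle}b_{w,\balpha}$. The only cosmetic difference is that the paper keeps $\zeta^{\lambda u'}$ in the intermediate steps while you pass to $\zeta^{\lambda u}$ slightly earlier; this is immaterial.
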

\begin{proof}
For every $\lambda \in \Z/p^k\Z$ and $w\in \N$, using Fact \ref{fact:com} we can find coefficients $b_{w,\balpha}(\lambda)\in \Q(\zeta)$ such that,
\begin{align}\label{eq:Der1}
(G\circ C)^{(w)}(\zeta^{\lambda})= \sum\limits_{\text{wt}(\balpha)\le w} b_{w,\balpha}(\lambda) G^{(\balpha)}(C(\zeta^{\lambda}))= \sum\limits_{\text{wt}(\balpha)\le w} b_{w,\balpha}(\lambda) G^{(\balpha)}(\zeta^{\lambda u'}),
\end{align}
for every polynomial $G\in \Q(\zeta)[x_1,\hdots,x_n]$ where $(G\circ C)(y)=f(y^{u'})\in \Q(\zeta)[y]$. As $u' \text{ (mod }p^k\text{)}=u$ and $\zeta$ is a primitive $p^k$'th root of unity in $\C$ we note that 
$$\zeta^{a+\lambda u}=\zeta^{a+\lambda u'}.$$
We now make the simple observation that for any $\balpha \in \Z_{\ge 0}^n$ and $v\in \Z_{\ge 0}^n$ we have 
$$Q^{(\balpha)}_v(x)=\prod\limits_{i=1}^n\binom{v_i}{\alpha_i} x_i^{v_i-\alpha_i},$$
which implies
$$Q^{(\balpha)}_v(\zeta^{a+\lambda u})=\zeta^{\langle a,v\rangle}\zeta^{-\langle \balpha,a\rangle}Q^{(\balpha)}_v(\zeta^{\lambda u})=\zeta^{\langle a,v\rangle}\zeta^{-\langle \balpha,a\rangle}Q^{(\balpha)}_v(\zeta^{\lambda u'}).$$
The above equation combined with \eqref{eq:Der1} for $G=Q_v$ implies,
\begin{align*}
    \zeta^{\langle a,v\rangle} (Q_v\circ C)^{(w)}(\zeta^\lambda)=\sum\limits_{\text{wt}(\balpha)\le w} \zeta^{\langle \balpha,a\rangle}b_{w,\balpha}(\lambda) Q_v^{(\balpha)}(\zeta^{a+\lambda u}),
\end{align*}
for all $w\in \N$ and $v\in \Z_{\ge 0}^n$. Setting $b'_{w,\balpha}(\lambda)=\zeta^{\langle \balpha,a\rangle}b_{w,\balpha}(\lambda)$ we are done .
\end{proof}

Without loss of generality let us assume $\sum_{x\in L} \pi(x)=\ell$ (if it is greater we can reduce each of the $\pi(x)$ until we reach equality - this would just mean that our computation was done by ignoring some higher order derivatives at some of the points).

Let $h(y)\in \Z(\zeta)[y]\subseteq \Q(\zeta)[y]$ be the polynomial,
$$h(y)= \prod\limits_{\lambda \in \Z/p^k\Z}(y-\zeta^{\lambda})^{\pi(a+\lambda u)}.$$

Using Corollary~\ref{cor:eval} there is a $\Q(\zeta)[z]$-linear combination of the evaluations  $(Q_v\circ C)^{(w)}(\zeta^\lambda)$ for $\lambda \in \Z/p^k\Z$ and $w<\pi(a+\lambda u)$ which can compute the element 
$$(Q_v\circ C)(z)=z^{\langle v,u'\rangle} \in \Q(\zeta)[z]/\langle h(z)\rangle.$$ 
This statement along with Claim~\ref{cl:inter} leads to the following: there exists elements $c_{\lambda,\balpha} \in \Q(\zeta)[z]$ (depending on $\pi,L$ and $u'$) for $\lambda \in \Z/p^k\Z$ and $\balpha\in  \Z_{\ge 0}^n$ with $\text{wt}(\balpha)< \pi(a+\lambda u)$ such that the following holds for all monomials $m_v\in \Z[x_1,\hdots,x_n],v\in \Z_{\ge 0}$ we have,

$$\sum\limits_{\lambda =0}^{p^k-1}\sum\limits_{\text{wt}(\balpha) < \pi(a+\lambda u) }c_{\lambda,\balpha} Q_v^{(\balpha)}(\zeta^{a+\lambda u})=\zeta^{\langle a,v\rangle} (m_v\circ C)(z)=\zeta^{\langle a,v\rangle}z^{\langle v,u'\rangle} \in \Q(\zeta)/\langle h(z)\rangle.$$

We claim that these are coefficients we wanted to construct in the statement of this lemma. 

All we need to show now is that $\psi_{p^k}$ is a ring homomorphism from the ring $\Z(\zeta)/\langle h(z)\rangle$ to the ring $\overline{T}_\ell=\F_p(\zeta)/\langle (z -1)^\ell\rangle$ and maps $\zeta^{\langle a,v\rangle}z^{\langle v,u'\rangle}$ to $z^{\langle v,u'\rangle}$. This follows from Corollary~\ref{cor:psiE} and noting 
$$\psi_{p^k}(h(z))=(z-1)^{\sum\limits_{\lambda \in\Z_{p^k}} \pi(a+\lambda u)}=(z-1)^{\ell}\in \F_p[z].$$

\end{proof}
\end{document}